\setlist{itemsep=.1em}
\newcommand{\nc}{\newcommand}
\nc{\nothing}[1]{} 
\nc{\commenth}[1]{\color{blue}#1 \color{black}}
\nc{\commentblue}[1]{\color{blue}#1 \color{black}}
\nc{\marginparr}[1]{\marginpar{#1}} 
\nc{\monday}[1]{\marginpar{\tiny 09/20 H: #1}} 
\nc{\Miller}{\mathbb M}
\nc{\Laver}{\mathbb L}
\nc{\Landver}{\mathbb D}
\numberwithin{equation}{section}
\newtheorem{theorem}{Theorem}[section]
\newtheorem{lemma}[theorem]{Lemma}
\theoremstyle{definition} 
\newtheorem{definition}[theorem]{Definition}
\newtheorem{problem}[theorem]{Problem}
\theoremstyle{plain} 
\newtheorem{proposition}[theorem]{Proposition}
\newtheorem{corollary}[theorem]{Corollary}
\newtheorem{fact}[theorem]{Fact}
\newtheorem*{maintheorem*}{Main Theorem}
\newtheorem*{conjecture*}{Conjecture}
\newtheorem*{theorem*}{Theorem}
\newtheorem*{proposition*}{Proposition}
\newtheorem*{corollary*}{Corollary}
\newtheorem{remark}[theorem]{Remark} 
\newtheorem{observation}[theorem]{Observation}
\theoremstyle{remark}  
\newtheorem*{remarks*}{Remarks}
\newtheorem*{remark*}{Remark}
\newtheorem*{claim*}{Claim}
\nc{\nco}{\DeclareMathOperator}
\nco{\rk}{rk}
\nco{\ppower}{pp}
\nco{\pcf}{pcf} 
\nco{\tcf}{tcf} 
\nco{\tlim}{tlim} 
\nco{\limtext}{lim} 
\nco{\prodt}{{\textstyle \prod}}
\nco{\symdiff}{\triangle}
\nco{\dom}{dom}
\nco{\card}{card}
\nco{\lh}{lh}
\nco{\lt}{lt}
\nco{\lgg}{lg}
\nco{\hgt}{ht}
\nco{\rge}{range}
\nco{\otp}{otp}
\nco{\trunk}{tr}
\nco{\cf}{cf}
\nco{\cof}{cof}
\nco{\nex}{next}
\nco{\reduction}{red}
\nco{\supt}{supt}
\nco{\supp}{supp}
\nco{\Lim}{Lim}
\nco{\Leb}{Leb}
\nco{\id}{id}
\nco{\ro}{ro}
\nco{\Levy}{Coll}
\nco{\OSucc}{OSucc} 
\nco{\Succ}{Succ}
\nco{\Split}{Split}
\nco{\Stem}{Stem}
\nco{\acc}{acc}
\nc{\MA}{\mbox{\sf MA}}
\nc{\PFA}{\mbox{\sf PFA}}
\nc{\OCA}{\mbox{\sf OCA}}
\nc{\GCH}{\mbox{\sf GCH}}
\nc{\CH}{\mbox{\sf CH}}
\nc{\ZFC}{\mbox{\sf ZFC}}
\nc{\SCH}{\mbox{\sf SCH}}
\nc{\ZF}{\mbox{\sf ZF}}
\nc{\On}{{\rm On}}
\nc{\rest}{\restriction}
\nc{\nrest}{\!\rest\!}
\nc{\bF}{\mathbb F}
\nc{\F}{\mathbb F}
\nc{\bH}{\mathbb H}
\nc{\R}{\mathbb R}
\nc{\M}{\mathbb M}
\nc{\bQ}{\mathbb Q}
\nc{\cA}{\mathcal A}
\nc{\cP}{\mathcal P}
\nc{\cC}{\mathcal C}
\nc{\cF}{\mathcal F}
\nc{\bP}{\mathbb P}
\nc{\bT}{\mathbb T}
\nc{\bG}{\mathbf G}
\nc{\concat}{{}^\smallfrown{}}   
\nc{\such}{\: | \:}                   
\nc{\ms}{\medskip}
\nc{\eps}{\varepsilon}
\nc{\seq}[2]{\langle #1 \such #2 \rangle}
\DeclareMathOperator{\Add}{Add}
\begin{document}

\title[Tree Forcings]{Distributivity and Minimality in Perfect Tree Forcings for Singular Cardinals}
\author{Maxwell Levine}
\address{Albert-Ludwigs-Universit\"at Freiburg,
Mathematisches Institut, Abteilung f\"ur math. Logik, Ernst--Zermelo--Stra\ss e~1, 
79104 Freiburg im Breisgau, Germany}
\email{maxwell.levine@mathematik.uni-freiburg.de}
\author{Heike Mildenberger}
\address{Albert-Ludwigs-Universit\"at Freiburg,
Mathematisches Institut, Abteilung f\"ur math. Logik, Ernst--Zermelo--Stra\ss e~1, 
79104 Freiburg im Breisgau, Germany}
\email{heike.mildenberger@math.uni-freiburg.de}

\subjclass[2010]{Primary: 03E40 Secondary: 03E20}
\keywords{Distributivity laws for complete boolean algebras, minimal forcing extensions}

\begin{abstract} Dobrinen, Hathaway and Prikry studied a forcing $\bP_\kappa$ consisting of perfect trees of height $\lambda$ and width $\kappa$ where $\kappa$ is a singular $\omega$-strong limit of cofinality $\lambda$. They showed that if $\kappa$ is singular of countable cofinality, then $\bP_\kappa$ is minimal for $\omega$-sequences assuming that $\kappa$ is a supremum of a sequence of measurable cardinals. We obtain this result without the measurability assumption.

Prikry proved that $\bP_\kappa$ is $(\omega,\nu)$-distributive for all $\nu<\kappa$ given a singular $\omega$-strong limit cardinal $\kappa$ of countable cofinality, and Dobrinen et al.\ asked whether this result generalizes if $\kappa$ has uncountable cofinality. We answer their question in the negative by showing that $\bP_\kappa$ is not $(\lambda,2)$-distributive if $\kappa$ is a $\lambda$-strong limit of uncountable cofinality $\lambda$ and we obtain the same result for a range of similar forcings, including one that Dobrinen et al.\ consider that consists of pre-perfect trees. We also show that $\bP_\kappa$ in particular is not $(\omega,\cdot,\lambda^+)$-distributive under these assumptions.

While developing these ideas, we address natural questions regarding minimality and collapses of cardinals.\end{abstract}

\maketitle

\section{Introduction and Background}\label{intro-sec}

The study of forcing in set theory is bound to the study of complete Boolean algebras through the fact that every forcing poset can be identified with its regular open algebra. Functions added in the forcing extension correspond to failures of distributivity in the regular open algebra, and sub-extensions correspond to complete subalgebras. Questions on distributivity and minimality thus motivated some of the early development of forcing \cite{Scott_boolean}. Solovay asked the question: For which cardinals $\kappa$ is there a Boolean algebra that is $(\omega,\nu)$-distributive for all $\nu < \kappa$ but not $(\omega,\kappa)$-distributive? Namba \cite{Namba_main} answered this question for regular cardinals $\kappa \ge \aleph_2$ using a perfect tree forcing and an assumption on cardinal exponentiation. In unpublished work, Prikry answered Solovay's question for singular cardinals of countable cofinality, also using a perfect tree forcing. Both forcings are minimal for $\omega$-sequences. More recently, Dobrinen, Hathaway, and Prikry \cite{DobrinenHathawayPrikry} developed this line of investigation further and posed an open-ended version of Solovay's question: Given a regular cardinal $\lambda$, for which cardinals $\kappa$ is there a complete Boolean algebra that is $(\lambda,\nu)$-distributive for all $\nu<\kappa$ but not $(\lambda,\kappa)$-distributive? This paper focuses on the case where $\kappa$ is singular and $\omega<\lambda = \cf(\kappa)$ by ruling out many natural candidates.

In pursuing this question, the most natural forcings to consider are higher tree forcings of the sort originally studied by Kanamori using additional requirements on splitting that do not pertain to the countable cases \cite{Kanamori-higher-sacks}. This includes the ``vertical'' requirement that the trees are both closed under ascending sequences of splitting nodes and the ``horizontal'' requirement that they split into sufficiently closed filters. Dobrinen et al.\ point out that the main obstacle to studying higher tree forcings in full generality has to do with the fusion sequences typically used to work with tree forcings---namely, that fusion sequences of length $\cf(\kappa)$ may break down at the first countable step. Our work here considers what happens both with and without Kanamori's splitting requirements. We show that even with the splitting requirements, a variety of perfect tree forcings for singular cardinals $\kappa$ of uncountable cofinality fail to be $(\cf(\kappa),2)$-distributive. This answers a question of Dobrinen et al$.$ regarding the generalization of Prikry's perfect tree forcing. We also show countable distributivity fails for all versions but the ones consisting of trees that obey both the vertical and horizontal requirements. In other words, our findings suggest that failures of countable distributivity predominate in the higher cases when one does not deliberately ensure the viability of fusion sequences.

Most of the introduction to this paper will be a review of definitions of the concepts we are working with. We consider singulars of countable cofinality in Section~\ref{countable-sec}, in which we present a short proof of Prikry's theorem on $(\omega,\nu)$-distributivity and obtain $\omega$-minimality for $\bP_\kappa$ given an $\omega$-strong limit $\kappa$ of countable cofinality. In Section~\ref{bernstein-sec} we move to the case of uncountable cofinality and present a technique whereby, given a perfect tree $T$ of singular width $\kappa$ and height $\lambda>\omega$, we consider Bernstein sets on the cofinal branches $[T \! \rest \! \delta]$ for some $\delta<\lambda$ to define useful functions $\lambda \to \On$ in the extensions. We use this technique to prove that $\bP_\kappa$ is not $(\cf(\kappa),2)$-distributive. Then we settle some natural questions on the closed versions of perfect tree forcings and obtain a non-minimality result for one of the versions that is not closed. In Section~\ref{balcar-simon-sec} we define a concept of slow fusion that allows us to work with higher tree forcings that are not as tame as those usually considered in the literature. We use this together with an old trick of Balcar and Simon to obtain various failures of countable distributivity. In Section~\ref{3par-sec} we obtain failures of three-parameter distributivity. A curiosity resulting from this section is that the regular open algebra of the poset of perfect binary trees in a regular uncountable cardinal $\kappa$ (a higher perfect tree forcing without Kanamori's restrictions) is isomorphic to that of the L{\'e}vy collapse $\Levy(\omega,\kappa^+)$ if $2^\kappa = \kappa^+$. Generally, Sections~\ref{balcar-simon-sec} and \ref{3par-sec} are relevant to the cases where $\kappa$ is regular despite this paper's title.

\subsection{Distributivity and Boolean Algebras}

Here we lay down our most important definitions pertaining to Boolean algebras. More details can be found in Jech's textbook \cite{Jech3} and in the Handbook of Boolean Algebras \cite{Jech:distrBA}.

\begin{definition} Given a poset $\bP$, the \emph{regular open algebra} $\ro(\bP)$ of $\bP$ consists of subsets $u \subseteq \bP$ such that:

\begin{enumerate}[(i)]
\item $\forall p \in u, q \le p, q \in u$;
\item $\forall p \notin u, \exists q \le p, \forall r \le q, r \notin u$.
\end{enumerate}
\end{definition}

For a Boolean algebra $B$, we denote by $B^+ = B \setminus \{0_B\}$.

\begin{fact} If $\bP$ is a poset, then $\ro(\bP)$ is a complete Boolean algebra and $\bP$ embeds densely into $\ro(\bP)$.\end{fact}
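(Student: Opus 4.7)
The plan is to introduce the Boolean operations on $\ro(\bP)$ explicitly, verify the axioms, extend to arbitrary joins for completeness, and then exhibit the dense embedding $p \mapsto e(p)$. The central technical tool will be the operator
\[ X \mapsto X^d := \{q \in \bP : \forall r \le q,\ \exists s \le r,\ s \in X\} \]
which, for any $X \subseteq \bP$, produces a regular open set and satisfies $X^d = X$ whenever $X$ is already regular open. I would check these two properties at the outset (the first uses both clauses of the definition; the second follows from clause~(ii) applied contrapositively), since every subsequent verification will reduce to a short manipulation of $(\cdot)^d$.

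For the Boolean structure I would define $u \wedge v := u \cap v$, $\neg u := \{p : \forall q \le p,\ q \notin u\}$, and $u \vee v := (u \cup v)^d$, with top element $\bP$ and bottom $\emptyset$. The routine lattice axioms (commutativity, associativity, absorption) are purely set-theoretic. The content lies in checking that $\neg u$ is regular open and that $(u \cup v)^d$ really is the least regular open set containing both $u$ and $v$. Complementation is then the only remaining point: $u \cap \neg u = \emptyset$ is immediate from $p \le p$, while $(u \cup \neg u)^d = \bP$ follows by observing that, given any $p$ and any $r \le p$, either some $s \le r$ lies in $u$, or no such $s$ exists and then $r$ itself lies in $\neg u$. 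Completeness is handled by the same definition applied to an arbitrary family: $\bigvee_{i \in I} u_i := (\bigcup_{i \in I} u_i)^d$, and its universal property is a one-line verification using clause~(ii).

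For the dense embedding I would set $e(p) := \{p\}^d$; this is regular open by the general fact about $(\cdot)^d$. Order preservation and the preservation of incompatibility follow by unwinding the definitions. Density then reduces to the claim that for every nonzero $u \in \ro(\bP)$ and every $p \in u$ one has $e(p) \subseteq u$: if $q \in e(p)$, then every $r \le q$ has some $s \le r$ with $s \le p$, hence $s \in u$ by downward closure, so $q \in u$ by clause~(ii). The main obstacle, such as it is, is just the bookkeeping around the operator $(\cdot)^d$; there is a minor subtlety that $e$ need not be injective when $\bP$ fails to be separative, but this does not affect density and is usually handled by passing to the separative quotient first.
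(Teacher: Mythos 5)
Your proof is correct and is the standard argument: the paper states this as a well-known fact without proof, deferring to Jech's textbook and the Handbook of Boolean Algebras, and your construction via the regularization operator $X \mapsto X^d$, the operations $u \wedge v = u \cap v$, $\neg u$, $\bigvee_i u_i = (\bigcup_i u_i)^d$, and the map $p \mapsto \{p\}^d$ is exactly the proof found there. Your remark that the map need not be injective for non-separative $\bP$ is the right caveat and does not affect density or the forcing equivalence the paper needs.
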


\begin{definition}[{\cite[Def.17]{BalcarSimonHandbook}}]
 Let $B$ be a Boolean algebra. Let $\theta$, $\mu$, $\nu$ be cardinals such that $\theta, \nu \geq \omega$ and $\mu \geq 2$.
 
 \begin{itemize}
 
 \item  A collection $\cP \subseteq \cP(B^+)$ is called a \emph{matrix} if each member of $\cP$ is a maximal disjoint subset of $B^+$.
 
 \item We say that $B$ is \emph{$(\theta,\nu, \mu)$-distributive} if for every matrix $\cP = \{P_\alpha \such  \alpha \in \theta\}$ such that $|P_\alpha| \leq \nu$ there is some maximal disjoint set $Q \subseteq B^+$ such that for each $q \in Q$ and $\alpha \in \theta$, $|\{p \in P_\alpha \such p \cap q \neq 0\}| < \mu$.
  
\item If $B$ is $(\theta,\nu, 2)$-distributive, then we say that $B$ is $(\theta,\nu)$-distributive.

\item Finally, $B$ is called $\theta$-distributive if it is $(\theta, \nu)$-distributive for any $\nu$.
\end{itemize}
\end{definition}

\begin{fact} A forcing poset $\bP$ is $\theta$-distributive if and only if every function $f:\theta \to \On$ in $V[\bP]$ is already contained in $V$. A poset $\bP$ is $(\theta,\nu)$-distributive if and only if every function $f:\theta \to \nu$ in $V[\bP]$ is already contained in $V$.\end{fact}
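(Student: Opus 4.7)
The plan is to work in $B = \ro(\bP)$ and exploit the standard correspondence between $\bP$-names for ordinals and labeled maximal antichains in $B$. The observation underlying both directions is that a condition $q \in B^+$ decides $\dot f(\alpha)$ exactly when $q$ lies below a (necessarily unique) element of any maximal antichain $P_\alpha$ of conditions deciding $\dot f(\alpha)$.

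For the forward direction, suppose $B$ is $\theta$-distributive and $p \Vdash \dot f : \check\theta \to \check{\On}$. For each $\alpha < \theta$, let $P_\alpha$ be a maximal antichain below $p$ of conditions deciding $\dot f(\alpha)$, with labels $q \Vdash \dot f(\alpha) = \check\beta^q_\alpha$ for $q \in P_\alpha$. Adjoining $-p$ to each $P_\alpha$ produces a matrix in $B$; applying $(\theta,\cdot,2)$-distributivity yields a maximal antichain $Q$ such that each $q \in Q$ is compatible with a unique element of each $P_\alpha \cup \{-p\}$. A direct case analysis shows that any $q \in Q$ compatible with $p$ must lie below $p$ and below the unique element of $P_\alpha$ it touches, hence decides $\dot f$ entirely and determines $f_q \in V$ with $q \Vdash \dot f = \check f_q$. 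In any generic $G \ni p$, the unique $q \in G \cap Q$ satisfies $\dot f[G] = f_q \in V$. For the $(\theta,\nu)$ version, one additionally arranges $|P_\alpha| \leq \nu$ by partitioning below $p$ according to the forced value of $\dot f(\alpha) \in \check\nu$.

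For the converse, given a matrix $\{P_\alpha : \alpha < \theta\}$, fix bijections $\phi_\alpha : P_\alpha \to \otp(P_\alpha)$ in $V$ and define the name $\dot f$ by $\dot f(\alpha) = \phi_\alpha(p_\alpha)$, where $p_\alpha$ is the unique element of $P_\alpha$ in the generic. Then $\dot f$ is forced to be a function $\theta \to \On$ (into $\nu$ when every $|P_\alpha| \leq \nu$), so by hypothesis $\dot f[G] \in V$ for every generic $G$. A truth-lemma density argument---pick $G \ni p$, observe $f := \dot f[G] \in V$, and extract $q \in G$ with $q \leq p$ and $q \Vdash \dot f = \check f$---shows that $D = \{q \in B^+ : \exists f \in V,\ q \Vdash \dot f = \check f\}$ is dense in $B^+$, so any maximal antichain $Q \subseteq D$ is maximal in $B$. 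For each $q \in Q$, the witnessing $f_q$ forces $\phi_\alpha^{-1}(f_q(\alpha)) \in \dot G$, pinpointing the unique element of $P_\alpha$ that $q$ lies below. This verifies the distributivity condition for the matrix.

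The step requiring the most care is the density of $D$ in the backward direction: one has to pass from the semantic statement ``$\dot f[G] \in V$ for every generic $G$'' to a genuinely combinatorial dense set, with a single condition forcing the full equality $\dot f = \check f$ rather than merely its individual coordinate values. This is the standard application of the truth lemma, but it is where the real content of the equivalence sits.
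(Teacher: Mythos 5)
The paper states this Fact without proof, treating it as folklore, so there is no argument of the authors' to compare against. Your proof is correct and is the standard one: the translation between maximal antichains of deciding conditions and matrices in $\ro(\bP)$ in the forward direction, and the truth-lemma density argument producing the refining antichain $Q$ in the converse, are exactly how this equivalence is established in the standard references (modulo trivia such as reading $\otp(P_\alpha)$ as an enumeration of $P_\alpha$ and omitting $-p$ when $p=1$).
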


\begin{definition} Two posets $\bP_1$ and $\bP_2$ are called \emph{forcing equivalent}, denoted $\bP_1 \simeq \bP_2$, if $\ro(\bP_1) \cong \ro(\bP_2)$. We write $\bP_1 \lessdot \bP_2$ if $\ro(\bP_1)$ is isomorphic to a complete subalgebra of $\ro(\bP_2)$.\end{definition}


\subsection{Basic Notions of Tree Forcings}

We recall some notions pertaining to trees. 

\begin{definition}\label{1.1}
Let $\kappa$ be the supremum of an increasing sequence $\vec \kappa = \langle \kappa_\alpha \such \alpha < \delta \rangle$ of regular infinite cardinals where $\delta$ is an ordinal less than or equal to $\kappa$.

\begin{enumerate}
\item
The set $N=N(\vec \kappa)= \bigcup_{\alpha<\delta}\prod_{\beta<\alpha}\kappa_\beta$ consists of all functions $t$ such that $\dom(t) < \delta$ and $(\forall \alpha \in \dom(t) )(t(\alpha) < \kappa_\alpha)$. We typically refer to such functions as \emph{nodes}. We drop the notation for $\vec \kappa$ when the sequence is fixed in context.

\item
A non-empty subset $T \subseteq N$ that is closed under initial segments is called a \emph{tree}. 

\item 
For an ordinal $\alpha$, the set
$T(\alpha)$ is the set of $t \in T$ with $\dom(t) =\alpha$.

\item 
The \emph{height of a tree $T$} is $\hgt(T) = \min\{\alpha \such T(\alpha) = \emptyset\}$.

\item 

We let $[T] = \{f \colon \hgt(T) \to \kappa \such (\forall \alpha < \hgt(T) f \rest \alpha \in T\}$. 
Elements of $[T]$ are called \emph{cofinal branches}.
\item 
For
$t_1$, $t_2 \in N \cup [N]$ we write $t_1 \sqsubseteq t_2$ if $t_2 \nrest \dom(t_1) = t_1$. The tree order is the relation $\sqsubseteq$. If $t = s \cup \{(\dom(s), \beta)\}$, we write $t = s \concat \langle \beta \rangle$.

\item 
A branch of $T$ is an element of $T \cup \{f \in N \cup [T] \such \forall \alpha \in \dom(f), f \nrest \alpha \in T\}$ that does not have a $\sqsupset$-extension.


\item 
$T \nrest \alpha = \bigcup_{\beta<\alpha}T(\beta)$.

\item 
$T \nrest t = \{s \in T \such s \sqsubseteq t  \vee t \sqsubseteq s\}$


\item 
For $t \in T(\alpha)$ we let 

\vspace{-5mm}
\[ \Succ_T(t) = \{c \such c \in T(\alpha+1) \wedge c \sqsupseteq t\}\]
denote the \emph{set of immediate successors of $t$}, and
\[ \OSucc_T(t) = \{\beta \such t \concat \langle \beta \rangle  \in T(\alpha+1)\}\]
denote the \emph{ordinal successor set of $t$}.

\item We call $t \in T$ a \emph{splitting node} if $|\Succ_T(t)|>1$. 

\item If it exists, 
$\Stem(T)$ is the unique splitting node of $T$ that is comparable to all other nodes. 

\end{enumerate}
\end{definition}

\begin{definition}\label{manners} Throughout this paper we will need to consider different manners of branching. Once again we fix $\vec \kappa = \langle \kappa_\alpha \such \alpha < \delta \rangle$.

\begin{enumerate}

\item A non-empty tree $T \subseteq N(\vec \kappa)$ is called \emph{ever-branching} if for each $\alpha < \delta$ and for each $s \in T$ there is a node $t \sqsupseteq s$ such that 

\vspace{-5mm}

\begin{equation}\label{criterion1}
|\Succ_T(t)| \geq \kappa_\alpha.
\end{equation}
Such a node $t$ is called \emph{$\kappa_\alpha$-splitting}. Note that $\dom(t) > \alpha$ is possible.

\item $T$ is called \emph{$<\! \delta$-closed} if $T$ has no branch of length $< \! \delta$.

\item
A non-empty tree $T \subseteq N(\vec \kappa)$ is called \emph{perfect} (in $N(\vec \kappa)$) if it is $<\! \delta$-closed and ever-branching.

\item
A non-empty tree $T \subseteq N$ is called \emph{non-stopping} (in $N(\vec \kappa)$) if for each node $t \in T$  there is a cofinal branch $b $ such that $t \in b$.

\item
A non-empty tree $T \subseteq N$ is called \emph{pre-perfect} (in $N(\vec \kappa)$) if it is ever-branching and non-stopping.

\end{enumerate}
\end{definition}

The property of being ever-branching is not influenced by the choice of the increasing cofinal sequence $\langle\kappa_\alpha \such \alpha < \delta \rangle$. We use these notions for various ordinals $\delta$.

Our predominating cardinal arithmetic assumption in the context of perfect tree forcings is as follows:

\begin{definition}\label{delta_strong_limit} We say that $\kappa$ is a $\delta$-\emph{strong limit} if for all $\tau<\kappa$, $\tau^\delta < \kappa$.
\end{definition}

\subsection{Versions of Tree Forcing}

We fix $\vec{\kappa} = \langle \kappa_\alpha \such \alpha < \lambda\rangle$ for a strictly increasing sequence of regular cardinals with $\omega \leq \lambda$ and let $\kappa = \sup (\vec{\kappa})$. In this paper we would like to cover a wide range of perfect tree forcings $\bP_\kappa$ involving trees $p \subseteq N(\vec{\kappa})$ of height $\lambda$ without being completely encyclopedic. For this purpose, we will cover three main schemes in which tree forcings might vary:



\begin{enumerate}
\item \emph{The type of tree: Miller versus Laver.} Miller-style forcings allow for fairly chaotic branching behavior, whereas Laver-style forcing allows a stem after which all nodes are splitting.


\item \emph{The degree of closure of the trees that serve as conditions}. This is regarding the closure of the trees themselves as well as the closure of splitting nodes. 

\item \emph{A specification of the manner of splitting.} We consider variations of Condition \ref{criterion1}, meaning that we will sometimes ask that the splitting sets themselves take a certain form, possibly a club or a stationary set.

\end{enumerate}

Now we introduce precise definitions, beginning with the types of trees.

\begin{definition}\label{the_forcing} Let $\vec{\kappa}= \langle \kappa_ \alpha \such \alpha < \lambda\rangle$ be an increasing sequence of cardinals with limit $\kappa$ and $\cf(\kappa) = \lambda \geq \omega$.

Conditions in the forcing $\Miller$ are perfect trees in $N(\vec{\kappa})$. 
If the forcing $\Miller$ is defined with respect to a cardinal $\kappa$, we may use the notation $\Miller_\kappa$.

Subtrees are stronger conditions: $q \leq p$ if $q \subseteq p$. 

\end{definition}

\begin{remark} If $\kappa$ is any singular cardinal, then the poset $\M_\kappa$ is the poset $\bP_\kappa$ of Dobrinen,  Hathaway, and Prikry \cite{DobrinenHathawayPrikry} referred to in the abstract.\end{remark}

We recall a Laver-style relative $\Laver_\kappa$ of $\Miller_\kappa$ to which our results also apply.


\begin{definition}\label{more_forcings} Let 
$\vec{\kappa} = \langle \kappa_\alpha \such \alpha < \lambda \rangle$ be an increasing sequence of regular cardinals that converges to $\kappa$, and $\lambda < \kappa$.

Singular Laver-Namba forcing $\Laver_\kappa$ is defined as follows: Conditions are $(<\!\lambda)$-closed trees
$p \subseteq N$ with a unique stem $s$ such that 
\begin{equation}\label{1.2} \forall t \in p (t \sqsupseteq s \rightarrow
|\Succ_p(t)| = \kappa_{\dom(t)}).
\end{equation}

In $\Laver_\kappa$ subtrees are stronger conditions.
\end{definition}

Next, we give definitions regarding the closure of the trees that serve as conditions.

\begin{definition} If conditions are $(<\!\lambda)$-closed trees $p$, we just write $\Miller_\kappa$ or $\Laver_\kappa$. If we weaken perfect to pre-perfect, we denote these posets ${}^{\textup{pre}}\Miller_\kappa$ or ${}^\textup{pre}\Laver_\kappa$. \end{definition}


\begin{definition} If conditions are ``$(<\!\lambda)$-splitting-closed'' trees $p$, i.e., any increasing $< \lambda$-sequence $\langle t_\alpha \such \alpha < \delta\rangle$, $\delta < \lambda$, of $\kappa_\alpha$-splitting nodes $t_\alpha$ in $p$ has a limit (the union) which is a $\kappa_\delta$-splitting node in $p$. We write ${}^{\textup{scl}}\Miller$ for the $(<\!\lambda)$-splitting-closed version of $\Miller$. In the case of Laver forcings $(<\!\lambda)$-closure implies $(<\!\lambda)$-splitting closure, so there is no meaningful distinction.\end{definition}

For $\lambda = \omega$, the ``vertical'' requirements coincide and describe the trees without maximal nodes.

Finally, we give definitions regarding the manner of splitting.

\begin{definition} We enumerate the versions we consider below.
\begin{enumerate}[(i)]
\item We write $\Laver = \Laver^\textup{stat}$ if for any splitting node $t \in p \in \bP$, we have $\OSucc_p(t) $ is a stationary subset of $\{\beta \in \kappa_{\dom(t)} \such \cf(\beta) = \omega \}$. For $p \in \Miller^\textup{stat}$ we require that for any node there is an extension $t$ that splits into a stationary subset of 
$\{\beta \in \kappa_{\dom(t)} \such \cf(\beta) = \omega \}$.\footnote{$\Laver^\textup{stat}$ has been used to add exact upper bounds in the context of \textup{\textsf{PCF}} theory \cite{CummingsMagidor2010}. We are not aware of any use of $\Miller^\textup{stat}$ yet. We will neglect the small adjustments that need to be made to some our definitions that would tailor them for $\Miller^\textup{stat}$.} 

This is the case of $I$-positive sets for an ideal $I$ whose additivity matters. 
\item
We write $\Laver = \Laver^\textup{club}$ if $\OSucc_p(t)$ contains a club subset of $\kappa_{\dom(t)}$.  For $p \in \Miller^\textup{club}$ we require that for any node there is an extension $t$ that splits into a superset of a club in $\kappa_{\dom(t)}$. Here splitting is into a highly closed filter.
\item
 If splitting is defined over a cardinal-wise criterion as in Clauses~\eqref{criterion1} or \eqref{1.2} we write just $\Miller$ and $\Laver$.
 \end{enumerate}\end{definition}

\begin{remark}
In the case of countable $\lambda$, the  forcing $\Laver^\textup{stat}_\kappa$ is the one studied by Cummings and Magidor \cite{CummingsMagidor2010}.
\end{remark}

Each version can be combined in various ways with each of the three named splitting criteria, e.g. ${}^{\textup{pre}}\Laver_\kappa^\textup{stat}$, $\Laver_\kappa^\textup{club}$.

\begin{observation}\label{closure_of_the_forcing}
The forcing notions ${}^{\textup{scl}}\Miller^\textup{club}_\kappa$ and $\Laver_\kappa^\textup{club}$ are $(< \! \lambda)$-closed.
\end{observation}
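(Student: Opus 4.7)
The plan is to show, for each descending sequence $\langle p_\alpha \such \alpha < \delta\rangle$ with $\delta < \lambda$ in either forcing, that the set-theoretic intersection $p := \bigcap_{\alpha < \delta} p_\alpha$ serves as a lower bound. I treat the Laver case first. Set $s_\alpha := \Stem(p_\alpha)$. My first step is to observe that the stems form a $\sqsubseteq$-increasing sequence: given $p_\beta \leq p_\alpha$, the node $s_\alpha$ must lie in $p_\beta$ because $p_\beta$ cannot be confined to a single branch of $p_\alpha$ strictly below $s_\alpha$ (its own stem would then have no room to split), and then $s_\beta$ must extend $s_\alpha$ because every node strictly below $s_\alpha$ in $p_\alpha$ has a unique successor there and therefore cannot split in $p_\beta$. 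Setting $s := \bigcup_{\alpha<\delta} s_\alpha$, the regularity of $\lambda$ forces $\dom(s) < \lambda$, and the $(<\!\lambda)$-closure of each $p_\alpha$ then places $s$ in $\bigcap_\alpha p_\alpha$.

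Next I would verify that $p$ is a Laver condition with stem $s$. For $t \sqsubseteq s$, any $p_\alpha$ with $\dom(s_\alpha) > \dom(t)$ witnesses that $t$ has a unique successor in $p$, namely the one on the path to $s$. For $t \sqsupseteq s$ we have $t \sqsupseteq s_\alpha$ for every $\alpha < \delta$, so $\OSucc_{p_\alpha}(t) \supseteq C_\alpha^t$ for a club $C_\alpha^t \subseteq \kappa_{\dom(t)}$, and therefore
\[
\OSucc_p(t) \;=\; \bigcap_{\alpha < \delta} \OSucc_{p_\alpha}(t) \;\supseteq\; \bigcap_{\alpha < \delta} C_\alpha^t,
\]
which is again a club in $\kappa_{\dom(t)}$ because $\delta < \lambda \leq \kappa_{\dom(t)}$. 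A short argument of the kind given in the first paragraph also shows that every node of $p$ is comparable to $s$, so $s$ really is the stem. Closure of $p$ under increasing sequences of length $<\lambda$ is inherited from each factor, so $p \in \Laver^\textup{club}_\kappa$ is the required lower bound.

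For ${}^{\textup{scl}}\Miller^{\textup{club}}_\kappa$ there is no unique stem, so ever-branching of $p := \bigcap_\alpha p_\alpha$ has to be established by hand. Given $t \in p$ and $\beta < \lambda$, the strategy is to build inductively a $\sqsubseteq$-increasing sequence of common splitting nodes $\langle t_\gamma \such \gamma < \gamma^* \rangle$ above $t$, with $\gamma^* \geq \beta$ chosen large enough that $\kappa_{\gamma^*} > \delta$: at successor stages $t_{\gamma+1}$ is produced from the ever-branching of a suitable $p_\alpha$, and at limit stages the ${}^{\textup{scl}}$ hypothesis ensures that the union of the $t_\gamma$ remains a splitting node of every $p_\alpha$. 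Once this sequence is in place, the same club-intersection argument as in the Laver case shows that its final union admits at least $\kappa_\beta$ successors in the intersection $p$, which witnesses ever-branching. The $(<\!\lambda)$-closure of $p$ itself is again inherited from that of each $p_\alpha$.

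The main obstacle is the bookkeeping in the Miller case: the inductive choices must ensure that every $t_\gamma$ lies in \emph{each} of the $p_\alpha$, not merely in the ones considered so far. The ${}^{\textup{scl}}$ hypothesis is precisely what makes this possible, since it keeps the splitting nodes of every $p_\alpha$ closed under unions of short increasing sequences, and the construction can therefore be pushed through all limit stages of cofinality below $\lambda$ without losing membership in any factor.
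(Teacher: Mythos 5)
Your Laver half is correct and is essentially the paper's argument (the paper only remarks that this case ``is simpler''): the stems increase, their union $s$ lies in every $p_\alpha$ by the $(<\!\lambda)$-closure of the individual trees, and above $s$ the successor sets are nested club-supersets whose intersection is still a club because $\delta<\lambda\leq\kappa_{\dom(t)}$. Nothing to repair there.

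The Miller half has a genuine gap, and it sits exactly where you point: ``the inductive choices must ensure that every $t_\gamma$ lies in each of the $p_\alpha$.'' Your claim that ``the ${}^{\textup{scl}}$ hypothesis is precisely what makes this possible'' is not right. Splitting-closure only handles the \emph{limit} stages of your sequence: it guarantees that a union of splitting nodes of $p_\alpha$ is again a splitting node of $p_\alpha$ of the fitting degree. It does nothing at the \emph{successor} steps, where you must extend the node built so far to a splitting node of ``a suitable $p_\alpha$'' that still belongs to every $p_{\alpha'}$, including those with $\alpha'>\alpha$; ever-branching of $p_\alpha$ only hands you a splitting node of $p_\alpha$, and $p_{\alpha+1}$ may simply have pruned that node away, so the recursion cannot even be continued. (The paper's own write-up is terse at the same spot: it posits the increasing sequence $t_\alpha\in\Split_\alpha(p_\alpha)$ above $s$ without saying how to keep it inside the intersection.) Two further ingredients close the gap. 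First, check that $q=\bigcap_{\alpha<\delta} p_\alpha$ is non-stopping: for $u\in q$ the sets $\Succ_{p_\alpha}(u)$ are nested and each is either eventually a fixed singleton or a superset of a club, so their intersection is nonempty --- this is the one place where one needs non-splitting nodes to have unique successors (or a comparable normalization), since a nested $\omega$-sequence of merely infinite successor sets can have empty intersection. Second, observe that every cofinal branch of a perfect $(<\!\lambda)$-closed tree contains cofinally many of its splitting nodes, since otherwise some node would have only a single branch above it. Now fix one cofinal branch $b$ of $q\rest t$, which exists by non-stopping plus the inherited $(<\!\lambda)$-closure, and run your diagonal recursion \emph{along $b$}: choose $t_\gamma\sqsubset b$ splitting in $p_\gamma$ of increasing degree. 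Membership of every $t_\gamma$ in every $p_{\alpha'}$ is now automatic because $b$ lies in $q$, and splitting-closure together with the $(<\!\kappa_{\dom(t_\delta)})$-completeness of the club filter finishes the argument exactly as you describe.
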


\begin{proof} We show that for a decreasing sequence $\langle p_\alpha \such \alpha < \delta \rangle$ of conditions of length $\delta < \lambda$,
the intersection $q = \bigcap_{\alpha<\delta} p_\alpha$ is a lower bound. 
In the Miller case, we have to show that $q \in {}^{\textup{scl}}\Miller_\kappa^\textup{club}$. 
The tree $q$ is not empty, because $\emptyset \in p_\alpha$ for each $\alpha < \delta$. Since any $p_\alpha$ is a $(<\!\lambda)$-closed tree, $q$ is also a $(<\!\lambda)$-closed tree. Now we show that $q$ is ever-branching.   Given $s \in q$, we choose an $\sqsubset$-increasing  sequence 
$\langle t_\alpha \such \alpha < \delta \rangle$ such that $t_\alpha \in \Split_\alpha(p_\alpha)$ is above $s$. We let $t_\delta = \bigcup\{t_\alpha \such \alpha < \delta\}$. Then for any $\alpha < \delta$, the limit $t_\delta$ is a $(\geq \kappa_\delta)$-splitting node in $p_\alpha$, because $p_\alpha$ fulfills that any $(<\!\lambda)$-limit of splitting nodes is a splitting node of the fitting degree. Now we use $\delta < \lambda \leq \kappa_\delta$ and the $(<\! \kappa_\delta)$-completeness of the club filter in order to intersect $\OSucc_{p_\alpha}(t_\delta)$, $\alpha < \delta$, and find that $t_\delta$ is a $\kappa_\delta$-splitting node in $q$. The proof for club Laver forcing $\Laver_\kappa^\textup{club}$ is simpler.
\end{proof}

In the literature there are analogs of this closure for regular $\kappa$ (in this case $\lambda = \kappa$): Baumgartner's higher Silver forcing \cite[Section 6]{baumg:a-d}, Kanamori's seminal work \cite{Kanamori-higher-sacks} on forcings with higher Sacks trees, Landver's \cite{Landver-1992} versions of higher Sacks trees, and club $\kappa$-Miller forcing, e.g., in \cite{friedman-zdomskyy}.

Observation~\ref{closure_of_the_forcing} is sharp in the case of uncountable $\lambda$.
We will prove in Section 4 that the other named variations are not $\omega$-distributive. Waiving the ``vertical condition'' that any $(<\!\lambda)$-limit of splitting nodes is a splitting node (see Subsection~\ref{SubS4.1}) or
 waiving the closure of set of ordinal splitting sets (see Subsections~\ref{SubS4.2}, \ref{SubS4.3}) allows us to show the failure of $\omega$-distributivity. All pre-perfect versions strongly violate $\omega$-distributivity (see Subsection \ref{SubS4.4}).
For uncountable $\lambda$, $(<\!\lambda)$-closure implies $\omega$-distributivity.
Hence in the uncountable case, the versions in Observation~\ref{closure_of_the_forcing} 
are the only $(<\!\lambda)$-closed posets among the named forcings.

\subsection{Notable Features of Perfect Tree Forcing}\label{SubS1.4}

A notable feature of perfect tree forcings in general is that of minimality, which motivated Sack's early work \cite{Sacks:perfect}. For our purposes, we will consider a relaxed version of minimality.

\begin{definition}[\cite{Sacks:perfect}] If $\lambda$ is a cardinal, a forcing $\bP$ is \emph{minimal for $\lambda$-sequences} if for any transitive model $M$ such that $V \subseteq M \subseteq V[\bP]$, if $A \in M$ is a function $A: \lambda \to \On$ such that $A \notin V$, then $M = V[\bP]$.\end{definition}

In the case of countable cofinality, minimality is often understood through the rendering of the generic branch. This is also justified in the higher cases where we are dealing with $(<\!\lambda)$-closed notions of forcing. 

 \begin{proposition}\label{generic-branch-prop}
 Let $\bP$ be ${}^{\textup{scl}}\Miller_\kappa^\textup{club}$ or $\bP= \Laver^\textup{club}$.
Then the so-called generic branch
\[ g_G=\bigcup \{\Stem(p) \such p \in G\}
\]
is an element of $\prod_{\alpha < \lambda} \kappa_\alpha$ and $V[g_G] = V[G]$.
\end{proposition}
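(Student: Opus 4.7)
The plan is to establish both assertions via density of conditions with stems of arbitrary length $\alpha < \lambda$. For $\Laver^\textup{club}$ this is immediate: given a condition $p$ and $\alpha < \lambda$, pick any $s \in p$ with $\Stem(p) \sqsubseteq s$ and $\dom(s) \geq \alpha$, and $p \nrest s$ is a condition with stem $s$. For ${}^{\textup{scl}}\Miller_\kappa^\textup{club}$ we inductively construct a $\sqsubseteq$-increasing sequence $\langle t_\beta : \beta \leq \alpha \rangle$ of $\kappa_\beta$-splitting nodes in $p$ with $\dom(t_\beta) \geq \beta$: at successor steps we combine ever-branching with a one-level extension to force the level to grow, and at limit steps we take the union, which is $\kappa_\beta$-splitting by splitting-closure. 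Then $p \nrest t_\alpha$ is a condition whose unique splitting node comparable to all others is $t_\alpha$, so $\Stem(p \nrest t_\alpha) = t_\alpha$.

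Combining this density with the observation that $r \leq p$ (with both having stems) forces $\Stem(r) \sqsupseteq \Stem(p)$ --- the alternative $\Stem(r) \sqsubsetneq \Stem(p)$ would place $\Stem(r)$ on the linear spine of $p$ below $\Stem(p)$, where $\Stem(r)$ has at most one successor in $p$ and hence in $r \subseteq p$, contradicting its being splitting in $r$ --- one concludes that the stems of conditions in $G$ form a $\sqsubseteq$-coherent chain whose union $g_G$ has domain $\lambda$ by genericity, with each $g_G(\alpha) < \kappa_\alpha$ by construction. Thus $g_G \in \prod_{\alpha<\lambda}\kappa_\alpha$.

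For $V[G] = V[g_G]$ the containment $V[g_G] \subseteq V[G]$ is trivial. For the reverse I would define $H := \{p \in \bP : g_G \in [p]\}$ in $V[g_G]$ and show $H = G$ in $V[G]$. The inclusion $G \subseteq H$ is a direct consequence of long-stem density: for $p \in G$ and $\alpha < \lambda$, pick $q \in G$ with $q \leq p$ and $\dom(\Stem(q)) > \alpha$, so $g_G \rest \alpha = \Stem(q) \rest \alpha \in q \subseteq p$. For $H \subseteq G$ the key step is the forcing lemma
\[ r \Vdash \dot{g}_G \in [\check{p}] \iff r \leq p, \]
whose non-trivial direction reduces to: $r \Vdash \dot{g}_G \rest \alpha \in \check{p}$ iff every node at level $\alpha$ of $r$ lies in $p$, using long-stem density to realize any given $t \in r(\alpha)$ as the value of $\dot{g}_G \rest \alpha$ forced by some $s \leq r$ with $\Stem(s) \sqsupseteq t$. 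Assuming $p \in H \setminus G$ and picking by genericity $q \in G$ incompatible with $p$, the lemma yields $q \Vdash \dot{g}_G \notin [\check{p}]$, contradicting $p \in H$.

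The main technical hurdle is the construction of long-stem conditions in ${}^{\textup{scl}}\Miller_\kappa^\textup{club}$: the inductive construction relies crucially on the splitting-closure axiom to pass through limit ordinals $\beta < \lambda$, which is exactly the feature distinguishing this well-behaved variant from the more unruly tree forcings treated later in the paper.
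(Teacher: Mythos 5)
Your proposal is correct and follows essentially the same route as the paper: long-stem density (which the paper attributes compactly to $(<\!\lambda)$-closure and you unfold via splitting-closure inside a single condition) gives $g_G \in \prod_{\alpha<\lambda}\kappa_\alpha$, and the key forcing lemma that $r \Vdash \dot g_G \in [\check p]$ implies $r \le p$ --- proved in both cases by picking $t \in r \setminus p$ and passing to $r \nrest t$ --- yields $G = \{p \in \bP \such g_G \in [p]\}$ and hence $V[G] = V[g_G]$. The only cosmetic difference is that you derive $H \subseteq G$ by contradiction through an incompatible $q \in G$, whereas the paper argues directly from a $q \in G$ forcing $g_G \in [p]$.
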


\begin{proof}
The $(<\!\lambda)$-closure of the forcing notion 
implies that $g_G \in \prod_{\alpha < \lambda} \kappa_\alpha$. Now we show $V[G] \subseteq V[g_G]$. 
Suppose $q \Vdash g_G \in [p]$. Then $q \leq p$, since otherwise we could pick $t \in q \setminus p$ and have $q_t \Vdash g_G \not\in [p]$. We claim
$G = \{p \in \bP \such g_G \in [p] \}$. It is obvious that $p \in G$ implies $g_G \in [p]$. Conversely, suppose that $g_G \in [p]$. Then there is some $q \in G$ such that $q \Vdash g_G \in [p]$. We have $q \leq p$ and thus $p \in G$.
\end{proof}

In all of our minimality results, we will work with the generic branch. For $\lambda = \omega$, the closure requirement is vacuous. For uncountable $\lambda$, we will show that the closed versions are non-minimal, as is at least one of the non-closed versions (see Theorem~\ref{aleph1-embedding} below).

Another notable property of tree forcings is that of fusion.

\begin{definition}\label{fusion_stuff_usual} 
 We fix a sequence $\langle\kappa_\alpha \such \alpha < \lambda\rangle$ of regular cardinals converging to $\kappa$. The notions $\Split_\alpha(p)$ and $\leq_\alpha$ depend on this choice. Let $\bP$ be any of our forcings. 
 For convenience, we restrict all our forcing notions to the dense sets of conditions in which $\Stem(p)$ is $\kappa_0$-splitting. 
 We write $\lambda = \cf(\kappa)$.
\begin{enumerate} 
\item Take $p \in \bP_\kappa$. By induction on $\alpha \in \lambda$, we define
\begin{equation*}
\begin{split}
\Split_\alpha(p) = \bigl\{t \in p 
\such & t \mbox{ is $\sqsubseteq$-minimal in $p$ such that }\\
&|\Succ_p(t)| \geq \kappa_\alpha
\wedge 
\exists  \langle t_\beta \such \beta < \alpha \rangle,
\\
&
\bigl(\forall \beta < \gamma < \alpha, t_\beta \sqsubset t_\gamma \sqsubset t, 
\\
& \wedge \forall \beta < \alpha, t_\beta \in \Split_\beta(p)\bigr)
\bigr\}.
\end{split}
\end{equation*}
Observe that $\Split_\alpha(p)$ is an antichain in $p$. In particular, for $p \in \Laver_\kappa$, we have $\Split_\alpha(p) = p(\dom(\Stem(p)) + \alpha)$ and that for $t \in \Split_\alpha(p)$, $\OSucc_p(t) \subseteq \kappa_{\dom(\Stem(p))+\alpha}$ is of cardinality
$\kappa_{\dom(\Stem(p))+\alpha}$.
\item Let $p, q \in \bP$, $\alpha < \lambda$. We let $q \leq_\alpha p$ if $q \leq p$ and $\Split_\alpha(p) = \Split_\alpha(q)$.

\item 
A sequence $\langle p_\alpha \such \alpha < \delta \rangle$ such that $\delta \leq \lambda$ and for $\alpha <\gamma < \delta$, $p_{\gamma} \leq_\alpha p_\alpha$ is called a \emph{fusion sequence}.
\end{enumerate}
\end{definition}

We recall the so-called fusion lemma:

\begin{lemma}\label{fusion_lemma} Suppose that either $\cof(\kappa) = \lambda = \omega$ and that $\bP$ is any of the perfect versions or that $\cof(\kappa)=\lambda > \omega$ and that $\bP$ is one of the closed versions, $\Laver_\kappa^\textup{club}$ or $\,{}^{\textup{scl}}\Miller_\kappa^\textup{club}$. Then for a fusion sequence $\langle p_\alpha \such \alpha < \delta\rangle$, $1\leq\delta \leq \lambda$, we have $q_\delta = \bigcap\{p_\alpha \such \alpha < \delta\} \in \bP$ and $q_\delta \leq_\alpha p_\alpha$ for all $\alpha < \delta$.
\end{lemma}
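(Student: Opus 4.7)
The plan is to proceed by induction on $\delta$. The successor case $\delta = \gamma + 1$ is trivial because monotonicity of the sequence gives $q_{\gamma+1} = p_\gamma$ directly, and $q_{\gamma+1} \leq_\alpha p_\alpha$ for $\alpha \leq \gamma$ is immediate from the fusion hypothesis. So I focus on the case where $\delta$ is a limit.

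The main claim I would establish is: for each $\alpha < \delta$, $\Split_\alpha(q_\delta) = \Split_\alpha(p_\alpha)$ and moreover the successor set at every node of $\Split_\alpha(p_\alpha)$ is preserved, i.e.\ $\Succ_{q_\delta}(t) = \Succ_{p_\alpha}(t)$ for $t \in \Split_\alpha(p_\alpha)$. I would prove this by induction on $\alpha$, using the fusion hypothesis $p_\gamma \leq_\alpha p_\alpha$ for $\gamma \geq \alpha$ to guarantee that the $\alpha$-th splitting antichain is stabilized from stage $\alpha$ onwards: each $t \in \Split_\alpha(p_\alpha)$ then lies in every $p_\gamma$ and hence in $q_\delta$, while the inductive hypothesis on lower levels ensures that the successor sets at these nodes are not further pruned in the intersection. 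From this claim the conclusion $q_\delta \leq_\alpha p_\alpha$ is immediate.

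Next I would check that $q_\delta$ is a legitimate condition. Non-emptiness uses $\Stem(p_0) \in \Split_0(p_\alpha) = \Split_0(p_0)$ for every $\alpha$; closure under initial segments is automatic. For ever-branching, given $s \in q_\delta$ and $\alpha < \lambda$, I would invoke ever-branching of $p_\alpha$ to obtain $t \in \Split_\alpha(p_\alpha)$ extending $s$, and the main claim then delivers $t \in q_\delta$ with $|\Succ_{q_\delta}(t)| \geq \kappa_\alpha$. In the $\lambda = \omega$ case, the non-stopping and perfect-tree conditions for the pre-perfect versions are then checked by zig-zagging up through the levels $\Split_n(p_n)$ to produce cofinal branches through any prescribed node. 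When $\lambda > \omega$, $(<\!\lambda)$-closure of $q_\delta$ follows because the union of any $<\!\lambda$-chain from $q_\delta$ lies in each $p_\alpha$ by its own $(<\!\lambda)$-closure, and hence in $q_\delta$; and for ${}^{\textup{scl}}\Miller_\kappa^\textup{club}$, splitting-closure at a limit node $t = \bigcup_\xi t_\xi$ of splitting nodes reduces to intersecting $<\!\delta$ many clubs $\OSucc_{p_\alpha}(t) \subseteq \kappa_{\dom(t)}$, which is handled by exactly the $\delta$-completeness-of-the-club-filter argument from the proof of Observation~\ref{closure_of_the_forcing}.

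The hard part will be the successor preservation at splitting nodes along the fusion sequence: if the successor sets at $\Split_\alpha$-nodes could genuinely shrink between $p_\alpha$ and $p_\gamma$ for $\gamma > \alpha$, then $q_\delta$ could fail to be $\kappa_\alpha$-splitting at those nodes and the whole scheme would collapse. I would secure this preservation by exploiting the chain-of-splitting-nodes clause in the definition of $\Split_\alpha$, which forces the whole splitting skeleton below the $\alpha$-th level to be fixed once $\Split_\alpha$ is fixed. Once this is in hand, the remaining verifications parallel the classical Sacks fusion argument in the $\lambda = \omega$ case and the proof of Observation~\ref{closure_of_the_forcing} in the $\lambda > \omega$ closed case.
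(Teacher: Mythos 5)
Your overall architecture is sound, and in fact the paper offers no proof to compare against---it only ``recalls'' the lemma---but the one step you yourself single out as the hard part is resolved incorrectly, in two related ways. First, your main claim $\Succ_{q_\delta}(t)=\Succ_{p_\alpha}(t)$ for $t\in\Split_\alpha(p_\alpha)$ is false: the relation $p_{\alpha+1}\le_\alpha p_\alpha$ of Definition~\ref{fusion_stuff_usual} only demands $\Split_\alpha(p_{\alpha+1})=\Split_\alpha(p_\alpha)$, i.e.\ that $t$ stay $\kappa_\alpha$-splitting and minimal above a chain, and fusion constructions exploit exactly this freedom to thin $\Succ_{p_\alpha}(t)$ once, at the step from $p_\alpha$ to $p_{\alpha+1}$. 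The correct statement is $\Succ_{q_\delta}(t)=\Succ_{p_{\alpha+1}}(t)$ (when $\alpha+1<\delta$), which still has size $\ge\kappa_\alpha$ and suffices for $q_\delta\le_\alpha p_\alpha$. Second, and more importantly, your proposed mechanism---that the chain clause in the definition of $\Split_\alpha$ ``fixes the whole splitting skeleton below the $\alpha$-th level''---looks in the wrong direction: what sits below $t$ says nothing about which immediate successors of $t$ survive into the intersection, and without an argument here the intersection could a priori leave $t$ with too few (even zero) successors. The argument that works looks one level up: each $c\in\Succ_{p_{\alpha+1}}(t)$ lies below some node of $\Split_{\alpha+1}(p_{\alpha+1})$ (by ever-branching and maximality of that antichain), and since $\Split_{\alpha+1}(p_\gamma)=\Split_{\alpha+1}(p_{\alpha+1})$ for all $\gamma\ge\alpha+1$, that node---hence $c$---belongs to every later $p_\gamma$ and so to $q_\delta$. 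With this repair the rest of your outline (stabilization of the antichains, ever-branching of $q_\delta$, and the closure arguments for $\lambda>\omega$) goes through.

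It is worth noting that the paper is alert to precisely this pitfall elsewhere: in the slow-fusion order $\le_{\gamma,\alpha}$ of Definition~\ref{slow_fusion_stuff}, preservation of the successor sets at $\Split_\alpha$-nodes is written into the order as clause (3), so the corresponding Lemma~\ref{slow_fusion_lemma} never has to derive it. For the ordinary fusion order $\le_\alpha$ it must be derived, and the one-level-up argument above is the way to do it.
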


Considerations of fusion become more complicated when we consider the cases of higher cofinality. In particular, the fusion lemma implies some $(<\!\lambda)$-closure:
 For this look at a limit $\delta < \lambda$ and let $\langle p_\alpha \such \alpha < \delta\rangle$ be given. If $q=\bigcap \{p_\alpha \such \alpha < \delta\}$ is a condition, then $\Split_\delta(q) \neq \emptyset$, and for any $t \in \Split_\delta(q)$ we have that $q \restriction t $ is a lower bound of
$\langle p_\alpha \rest t \such \alpha < \delta\rangle$; the latter is just an ordinary $\leq$-descending sequence. In Section~\ref{balcar-simon-sec} we will explore situations in which closure of the forcing cannot be assumed.

\section{Improvements to the Countable Case}\label{countable-sec}

\subsection{A Short Proof of Prikry's Theorem}

We begin by giving a proof of Prikry's theorem (see \cite[Theorem 3.5]{DobrinenHathawayPrikry}) that harmonizes with the proof from Jech's textbook \cite{Jech3} that under \CH\ the classical Namba forcing is $(\omega,\omega_1)$-distributive. However, we do not use a rank function. This approach would also work for the classical Namba forcing.


\begin{theorem}[Prikry]\label{prikry-distributivity} 
Suppose that $\cf(\kappa) = \omega$, and assume $\forall \nu < \kappa$, $\nu^\omega < \kappa$. Then for any $\nu < \kappa$, $\Miller_\kappa$ is $(\omega, \nu)$-distributive.
\end{theorem}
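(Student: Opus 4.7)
The plan is a fusion argument: build $q \leq p$ so that $q$ forces $\dot f$ into a set already in $V$, and invoke Boolean-algebra completeness to extract a specific ground-model value. Given $p \in \Miller_\kappa$ and a name $\dot f$ for a function $\omega \to \nu$, I would first pass to a dense subclass to ensure pigeonhole by $\nu$ colors preserves splitting. Concretely, since $\cf(\kappa) = \omega$ and $\nu < \kappa$, fix $N$ with $\kappa_N > \nu$ and restrict to conditions whose $n$th splitting sets lie at levels $\geq N+n$; equivalently, pass to the tail of $\vec\kappa$ so that $\kappa_n > \nu$ for every $n$.

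Next I would construct a fusion $p_0 = p \geq_0 p_1 \geq_1 p_2 \geq_2 \cdots$ so that for each $n$ and each $t \in \Split_n(p_{n+1})$, the subcondition $(p_{n+1})_t$ forces $\dot f(n) = h(n,t)$ for some specific $h(n,t) \in \nu$. At stage $n$: for each $t \in \Split_n(p_n)$ and each $\beta \in \OSucc_{p_n}(t)$, choose $r_{t,\beta} \leq (p_n)_{t \concat \langle \beta\rangle}$ deciding $\dot f(n)$ to some $\alpha_{t,\beta} \in \nu$. Since $|\OSucc_{p_n}(t)| \geq \kappa_{\dom(t)}$ is a regular cardinal exceeding $\nu$, pigeonhole yields $A_t \subseteq \OSucc_{p_n}(t)$ with $|A_t| \geq \kappa_{\dom(t)}$ on which all $\alpha_{t,\beta}$ agree with a common value $h(n,t)$. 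Replacing the subtree of $p_n$ above $t$ by $\{s \in p_n : s \sqsubseteq t\} \cup \bigcup_{\beta \in A_t} r_{t,\beta}$ yields a perfect tree with $t$ still a $\kappa_n$-splitting node, and gluing these replacements over $\Split_n(p_n)$ defines $p_{n+1} \leq_n p_n$.

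By the Fusion Lemma (Lemma~\ref{fusion_lemma}), $q = \bigcap_n p_n \in \Miller_\kappa$. Now in $V$ define $F : [q] \to \nu^\omega$ by $F(b)(n) = h(n, t_n^b)$, where $t_n^b$ is the unique node of $\Split_n(q)$ on $b$. By construction $q \Vdash \dot f(n) = h(n, t_n^{\dot b_G})$ for every $n$, where $\dot b_G$ names the generic branch, so $q \Vdash \dot f \in \rge F$; since $F \in V$, the range $\rge F$ is a set in $V$. Applying completeness of $\ro(\Miller_\kappa)$ to $\bigvee_{g \in \rge F} \Vert \dot f = g\Vert \geq q > 0$ gives some $g \in \rge F \subseteq V$ and $q' \leq q$ with $q' \Vdash \dot f = g$. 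Since $p$ was arbitrary, $\Miller_\kappa$ is $(\omega,\nu)$-distributive. The main obstacle is the pigeonhole inside the fusion: we must decide $\dot f(n)$ below $t$ and simultaneously keep $t$ a splitting node of the appropriate degree. The preliminary reduction to $\kappa_n > \nu$ and the regularity of $\kappa_{\dom(t)}$ are exactly what make $A_t$ large enough; once the fusion is carried out, the fact that $\rge F$ visibly lies in $V$ makes the conclusion essentially automatic.
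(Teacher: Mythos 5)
Your fusion construction through the pigeonhole step is essentially sound (modulo the harmless slip that a node $t \in \Split_n(p_n)$ is only guaranteed $\kappa_n$-many successors rather than $\kappa_{\dom(t)}$-many, which is exactly why your preliminary re-indexing so that $\kappa_n > \nu$ is needed), and it runs parallel to the paper's construction of the tree of deciding conditions $q_s$. The gap is in the final step. You define $F$ on $[q]$ \emph{in $V$} and infer from ``$q \Vdash \dot f(n) = h(n,t_n^{\dot b_G})$ for all $n$'' that $q \Vdash \dot f \in \check{(\rge F)}$. But the generic branch $\dot b_G$ is not an element of $[q]^V = \dom(F)$; it is a new branch. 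What you have shown is that $\dot f$ equals the \emph{re-computed} function $F^{V[G]}$ evaluated at a new point, and $F^{V[G]}(\dot b_G)$ --- the composition of the ground-model labelling $h$ with the new sequence $\langle t_n^{\dot b_G} \such n<\omega\rangle$ --- has no reason to lie in $F''([q]^V)$, nor in $V$ at all. Consequently the inequality $\bigvee_{g \in \rge F}\Vert\dot f = \check g\Vert \geq q$ is exactly the assertion to be proved, not a consequence of completeness. A tell-tale sign is that your argument never uses the hypothesis $\nu^\omega < \kappa$, yet the Remark following Theorem~\ref{answer} shows the theorem fails without it: if $\nu^\omega \geq \kappa$, the generic branch codes a new function from $\omega$ to $\nu$, and your argument would apply verbatim to a name for that function.

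The missing content is precisely the Claim inside the paper's proof of Theorem~\ref{prikry-distributivity}: after building $q$, one must exhibit a \emph{single} $g$ such that the set $q(g)$ of nodes of $q$ consistent with $\dot f = \check g$ contains a perfect subtree, since only such a subtree is a condition forcing $\dot f = \check g$. This is where $\nu^\omega < \kappa$ enters: if every $q(g)$ failed to be perfect, Proposition~\ref{imperfect_antichains} would give, for each of the at most $\nu^\omega$ many candidates $g$, an antichain $A_g$ of nodes with eventually small splitting, and one diagonalizes against all of them simultaneously once $\kappa_n > \nu^\omega$. Your pigeonhole (making the decided value constant on the retained successors of each $\Split_n$-node) does not remove the dependence of $h(n,t_n)$ on \emph{which} node of $\Split_n(q)$ the branch passes through, so it cannot substitute for this step.
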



The following proposition is what allows us to avoid a rank function.

\begin{proposition}\label{imperfect_antichains}  Suppose that $\kappa$ is the supremum of $\vec \kappa = \langle \kappa_\alpha \such \alpha < \delta \rangle$ where $\delta$ is an ordinal less than $\kappa$ and let $T \subset N(\vec \kappa)$ be a tree that has no perfect subtree. Then there is a maximal antichain $A= \langle t_\xi \such \xi < \Theta \rangle \subseteq T$ such that $\forall t \in A, \exists \alpha < \delta, \forall t' \sqsupseteq t, |\Succ_T(t')| < \kappa_\alpha$.\end{proposition}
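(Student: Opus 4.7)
The plan is to take $A$ to be the set of $\sqsubseteq$-minimal elements of the ``bounded'' set
\[ B := \{t \in T \such \exists \alpha < \delta,\ \forall t' \sqsupseteq t \text{ in } T,\ |\Succ_T(t')| < \kappa_\alpha\}. \]
Minimality makes the elements of $A$ pairwise $\sqsubseteq$-incomparable, and membership in $B$ gives each of them the required bounded-extension property. Thus the whole task reduces to showing that $A$ is \emph{maximal} as an antichain in $T$, i.e.\ that every $t \in T$ is $\sqsubseteq$-comparable with some $a \in A$.

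I argue maximality by contradiction. Suppose some $t_0 \in T$ is incomparable with every $a \in A$. For any $t \sqsupseteq t_0$ and any $a \in A$ comparable with $t$, one of $a \sqsubseteq t$ or $t \sqsubseteq a$ holds: in the first case $a$ and $t_0$ both lie on the chain of initial segments of $t$ and are therefore comparable, and in the second $t_0 \sqsubseteq t \sqsubseteq a$ directly---either way contradicting the choice of $t_0$. So every extension $t \sqsupseteq t_0$ in $T$ is also incomparable with $A$, and in particular no such $t$ lies in $B$: otherwise the $\sqsubseteq$-minimal element of $B$ below $t$ would belong to $A$ and be comparable with $t$. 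Thus every extension of $t_0$ in $T$ is \emph{unbounded}, i.e.\ it admits, for every $\alpha < \delta$, a $\kappa_\alpha$-splitting extension in $T$.

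I now use this unboundedness to build a perfect subtree of $T$ above $t_0$, contradicting the hypothesis. I proceed by transfinite recursion on $\sigma \in \bigcup_{\gamma < \delta} \prod_{\alpha < \gamma} \kappa_\alpha$: to $\sigma$ of length $\gamma$ I assign a node $t_\sigma \in T$ extending $t_0$ that is $\kappa_\gamma$-splitting in $T$, setting $t_\emptyset$ to be some $\kappa_0$-splitting extension of $t_0$ and at each successor stage picking $t_{\sigma \concat \langle \beta \rangle}$ to be a $\kappa_{\gamma+1}$-splitting extension of the immediate successor $t_\sigma \concat \langle \beta \rangle$, available because that node extends $t_0$ and so is unbounded. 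The downward closure of $\{t_\sigma\}$ together with the intermediate nodes on the chosen paths is then an ever-branching, $<\delta$-closed subtree of $T$---a perfect subtree, giving the desired contradiction.

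The main difficulty is the limit step of the recursion when $\delta > \omega$: at a limit $\gamma < \delta$ the union $\bigcup_{\beta < \gamma} t_{\sigma \rest \beta}$ need not belong to $T$, in which case no suitable $t_\sigma$ exists and the constructed subtree acquires a branch of length $<\delta$, breaking $<\delta$-closure. In the application to Theorem~\ref{prikry-distributivity} only $\delta = \omega$ arises, so no limit step appears and the plan above is complete. For general $\delta$ one handles the limit stage with extra care, for example by first passing to an enlargement of $T$ closed under chain-unions of length $<\delta$ and verifying that the non-existence of a perfect subtree persists there.
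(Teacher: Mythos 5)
Your argument is essentially the paper's: the paper builds the antichain by a greedy transfinite recursion and justifies each step by observing that otherwise the set of nodes incomparable to the antichain-so-far would downward-close to a perfect subtree; you package the same dichotomy as ``let $A$ be the $\sqsubseteq$-minimal elements of the upward-closed set $B$ of bounded nodes, and if $B$ is not dense then the unbounded nodes above a witness $t_0$ yield a perfect subtree.'' Your reduction is correct (predecessors of a node are well-ordered, so minimal elements of $B$ exist below every element of $B$, and your comparability argument for extensions of $t_0$ is fine), and for $\delta=\omega$ the recursive construction of the perfect subtree goes through with no issue, which is all that Theorem~\ref{prikry-distributivity} needs.

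The limit-stage problem you flag for $\delta>\omega$ is, however, not a technicality to be finessed: the proposition as literally stated is \emph{false} there without a closure hypothesis on $T$. Take $\delta=\omega_1$ and let $T$ be the tree of all $t\in N(\vec\kappa)$ with $t(\beta)=0$ for all but finitely many $\beta$. Every node of $T$ has a $\kappa_\alpha$-splitting extension for every $\alpha<\omega_1$ (extend by zeros), so $B=\emptyset$ and no maximal antichain of bounded nodes exists; yet $T$ has no perfect subtree, since any ever-branching subtree contains an increasing $\omega$-chain whose members acquire new nonzero coordinates, and the union of that chain is a branch of countable length. This also shows your proposed repair cannot work: non-existence of perfect subtrees does \emph{not} persist when one passes to a chain-closed enlargement of $T$ (the enlargement of this $T$ is all of $N(\vec\kappa)$). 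The correct fix, and the one implicitly in force wherever the paper uses the proposition with uncountable $\delta$ (Lemma~\ref{perfect_equivalence} explicitly assumes $T$ has no branches of length $<\delta$), is to add that hypothesis: then the union of a chain of length $<\delta$ in $T$ must itself lie in $T$ (otherwise it would be a short branch), it extends $t_0$ and is therefore unbounded, and the recursion closes at limits. You should state that hypothesis rather than gesture at an enlargement; note that the paper's own one-line justification has exactly the same unstated dependence.
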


\begin{proof}  We choose elements inductively: Suppose $\langle t_\xi \such \xi < \eta \rangle$ has been defined and the sequence is not already a maximal antichain. Then we can find some $t_\eta$ incomparable with $t_\xi$ for $\xi<\eta$ that has the desired property. Otherwise, the downwards closure of $\{s \in T \such \forall \xi<\eta, t_\xi \perp s\}$ would be a perfect subtree of $T$.\end{proof}

Now we can prove Theorem~\ref{prikry-distributivity}.

\begin{proof} Let $p \Vdash ``\dot f: \omega \to \nu$''. We will find some $q \le p$ and $g: \omega \to \nu$ such that $q \Vdash ``\dot f = \check g$''.

We define antichains $A_n$ for $n<\omega$ by induction on $n<\omega$. At the same time, we define sets $\{q_s :s \in \bigcup_{n<\omega}A_n\}$ and $\{\alpha_s : s \in \bigcup_{n<\omega}A_n\}$ by induction on the length of $s$ such that the following hold:

\begin{enumerate}[(i)]

\item All elements of $A_n$ lie below elements of $A_{n+1}$.

\item If $s_1 \sqsubseteq s_2$, then $q_{s_2} \le q_{s_1}$.

\item If $s \in A_n$, then $q_s \Vdash ``\dot{f}(n) = \alpha_s$''.

\item $s \sqsubseteq \Stem(q_s)$.

\item If $s \in A_n$, then $\Stem(q_s)$ is a $\kappa_n$-splitting node.

\end{enumerate}

The construction goes as follows: Let $q_{\emptyset} \le p$ be a condition deciding $\dot{f}(0)$ such that if $s = \Stem(q_{\emptyset})$ then $s$ is $\kappa_0$-splitting. Then let $A_0 = \{s\}$. Now suppose $A_n$ has been defined. Given $s \in A_n$, for each successor $t$ of $\Stem(q_s)$, choose $q_t \le q_s$ and $\alpha_t$ such that $t \sqsubseteq \Stem(q_t)$, $q_t \Vdash ``\dot{f}(n+1) = \alpha_t$'', and $\Stem(q_t)$ is a $\kappa_{n+1}$-splitting node. Then collect all such $t$'s and include them in $A_{n+1}$.

Let $q$ be the downwards closure of $\bigcup_{n<\omega}A_n$, which is a perfect tree and hence a condition in $\M_\kappa$. For any $g: \omega \to \nu$, we define $q(g) \le q$ (not necessarily perfect) as follows: If $\bar \beta = \langle \beta_0,\ldots,\beta_n \rangle \in \nu^{<\omega}$, let $q(\bar \beta) = \bigcup \{q_s: s \in A_n, \bar{\beta} = \langle \alpha_\emptyset,\ldots,\alpha_{s \rest k},\ldots,\alpha_s \rangle \}$ and $q(g) = \bigcap_{n<\omega}q(g \rest n)$. It is immediate that $q(g) \Vdash ``\dot f = \check g$'' \emph{if} $q(g)$ is a condition in $\M_\kappa$.

\textbf{Claim:} There is some $g: \nu \to \omega$ such that $q(g)$ is a perfect tree.

To prove the claim, suppose otherwise for contradiction. Then for each $g: \omega \to \nu$, there is an antichain $A_g \subset q$ given by Proposition~\ref{imperfect_antichains} such that for all $t \in A_g, \exists n_g^t$ such that $\forall t' \sqsupseteq t$, $t'$ is $< \! \kappa_{n_g^t}$-splitting.

Now construct a $\sqsubset$-increasing sequence $\langle s_n \such n < \omega \rangle$ where $s_n \in A_n$ for each $n$ as follows: Make arbitrary choices until we have $\kappa_{n} > \nu^\omega$. Then suppose we are given $s_n$ and let $t =\Stem(q_{s_n})$. Then the set
\begin{equation*}
 \{s \in \Succ_q(t) \such \exists g \colon \omega \to \nu, t \in A_g, t \sqsubseteq s, n_g^t \le n \}
 \end{equation*}
\noindent has cardinality $< \! \kappa_n$ while $|\Succ_q(t)|=\kappa_n$. So choose $s_{n+1} \in A_{n+1}$ outside of this set.

Now let $g$ be given by $g(n) = \alpha_{s_n}$ for $n<\omega$. There is some $t \in A_g$ and some $n$ such that $s_n \sqsupseteq t$. But we have a contradiction when we consider $s_m$ such that $m \ge n, n_g^t$. Hence we have proved the claim.

If $g: \nu \to \omega$ witnesses the claim, then $q(g) \in \M_\kappa$ and $q(g) \Vdash ``\dot f = \check g$.\end{proof}

This style of argumentation works for other tree forcings for singulars of countable cofinality. In particular, we obtain a new proof of the same result for the forcing $\Laver_\kappa^\textup{stat}$ studied by Cummings and Magidor. Previous arguments, like the ones in \cite[Ch.~XI, Section 3, especially 3.3., 3.4 and 3.6]{Sh:b} and \cite[Theorem 2.3]{BC},  used an intermediate step in which regular trees are sought (see Theorem 2.2 in \cite{BC}).
A tree $p$ is called regular if there is a set $\{\ell_n \such n < \omega \}\subseteq \omega$, such that for any $n$, any $t \in p(\ell_n)$ is $\kappa_n$-splitting.

\begin{theorem}\label{Laver_namba} 
If $\cf(\kappa) = \omega < \kappa$ and $\kappa$ is an $\omega$-strong limit, then $\Laver_\kappa^\textup{stat}$ is $(\omega,\nu)$-distributive for all $\nu<\kappa$.\end{theorem}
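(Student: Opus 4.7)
The plan is to parallel the proof of Prikry's theorem (Theorem~\ref{prikry-distributivity}) with modifications to accommodate the stronger Laver-stat splitting requirement. Fix $p \in \Laver^\textup{stat}_\kappa$ with $p \Vdash \dot f : \omega \to \nu$ for $\nu < \kappa$; the goal is to produce $q \le p$ in $\Laver^\textup{stat}_\kappa$ and $g : \omega \to \nu$ with $q \Vdash \dot f = \check g$.

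First I would establish a Laver-stat analog of Proposition~\ref{imperfect_antichains}: if $T$ is a Laver-style tree above a stem $s_*$ (every $t \sqsupseteq s_*$ in $T$ has $|\OSucc_T(t)| = \kappa_{|t|}$) but contains no $\Laver^\textup{stat}_\kappa$-subcondition rooted above $s_*$, then there is a maximal antichain $A \subseteq T$ above $s_*$ such that $\OSucc_T(t)$ is non-stationary in $\{\beta < \kappa_{|t|} : \cf\beta = \omega\}$ for each $t \in A$. The proof transfinitely extracts antichain elements witnessing non-stationary splitting; if the process terminated before yielding a maximal antichain, the complement would constitute a $\Laver^\textup{stat}$-subcondition.

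Next I would construct $q \le p$ by a Prikry-style construction carrying labels $\alpha_s$ along antichains $A_n \subseteq q$. After an initial stem extension so that $\nu < \kappa_{|\Stem(q_\emptyset)|}$ and $q_\emptyset$ decides $\dot f(0) = \alpha_\emptyset$, at each stage and each $s \in A_n$ (carrying a condition $q_s \le p$ with $\Stem(q_s) = s$ and $q_s \Vdash \dot f(n) = \alpha_s$), for each $\beta \in \OSucc_{q_s}(s)$ select $q^{s,\beta} \le q_s \rest (s \concat \langle \beta \rangle)$ deciding $\dot f(n+1)$ with value $\gamma_{s,\beta}$. The crucial new ingredient is a Fodor-style pressing-down: the partition of the stationary set $\OSucc_{q_s}(s)$ by $\beta \mapsto \gamma_{s,\beta}$ into at most $\nu < \cf(\kappa_{|s|}) = \kappa_{|s|}$ classes yields, for some $\alpha^*_s \in \nu$, a stationary $S_s \subseteq \OSucc_{q_s}(s)$ on which $\gamma_{s,\beta} \equiv \alpha^*_s$. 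Then set $A_{n+1} = \{\Stem(q^{s,\beta}) : s \in A_n, \beta \in S_s\}$ with labels $\alpha_{\Stem(q^{s,\beta})} = \alpha^*_s$, ensuring that the stationary-splitting above each $s \in A_n$ is preserved.

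For each $g : \omega \to \nu$, define $q(g) \subseteq q$ as the subtree following labels consistent with $g$, exactly as in Prikry's proof. If $q(g) \in \Laver^\textup{stat}_\kappa$ for some $g$, then $q(g) \Vdash \dot f = \check g$ and we are done. Otherwise, the Laver-stat analog of Proposition~\ref{imperfect_antichains} yields for each $g$ a maximal antichain $A_g \subseteq q(g)$ with non-stationarity witnesses $n_g^t$. Build a $\sqsubset$-increasing sequence $\langle s_n \in A_n : n < \omega\rangle$ in $q$: at each stage with $\kappa_n > \nu^\omega$, the set of successors of $s_n$ in $A_{n+1}$ lying below some $t \in A_g$ with $n_g^t \le n$ has cardinality at most $\nu^\omega < \kappa_n$, hence is bounded in the regular cardinal $\kappa_n$ and so non-stationary. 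Since $\OSucc_q(s_n) \supseteq S_{s_n}$ is stationary, a stationary set of good choices of $s_{n+1}$ survives. The branch defines $g_*(n) = \alpha_{s_n}$ and yields a contradiction exactly as in Prikry's proof: it lies in $q(g_*)$, so by maximality it passes through some $t \in A_{g_*}$, and for $s_m$ with $m \ge n, n_{g_*}^t$, the bound on $|\Succ_{q(g_*)}(s_m)|$ contradicts the stationary splitting at $s_m$ in $q$.

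The main obstacle is the Laver-stat fusion step: unlike the Miller case, the requirement that stationary splitting persist at every node above the stem means the intermediate chain nodes inserted between $s \concat \langle \beta \rangle$ and $\Stem(q^{s,\beta})$ must be carefully handled, since they nominally have only one successor in $q$. The Fodor-style pressing-down is the critical new technical tool — it exploits stationarity, the regularity of $\kappa_n$, and the bound $\nu < \kappa_n$ to extract uniform decisions, which is precisely what allows the Prikry-style counting argument to survive the transition from cardinality to stationarity.
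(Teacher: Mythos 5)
Your overall strategy (labelled antichains, the subtrees $q(g)$, a non-stationarity analog of Proposition~\ref{imperfect_antichains}, and a diagonalization against all $g:\omega\to\nu$) is the right shape and close to the paper's, but there is a genuine gap at the point you yourself flag as ``the main obstacle'': the tree $q$ you build is not a condition in $\Laver^\textup{stat}_\kappa$. When you choose $q^{s,\beta}\le q_s\rest(s\concat\langle\beta\rangle)$ deciding $\dot f(n+1)$, nothing prevents $\Stem(q^{s,\beta})$ from being a proper extension of $s\concat\langle\beta\rangle$, and the nodes strictly between them have exactly one successor in $q$. In a Laver-style condition \emph{every} node above the stem must split stationarily, so these chain segments are not a bookkeeping nuisance but a violation of the definition; every subtree of $q$, in particular every $q(g)$, inherits them, so the conclusion $q(g)\Vdash\dot f=\check g$ for a \emph{condition} $q(g)$ is never reached. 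Your Fodor-style pressing-down does not repair this: it makes the decided values $\gamma_{s,\beta}$ constant on a stationary set, but it does not shorten the stems of the $q^{s,\beta}$. The missing ingredient is the pure decision property for $\Laver^\textup{stat}_\kappa$ (Fact~\ref{strong-decision}, from Cummings--Magidor): if the stem of $p$ has length at least $n$ and $\dot\gamma$ is forced to be an ordinal below some $\mu<\kappa_n$, then some $q\le_0 p$ (same stem) decides $\dot\gamma$. This is exactly what lets one label the nodes without ever extending a stem; proving it amounts to iterating your pressing-down through all levels via a fusion argument, which your sketch does not carry out.

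With Fact~\ref{strong-decision} in hand, the paper's proof labels every node $s$ of a single tree $q$ (so the $A_n$ are just levels and there are no intermediate chains), and then handles the non-stationarity threat differently from you: instead of diagonalizing along one branch, it prunes $q$ once and for all to a tree $r$ by deleting at each node $t$ the set of successors that are bad for \emph{some} $g$, which is non-stationary because it is a union of $\nu^\omega<\kappa_n$ many non-stationary sets and the non-stationary ideal on $\kappa_n$ is $\kappa_n$-complete; a branch of $r$ then yields the contradiction. Your branch-by-branch diagonalization could probably be made to work in place of that last step, but without pure decision the construction never produces a legal condition to diagonalize inside.
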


We will use an important aspect of the Cummings-Magidor arguments, which to some extent derives from Laver's original forcing \cite{Laver-1976}.

\begin{fact}[See, e.g., {\cite[Fact one, page 3341]{CummingsMagidor2010}}]\label{strong-decision} If $\cf(\kappa) = \omega$, $\Laver_\kappa^\textup{stat}$ is defined on the product $N(\vec \kappa)$, $p \in \Laver_\kappa^\textup{stat}$ has a stem of length at least $n$, and $p$ forces that $\dot \gamma$ is an ordinal below $\mu$ for some $\mu<\kappa_n$, then there is some $q \le_0 p$ such that $q$ decides a value for $\dot \gamma$.\end{fact}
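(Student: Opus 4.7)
The plan is to adapt the argument of Theorem~\ref{prikry-distributivity} to $\Laver_\kappa^\textup{stat}$, with Fact~\ref{strong-decision} playing the role of the ad hoc decision step used in the Miller case. Fix $p \in \Laver_\kappa^\textup{stat}$ and a name $\dot f$ with $p \Vdash \dot f \colon \omega \to \check\nu$ and $\nu < \kappa$. Using the $\omega$-strong limit hypothesis, I choose $\ell_0$ with $\kappa_{\ell_0} > \nu^\omega$ and strengthen $p$ so that $\Stem(p) = s_0$ has length $\ell_0$. Because $\Split_n(p) = p(\dom(\Stem(p)) + n)$ in $\Laver_\kappa^\textup{stat}$, the fusion bookkeeping is clean: $p_{n+1} \le_n p_n$ means exactly that the two trees agree through level $\ell_0 + n$.

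I build a fusion sequence $\langle p_n : n < \omega\rangle$ with $p_0 = p$ and $p_{n+1} \le_n p_n$ as follows. Given $p_n$, for each $t \in p_n(\ell_0 + n)$, the condition $p_n|t$ has stem $t$ of length $\ell_0 + n \geq \ell_0$ and forces $\dot f(n) < \check\nu < \check\kappa_{\ell_0 + n}$, so Fact~\ref{strong-decision} yields $q_t \le_0 p_n|t$ with $q_t \Vdash \dot f(n) = \check\alpha^n_t$ for some $\alpha^n_t < \nu$. Assemble $p_{n+1}$ by replacing $p_n|t$ with $q_t$ for each $t \in p_n(\ell_0 + n)$ and leaving the structure below unchanged. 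By Lemma~\ref{fusion_lemma} the intersection $q = \bigcap_n p_n$ is a condition in $\Laver_\kappa^\textup{stat}$, and $q|t \Vdash \dot f(n) = \check\alpha^n_t$ for each $t \in q(\ell_0 + n)$.

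Now I imitate the closing cardinality argument of Theorem~\ref{prikry-distributivity}. For each $g \in \nu^\omega$ set $q(g) = \{t \in q : \forall m \le \dom(t) - \ell_0,\ \alpha^m_{t \restriction (\ell_0 + m)} = g(m)\}$. If some $q(g)$ contains a $\Laver_\kappa^\textup{stat}$ subcondition $q'$, then $q' \Vdash \dot f = \check g$ and the proof is complete. Suppose no such $g$ exists. A Laver-stat analog of Proposition~\ref{imperfect_antichains} is then needed: iterating the pruning $T \mapsto \{t \in T : \OSucc_T(t)\text{ stationary in }\kappa_{\dom(t)}\}$ starting from $q(g)$ terminates at $\emptyset$, so every node of $q(g)$ acquires a defect stage. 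From this data I extract a maximal antichain $A_g \subseteq q(g)$ of $\sqsubset$-minimal ``culprit'' nodes $t$ together with integers $n^t_g < \omega$ marking the stage at which $\OSucc$ above $t$ first fails stationarity modulo $g$-agreement. Prikry's endgame then applies: inductively pick $\sqsubset$-increasing $s_n \in q(\ell_0 + n)$ so that $s_{n+1} \in \OSucc_q(s_n)$ lies outside $\bigcup \{D_{g,t} : g \in \nu^\omega,\ t \in A_g,\ t \sqsubseteq s_n,\ n^t_g \le n\}$, where each $D_{g,t} \subseteq \OSucc_q(s_n)$ is the non-stationary set of continuations witnessing the $(g,t)$-defect. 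This union has at most $\nu^\omega < \kappa_{\ell_0 + n}$ components, so by $\kappa_{\ell_0 + n}$-completeness of $\textup{NS}_{\kappa_{\ell_0 + n}}$ its complement remains stationary. Setting $g^\ast(n) = \alpha^n_{s_n}$, the sequence $\langle s_n\rangle$ is a branch of $q(g^\ast)$ that eludes $A_{g^\ast}$ cofinally, contradicting maximality of $A_{g^\ast}$.

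The main obstacle lies in formulating the Laver-stat analog of Proposition~\ref{imperfect_antichains}: unlike Miller's ``no perfect subtree'' dichotomy, captured by a single-level cap on $|\Succ|$, the obstruction to possessing a Laver-stat subtree is iterated non-stationarity, and $A_g$ together with the labels $n^t_g$ must be extracted from a pruning rank rather than from a direct antichain construction. The $\omega$-strong limit hypothesis is invoked twice: at the outset to place $\Stem(p)$ at a level with $\kappa_{\ell_0} > \nu^\omega$, and again in the final combinatorial step to combine $\nu^\omega$-many non-stationary sets under the $\kappa_{\ell_0 + n}$-completeness of $\textup{NS}_{\kappa_{\ell_0 + n}}$.
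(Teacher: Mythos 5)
You have not proved the statement you were asked to prove. Fact~\ref{strong-decision} is the stem-preserving pure-decision lemma itself: given $p$ with stem of length $\ge n$ and a name $\dot\gamma$ for an ordinal below some $\mu<\kappa_n$, one must produce $q\le_0 p$ deciding $\dot\gamma$. Your proposal instead establishes the $(\omega,\nu)$-distributivity of $\Laver_\kappa^{\textup{stat}}$ (Theorem~\ref{Laver_namba}), and at the crucial step you invoke Fact~\ref{strong-decision} as a black box (``Fact~\ref{strong-decision} yields $q_t\le_0 p_n\rest t$ with $q_t\Vdash\dot f(n)=\check\alpha^n_t$''). As a proof of the Fact this is circular; as a proof of Theorem~\ref{Laver_namba} it is aimed at the wrong target. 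For the record, the paper does not prove this Fact either --- it is quoted from Cummings--Magidor --- so there is no internal argument to compare against, but your text cannot stand in for one.

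A genuine proof is a short pressing-down argument, not a fusion argument. Call a node $t\sqsupseteq\Stem(p)$ \emph{good} if some condition with stem $t$ below $p\rest t$ decides $\dot\gamma$. If $t$ is not good but stationarily many $\beta\in\OSucc_p(t)$ give good successors $t\concat\langle\beta\rangle$, pick for each such $\beta$ a deciding condition $q_\beta$ with stem $t\concat\langle\beta\rangle$ and decided value $\gamma_\beta<\mu$; since $\mu<\kappa_n\le\kappa_{\dom(t)}$ and the nonstationary ideal on $\kappa_{\dom(t)}$ is $\kappa_{\dom(t)}$-complete, the value is constant on a stationary set $S$, and $\bigcup_{\beta\in S}q_\beta$ is a condition with stem $t$ deciding $\dot\gamma$, contradicting that $t$ is not good. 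Hence if $\Stem(p)$ were not good, the non-good nodes above it would form a condition $q'\le_0 p$ none of whose nodes is good; but any $r\le q'$ deciding $\dot\gamma$ has a stem that is a good node of $q'$, a contradiction. So $\Stem(p)$ is good, which is exactly the conclusion. This is where the hypotheses $\mu<\kappa_n$ and the stationary splitting actually do their work; your proposal only uses them downstream, inside the distributivity argument that the Fact is meant to enable.
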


\begin{proof}[Proof of Theorem~\ref{Laver_namba}] Suppose $p \Vdash ``f : \omega \to \nu$'' and use Fact~\ref{strong-decision} to assume without loss of generality that $p$ has a stem of length $m$ where $\nu^\omega<\kappa_m$ and $p$ decides $\dot{f}(n)$ for all $n \le m$.

We define a a condition $q \le p$ as well sets $\{q_s : s \in q \}$ and $\{ \alpha_s : s \in q \}$ with the following properties:

\begin{enumerate}[(i)]

\item $\forall s$, $q_{s {}^\frown \langle \alpha \rangle} \le_0 q_s \rest
 (s {}^\frown \langle \alpha \rangle)$.

\item If $s_1 \sqsubseteq s_2$ then $q_{s_2} \le q_{s_1}$.

\item If $|s|=n$ then $q_s \Vdash \dot{f}(n) = \alpha_s$.

\end{enumerate}

This construction is a more controlled analog of the one in the proof of Theorem~\ref{prikry-distributivity}, and it works using Fact~\ref{strong-decision}. We also define $q(g)$ for $g: \omega \to \nu$ as in the proof of Theorem~\ref{prikry-distributivity}. The condition $q \le p$ will be the union of all of the $s$'s.

We construct a tree $r \le q$ by induction on the length of nodes $t$ as follows: $r$ has the same stem as $q$, and if we have established that $t \in r$ and $|t| = n$, then include $t {}^\frown \langle \alpha \rangle \in r$ if and only if $\alpha$ is \emph{not} in the set
\begin{align*}
\{\beta < \kappa_{n} : & \exists g : \omega \to \nu, t \in q(g) \text{ but has a non-stationary}\\ & \text{set of successors in } q(g),\text{ and }\beta \in \OSucc_{q(g)}(t)\},\end{align*}
\noindent which is non-stationary.

We claim that there is some $g$ such that $q(g) \cap r$ is a condition in $\Laver_\kappa^\textup{stat}$. Suppose otherwise. Then for every $t \in r$, $t$ fails to be a stem of $q(g) \cap r$. This means that there is some $s \sqsupseteq t$ such that $|s|=n$ for some $n$ and the set of successors of $s$ in $q(g) \cap r$ is non-stationary in $\kappa_{n}$, and so $\Succ_{(r \rest s) \cap q(g)}(s) = \emptyset$. For each $g:\omega \to \nu$, we thus build maximal antichains $A_g \subseteq r$ using the idea of Proposition~\ref{imperfect_antichains} such that for all $s \in A_g$, $\Succ_{(r \rest s) \cap q(g)}(s) = \emptyset$. Finally, let $b$ be any cofinal branch of $r$ and let $h : n \mapsto \alpha_{b \rest n}$. Then there is some $t \in b$ and some $s \in A_h$ such that $s \subseteq t$, which is a contradiction because this implies that $t \notin r$.\end{proof}

\subsection{Minimality Without Measurables}
\label{Sub2.2}

As before, $\bP$ is $\Miller_\kappa$ where $\kappa$ is a singular of countable cofinality. We  prove the following theorem, which waives the assumption in Theorem 6.6 from \cite{DobrinenHathawayPrikry} that the
$\kappa_n$'s are measurable.


\begin{theorem}\label{waiving_meas} 
Suppose $p \in \bP$ and $p \Vdash \dot{A} \colon \omega \to \check{V} \wedge \dot{A} \not \in \check{V}$. Then $p \Vdash ``\Gamma(\bP) \in \check{V}[\dot{A}]$''. In other words, $\bP$ is  minimal for $\omega$-sequences.
\end{theorem}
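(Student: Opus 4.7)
It suffices to show $p \Vdash g_G \in V[\dot A]$: Proposition~\ref{generic-branch-prop} gives $V[g_G] = V[G]$, and any transitive $M$ with $V \subseteq M \subseteq V[G]$ and $\dot A[G] \in M$ then contains $g_G$ and equals $V[G]$. Encoding values as rank ordinals, we may assume $\dot A$ is a $\bP$-name for a function $\omega \to \On$. By Theorem~\ref{prikry-distributivity}, $\bP$ is $(\omega, \nu)$-distributive for every $\nu < \kappa$, so the hypothesis $p \Vdash \dot A \notin \check V$ forbids any $q \le p$ from forcing $\rge(\dot A) \subseteq \check \nu$ for a $\nu < \kappa$; otherwise $q$ would already force $\dot A \in \check V$.

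\textbf{Separation and fusion.} The technical core is a \emph{separation lemma}: for every $q \le p$ and every $\kappa_n$-splitting node $t \in q$, there exist $m \in \omega$, a set $B \subseteq \OSucc_q(t)$ of size $\kappa_n$, and conditions $q_\beta \le q \rest t \concat \langle \beta \rangle$ for $\beta \in B$ such that $q_\beta \Vdash \dot A(m) = \check a_\beta$ with the values $a_\beta$ pairwise distinct. The idea is a diagonal pigeonhole powered by the unboundedness above: for each $\nu < \kappa$, the set of $\beta \in \OSucc_q(t)$ admitting an extension that forces $\dot A(m) \ge \nu$ for some $m$ has size $\kappa_n$ (else one restricts $q \rest t$ to a sub-condition forcing $\rge(\dot A) \subseteq \check\nu$, contradicting the previous paragraph); and the $\omega$-strong-limit hypothesis $\nu^\omega < \kappa$ means only $\nu^\omega$ many complete $\omega$-sequences of $\dot A$-values lie below $\nu$, so $\kappa_n$-many successors cannot all force a single common $\omega$-fingerprint without forcing $\dot A$ into a fixed element of $V$. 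Applying the separation lemma iteratively, I would build a fusion sequence $\langle p_n : n < \omega \rangle$ with $p_{n+1} \le_n p_n$, attaching to each $t \in \Split_n(p_n)$ a label $(m^t, a^t) \in \omega \times \On$ such that $p_n \rest t \Vdash \dot A(m^t) = \check a^t$, and arranging that for each $s \in \Split_{n-1}(p_n)$ the map $t \mapsto (m^t, a^t)$ is injective on $\{t \in \Split_n(p_n) : t \sqsupset s\}$. Lemma~\ref{fusion_lemma} then yields $q = \bigcap_n p_n \in \bP$ with $\Split_n(q) = \Split_n(p_n)$ for all $n$.

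\textbf{Recovery and main obstacle.} Assuming $q \in G$, the generic branch $g_G \in [q]$ passes through a unique $t_n \in \Split_n(q)$ at each level $n$; by injectivity of the labels, $t_n$ is characterized in $V[\dot A[G]]$ as the unique extension of $t_{n-1}$ in $\Split_n(q)$ with $\dot A[G](m^{t_n}) = a^{t_n}$. Hence $g_G = \bigcup_n t_n$ is recoverable in $V$ from $\dot A[G]$ and the $V$-parameter $(q, \langle (m^t, a^t) : n<\omega, t \in \Split_n(q)\rangle)$, so $g_G \in V[\dot A[G]]$, completing the proof. The main obstacle is the separation lemma: \cite{DobrinenHathawayPrikry} obtains the analogous separation through a Rowbottom-style homogenization using a normal measure on a measurable $\kappa_n$, whereas the present approach must extract separation purely combinatorially from the $(\omega,\nu)$-distributivity of the preceding subsection together with the $\omega$-strong-limit hypothesis---this is precisely the motivation for reproving Prikry's theorem in Section~\ref{countable-sec}.
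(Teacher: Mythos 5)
Your proposal is correct and follows essentially the same route as the paper: your separation lemma is the paper's Lemma~\ref{sharp-node-miller} (the notion of $\dot A$-sharp splitting nodes), proved by the same mechanism --- successive extensions forcing fresh values of $\dot A$, justified via the $(\omega,\nu)$-distributivity of Theorem~\ref{prikry-distributivity} in place of the measurability used by Dobrinen--Hathaway--Prikry, followed by a $\cf(\kappa_n)>\omega$ pigeonhole to fix the finite stage $m$ --- and your fusion-plus-branch-recovery step is exactly the paper's combination of Lemma~\ref{lemma_6.2_DHP} with Proposition~\ref{generic-branch-prop}. The only cosmetic difference is that you label the siblings above a splitting node by a single coordinate value $\dot A(m)$ rather than by pairwise incompatible finite initial segments $\psi_p(c)$, which is an equivalent (and equally attainable) form of sharpness.
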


Here we use $\Gamma(\bP)$ to refer to the canonical $\bP$-name for a $\bP$-generic filter over $V$.

\begin{definition}[See {\cite[Definition 6.1]{DobrinenHathawayPrikry}} and {\cite[Lemma 11]{Brown-Groszek}}]\label{def_A-sharp} Let $\bP$ be either $\Miller_\kappa$ or $\Laver_\kappa$ for a singular $\kappa$. Let $\dot{A}$ be a $\bP$-name such that $1_{\bP} \Vdash \dot{A} \colon \omega \to \check{V} \wedge \dot{A} \not\in \check{V}$. For each condition $p \in \bP$ we let $\psi_p \colon p \to {}^{<\omega} V$ be the function that assigns to each node 
$t \in p$ the longest sequence $s = \psi_p(t)$ such that 
$p \rest t \Vdash \check{s} \sqsubseteq \dot{A}$.

Given $q \le p$, a splitting node $t \in \Split(q)$ is called \emph{$\dot{A}$-sharp} (with respect to $q$) if 
$\{\psi_p(c)\such c \in \Succ_q(t) \}$ consists of pairwise incompatible finite sequences. A condition $q \le p$ is $\dot A$-sharp if all of its splitting nodes are $\dot A$-sharp.
\end{definition}

\begin{lemma}[{\cite[Lemma 9]{Brown-Groszek}} or {\cite[Lemma 6.2]{DobrinenHathawayPrikry}}]\label{lemma_6.2_DHP} For $\bP = \M_\kappa$ or $\bP = \Laver_\kappa$ for $\omega = \cf(\kappa) < \kappa$, suppose that  $p \Vdash \dot{A} \colon \omega \to \check{V} \wedge \dot{A} \not \in \check{V}$ and that each splitting node of $p$ is $\dot{A}$-sharp.
Then $p \Vdash \Gamma(\bP) \in \check{V}[\dot{A}]$.
\end{lemma}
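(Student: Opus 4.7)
The plan is to exploit $\dot A$-sharpness to reconstruct the generic branch $g_G$, and hence the generic filter $G$, inside $V[A]$, where $A = \dot A[G]$. For $\lambda = \omega$ the $(<\!\lambda)$-closure hypothesis of Proposition~\ref{generic-branch-prop} is vacuous, so both $\Miller_\kappa$ and $\Laver_\kappa$ admit a generic branch with $V[g_G] = V[G]$; it therefore suffices to place the branch $b := g_G$ in $V[A]$.

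Working in $V[G]$, I would recursively enumerate the splitting nodes of $p$ lying on $b$ as $\langle t_n \such n < \omega \rangle$, starting with $t_0 = \Stem(p) \in V$. Granting that $t_n$ has already been identified inside $V[A]$, the step is to locate $t_{n+1}$ using $A$. Let $c \in \Succ_p(t_n)$ be the unique immediate successor of $t_n$ with $c \sqsubseteq b$. Since $c \sqsubseteq b$ one has $p \rest c \in G$, so by the definition of $\psi_p$ the condition $p \rest c$ forces $\check{\psi_p(c)} \sqsubseteq \dot A$, whence $\psi_p(c) \sqsubseteq A$. By $\dot A$-sharpness, for every other $c' \in \Succ_p(t_n)$ the finite sequence $\psi_p(c') \in V$ is incompatible with $\psi_p(c)$, and therefore cannot itself be an initial segment of $A$. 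Because $\Succ_p(t_n)$ and the restriction $\psi_p \rest \Succ_p(t_n)$ both lie in $V$, this characterizes $c$ inside $V[A]$ as the unique member of $\Succ_p(t_n)$ whose $\psi_p$-image is an initial segment of $A$. The next splitting node $t_{n+1}$ is then read off $c$ in $V$ by following the non-splitting initial segment of $p$ above $c$ to its next branching point.

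Since $p$ is perfect (or Laver), every node of $p$ lies below some splitting node of $p$, so $\langle t_n \such n<\omega\rangle$ is cofinal in $b$ and $b = \bigcup_{n<\omega} t_n \in V[A]$. Finally $G = \{q \in \bP \such b \in [q]\}$ is computed from $b$ inside $V[A]$, yielding $p \Vdash \Gamma(\bP) \in \check V[\dot A]$. The one step I expect to require care is the successor-selection claim, namely that $\dot A$-sharpness actually distinguishes $c$ from every rival $c' \in \Succ_p(t_n)$; however, this reduces to the elementary observation that two pairwise incompatible finite sequences cannot both be initial segments of the single sequence $A$, so neither a fusion argument nor a rank function is required.
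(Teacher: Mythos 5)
Your proof is correct and follows exactly the route the paper indicates for Lemma~\ref{lemma_6.2_DHP} (which it cites from Brown--Groszek and Dobrinen--Hathaway--Prikry rather than proving in full): reconstruct the generic branch level by level inside $V[A]$ by selecting, at each $\dot A$-sharp splitting node, the unique immediate successor $c$ with $\psi_p(c)\sqsubseteq A$, and then recover $G$ from the branch via Proposition~\ref{generic-branch-prop}, whose closure hypothesis is indeed vacuous for $\lambda=\omega$. The delicate points --- that $p\rest c\in G$ for every $c\sqsubseteq g_G$, and that incompatible finite sequences cannot both be initial segments of $A$ --- are handled correctly.
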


The basic idea of Lemma~\ref{lemma_6.2_DHP} is that the generic sequence, and therefore the generic filter, can be reconstructed from $\dot A_G$ using Proposition~\ref{generic-branch-prop} by determining a condition with the sharp coding.


Thanks to Lemma~\ref{lemma_6.2_DHP}, Theorem~\ref{waiving_meas} is implied by the following lemma.

\begin{lemma}\label{sharp_dense} Suppose that  $p \Vdash \dot{A} \colon \omega \to \check{V} \wedge \dot{A} \not \in \check{V}$. Then there are densely many $q \leq p$ in $\Miller_\kappa$ that are $\dot{A}$-sharp.
\end {lemma}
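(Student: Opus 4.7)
The plan is to build a $\dot A$-sharp $q \leq p'$ by an $\omega$-step Miller fusion. The essential combinatorial ingredient is the following \textbf{Sharpening Claim}: for any $r \leq p$ forcing $\dot A \notin \check V$, any $t \in r$, and any $m < \omega$, there is $r^* \leq r \rest t$ whose stem $t^* \sqsupseteq t$ is $\kappa_m$-splitting and $\dot A$-sharp in $r^*$. Assuming the Claim, starting from $p_0 = p'$ one inductively defines $p_{n+1} \leq_n p_n$ by applying the Claim (with $m = n+1$) inside $p_n \rest c$ for each immediate successor $c$ of each node of $\Split_n(p_n)$ and gluing above level $n$. Lemma~\ref{fusion_lemma} then gives $q = \bigcap_n p_n \in \Miller_\kappa$ with $\Split_n(q) = \Split_n(p_n)$, each node of which is $\dot A$-sharp in $q$ (since extending pairwise incompatible $\psi$-values preserves incompatibility).

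To prove the Sharpening Claim, I would first establish the following fact: for any $r \leq p$ forcing $\dot A \notin \check V$, any $t \in r$, and any $\mu < \kappa$, the tree
\[ \Psi_{r,t} = \{s \in {}^{<\omega}V : (\exists d \in r)(d \sqsupseteq t \wedge s \sqsubseteq \psi_r(d))\} \]
contains a pairwise incompatible subset of size $\geq \mu$. Supposing otherwise, the maximum such subset $\Sigma = \{\sigma_i : i < \nu\}$ has $\nu < \mu$, and this bound propagates to $\Psi_{r, t'}$ for every $t' \sqsupseteq t$ in $r$ since $\Psi_{r, t'} \subseteq \Psi_{r, t}$. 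Maximality together with the fact that $\psi$-values can be extended arbitrarily long (as $\dot A$ is forced to be total) yields $r \rest t \Vdash (\exists i < \nu)(\sigma_i \sqsubseteq \dot A)$. Iterating: for each $i$ pick $d_i \sqsupseteq t$ in $r$ with $\psi_r(d_i) \sqsupseteq \sigma_i$, then in $\Psi_{r, d_i}$ take a maximal antichain of $\psi$-values extending $\psi_r(d_i)$ of length at least $2$, and continue, arranging that at stage $n$ the sequences have length $\geq n$. After $\omega$ stages each cofinal path $\bar\iota$ through the resulting $<\mu$-branching tree yields an infinite sequence $\tau_{\bar\iota} \in V$, and there are at most $\mu^\omega < \kappa$ such paths by the $\omega$-strong limit hypothesis on $\kappa$. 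Completeness of $\ro(\Miller_\kappa)$ together with iterated density arguments yields $r \rest t \Vdash \dot A \in \check S$, where $S = \{\tau_{\bar\iota}\} \in V$ is a set of $V$-sequences. Hence $r \rest t \Vdash \dot A \in \check V$, contradicting our hypothesis.

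Applying the fact with $\mu = \kappa_m$ produces $\kappa_m$ many pairwise incompatible $\sigma_\gamma$ with witnessing $d_\gamma \in r$ above $t$ satisfying $\psi_r(d_\gamma) \sqsupseteq \sigma_\gamma$; monotonicity of $\psi_r$ forces the $d_\gamma$ to be pairwise tree-incomparable. Consider the downward closure $T \subseteq r$ of $\{d_\gamma : \gamma < \kappa_m\}$. Starting from $t$, descend in $T$ by repeatedly moving to a child containing $\geq \kappa_m$ many $d_\gamma$ until reaching a node every child of which contains $<\kappa_m$ many; by regularity of $\kappa_m$ this stopping node $t^* \sqsupseteq t$ must have $\geq \kappa_m$ many children in $T$. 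Select $\kappa_m$ such children $c_\gamma$, reindexed so that one $d_\gamma$ lies above each $c_\gamma$, and define $r^*$ as the union of the path up to $t^*$, the linear paths $[c_\gamma, d_\gamma]$ for each $\gamma$, and the subtrees $r \rest d_\gamma$ below. Then $r^* \in \Miller_\kappa$ with stem $t^*$, and the inequality $\psi_{r^*}(c_\gamma) \sqsupseteq \psi_r(d_\gamma) \sqsupseteq \sigma_\gamma$ shows these values are pairwise incompatible. The main obstacle is the $\mu$-antichain fact: arranging the iteration so that sequence lengths diverge along every branch, and translating the infinite Boolean join in $\ro(\Miller_\kappa)$ into a forcing statement via density and completeness. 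The $\omega$-strong limit hypothesis is used crucially through $\mu^\omega < \kappa$.
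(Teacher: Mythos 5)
Your overall architecture is the same as the paper's: a local sharpening claim (find, above any node, a $\kappa_m$-splitting stem that is $\dot A$-sharp with respect to some refinement) followed by an $\omega$-step fusion, with the observation that sharpness survives the fusion because extending pairwise incompatible finite sequences preserves incompatibility. That outer shell is fine. The problem is in your proof of the Sharpening Claim, and it is a genuine gap: your entire argument for the antichain fact lives inside the \emph{fixed} tree $r$ and is driven by the assertion that the values $\psi_r(d')$ for $d' \sqsupseteq t$ in $r$ can be extended to arbitrary finite lengths ``as $\dot A$ is forced to be total.'' Totality of $\dot A$ only gives that densely many \emph{conditions} $q \leq r \rest d$ decide long initial segments of $\dot A$; such a $q$ is a refinement of $r \rest d$, not a restriction $r \rest d'$ of the original tree, and the family $\{r \rest d' \such d' \in r\}$ is not dense below $r$. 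Nothing forces any $r \rest d'$ to decide $\dot A(n_0)$ for $n_0 = |\psi_r(t)|$, so $\Psi_{r,t}$ may simply fail to contain long sequences, and the subsequent steps (building the $<\mu$-branching tree with sequence lengths diverging, and the conclusion $r \rest t \Vdash \dot A \in \check S$, which again tacitly identifies $\dot A_G$ with the union of the $\psi_r$-values along the generic branch) inherit the same unjustified premise. This is precisely the difficulty the paper's proof of Lemma~\ref{sharp-node-miller} is designed to circumvent: it does \emph{not} use $\psi$ of the ambient tree, but constructs genuinely new conditions $p_{\alpha,i} \leq q \rest t_\alpha$ below each of $\kappa_n$ successors, deciding finite pieces of $\rge(\dot A)$, and the sharp values are then read off from the union $q' = \bigcup_{\alpha \in B} p_{\alpha,m}$, so that $\psi_{q'}(t_\alpha)$ is computed from the refined condition.

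Two further points. First, the engine that makes the paper's construction work is Prikry's $(\omega,\nu)$-distributivity (Theorem~\ref{prikry-distributivity}): if at stage $\alpha$ one could not force a new element into $\rge(\dot A)$, then $\bigcup_\alpha q \rest t_\alpha$ would confine $\rge(\dot A)$ to a set of size $<\kappa_n$, and distributivity would put $\dot A$ in $V$; a pigeonhole using $\cf(\kappa_n)>\omega$ then stabilizes matters at a finite stage $m$, yielding $\kappa_n$ pairwise distinct range-sets of size $m$ and hence pairwise incompatible sequences. Your proposal never invokes the already-proved distributivity, and your substitute use of the $\omega$-strong limit hypothesis (counting $\mu^\omega$ paths) sits on top of the flawed step. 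If you repair your argument by replacing restrictions $r \rest d'$ with refinements throughout, you will find yourself essentially re-proving a case of Prikry's theorem; it is cleaner to cite it as the paper does. Second, a minor repairable point: your ``descend to a child with $\kappa_m$ many $d_\gamma$ above it until stuck'' procedure need not terminate; the correct argument is the counting one (if every node of the downward closure $T$ had fewer than $\kappa_m$ children, then $|T(n)|<\kappa_m$ for all $n$ by regularity and hence $|T|<\kappa_m$ since $\cf(\kappa_m)>\omega$, contradicting $|\{d_\gamma\}|=\kappa_m$).
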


Now we state our main lemma.

\begin{lemma}\label{sharp-node-miller} Given $q \le p$, $n \in \omega$, and $s \in q$, there is some $t \sqsupseteq s$ and $q' \le q$ such that $\Stem(q') = t$, $t$ is $\kappa_n$-splitting, and $t$ is $\dot A$-sharp with respect to $q'$.\end{lemma}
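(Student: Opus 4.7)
My plan is to first extend $s$ to a $\kappa_n$-splitting node $t\sqsupseteq s$ in $q$, which exists since $q$ is ever-branching, and then prune $q$ above $t$ so that the immediate successors carry pairwise incompatible $\psi_{q'}$-values. Enumerate $\Succ_q(t)=\{c_\alpha:\alpha<\mu\}$ with $\mu\ge\kappa_n$, and for each $c$ let $S_c:=\{\sigma\in V^{<\omega}:\exists r\le q\rest c,\ r\Vdash\sigma\sqsubseteq\dot A\}$ denote the tree of initial segments of $\dot A$ that can be forced below $q\rest c$. The hypothesis $p\Vdash\dot A\notin V$ propagates to each $q\rest c$, so a short argument shows that $S_c$ has dense splitting (above every node of $S_c$ there exist incompatible extensions) and that $\psi_q(c)$ is exactly the longest common initial segment of $S_c$. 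Since any refinement $r_c\le q\rest c$ satisfies $\psi_{r_c}(\Stem(r_c))\in S_c$, the task reduces to selecting $r_c$ for $c$ in some subset $C\subseteq\Succ_q(t)$ of size $\ge\kappa_n$ so that the sequences $\sigma_c:=\psi_{r_c}(\Stem(r_c))$ are pairwise incompatible in $V^{<\omega}$; the desired $q'$ is then the tree with stem $t$, successors $C$ at $t$, and $q'\rest c=r_c$ for $c\in C$.

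The selection itself is driven by a dichotomy iterated on a growing prefix $\tau\in V^{<\omega}$, beginning with $\tau=\emptyset$, on the set $C(\tau):=\{c\in\Succ_q(t):\tau\in S_c\}$. Either (I) we can assign to $\ge\kappa_n$-many $c\in C(\tau)$ pairwise distinct values $v_c\in V$ with $\tau\concat\langle v_c\rangle\in S_c$---in which case refining each $q\rest c$ to some $r_c$ forcing $\dot A(|\tau|)=v_c$ produces the desired pairwise incompatible sequences $\sigma_c\sqsupseteq\tau\concat\langle v_c\rangle$---or (II) only $<\kappa_n$ distinct values are available, so by the regularity of $\kappa_n$ there is a single value $v^*$ shared by $\ge\kappa_n$-many $c\in C(\tau)$, and we recurse on $\tau':=\tau\concat\langle v^*\rangle$ with $C(\tau')$.

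The main obstacle is to rule out that alternative (II) repeats for all $k<\omega$: were it to, we would obtain $\tau_\omega=\bigcup_k\tau_k\in V^\omega$ and a nested chain $C_0\supseteq C_1\supseteq\ldots$ of subsets of $\Succ_q(t)$, each of size $\ge\kappa_n$, with the property that for every $c\in C_k$ some refinement of $q\rest c$ forces $\tau_k\sqsubseteq\dot A$. My plan is to choose the $v^*$'s in (II) with care---for example, favoring choices that keep the sizes $|C_k|$ as large as possible via the regularity of $\kappa_n$---so that $\bigcap_k C_k$ retains cardinality $\ge\kappa_n$; the restricted condition $q^*\le q$ with $\Succ_{q^*}(t)=\bigcap_k C_k$ would then force $\tau_\omega\sqsubseteq\dot A$, whence $\dot A=\tau_\omega\in V$, contradicting $p\Vdash\dot A\notin V$. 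Managing this countable intersection in $\Miller_\kappa$---which is not countably closed---is the crux; if the direct combinatorial route proves intractable, an alternative is to first secure a continuous reading of $\dot A$ below $q$ via a fusion argument appealing to Theorem~\ref{prikry-distributivity}, and then to locate a $\kappa_n$-splitting node at which the continuous reduction already takes $\ge\kappa_n$ distinct values.
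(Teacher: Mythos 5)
Your setup---extending $s$ to a $\kappa_n$-splitting node $t$ and reducing the problem to choosing, for $\kappa_n$-many immediate successors $c$ of $t$, refinements $r_c \le q\rest c$ whose decided initial segments of $\dot A$ are pairwise incompatible---matches the paper, and your case (I) is sound. The gap is in ruling out that case (II) recurs for all $k<\omega$, and it is twofold. First, the contradiction you aim for does not follow from what you have: $\tau_k \in S_c$ only says that \emph{some} refinement of $q\rest c$ forces $\tau_k \sqsubseteq \dot A$, so the condition $q^*$ with $\Succ_{q^*}(t) = \bigcap_k C_k$ does not force $\tau_k \sqsubseteq \dot A$ for any $k$, let alone $\tau_\omega \sqsubseteq \dot A$; indeed there is no contradiction in a single $S_c$ containing an infinite branch that lies in $V$. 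To repair this you would have to descend to conditions actually forcing each $\tau_k$, i.e.\ intersect an $\omega$-chain of conditions below each $q\rest c$---exactly the failure of countable closure you identify as the crux. Second, the auxiliary claim that the $v^*$'s can be chosen so that $\bigcap_k C_k$ retains size $\kappa_n$ is unsubstantiated: a decreasing $\omega$-chain of subsets of $\kappa_n$, each of size $\kappa_n$, can have empty intersection, and the sets $C_k$ are dictated by which values are forceable rather than being freely chosen.

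The paper's proof avoids both problems by reorganizing the recursion: instead of a global value-by-value dichotomy over all successors simultaneously, it processes the successors $t_\alpha$, $\alpha<\kappa_n$, one at a time, and below each $q\rest t_\alpha$ builds an entire $\omega$-chain $\langle p_{\alpha,i} \such i<\omega\rangle$ of conditions deciding longer and longer initial segments of (the range of) $\dot A$, chosen so that the resulting countable set of values escapes the union of the values produced at all earlier stages $\beta<\alpha$. The possibility of escaping is exactly where Theorem~\ref{prikry-distributivity} enters: were escape impossible, the condition would force $\rge(\dot A)$ into a set of size $<\kappa_n$, and $(\omega,\nu)$-distributivity would put $\dot A$ in $V$. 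Crucially, these $\omega$-chains are never intersected---only one finite stage of each is ever used. A pigeonhole argument via $\cf(\kappa_n)>\omega$ then produces a single finite stage $m$ and a set $B$ of size $\kappa_n$ on which the stage-$m$ decisions are already pairwise distinct, and $q'=\bigcup_{\alpha\in B}p_{\alpha,m}$ is the desired condition. Your closing suggestion of a continuous reading of $\dot A$ is in the right spirit but undeveloped; if you want to salvage your dichotomy, the per-successor diagonalization backed by the already-proved distributivity theorem is the missing ingredient.
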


\begin{proof} The procedure consists of the following five steps:

\ms

Step 1: We choose $t \sqsupseteq s$, $t \in q$ such that $|\Succ_{q}(t)| \geq \kappa_n$.

\ms

Step 2: We fix an injective enumeration
$\langle t_\alpha \such \alpha < \kappa_n\rangle $ of (possibly a subset) of $\Succ_{q}(t)$.
By induction on $\alpha < \kappa_n$ choose $\bar{p}_\alpha = \langle p_{\alpha,i} \such i < \omega \rangle$ and finite sets $\langle A_{\alpha, i } \such i < \omega \rangle $ with the following properties:

\begin{enumerate}
\item $p_{\alpha,i+1} \leq p_{\alpha, i}$ for each $i$.
\item $p_{\alpha,0} = q \rest t_\alpha$.

\item For each $i$, $p_{\alpha,i+1} \Vdash \rge(\dot A \rest i) = \check{A}_{\alpha,i}$.

\item $\bigcup_{i < \omega} A_{\alpha,i}
\not\subseteq \bigcup_{i < \omega, \beta < \alpha} A_{\beta,i}$.
\end{enumerate}

Condition (4) is possible since otherwise, if we defined $r = \bigcup_{\alpha<\kappa_n} q \rest t_\alpha$, we would have some $\alpha<\kappa_n$ such that $r \Vdash ``\rge(\dot{A}) \subseteq \bigcup_{i<\omega,\beta<\alpha}\check{A}_{\beta,i}$''. Since the cardinality of $\bigcup_{i<\omega,\beta<\alpha}A_{\beta,i}$ is less than $\kappa_n$, this would imply that $r \Vdash \dot{A} \in \check{V}$ by $(\omega, \kappa_n)$-distributivity, which contradicts our assumption that $\dot A$ is a new sequence.

\ms 

Step 3: Choose for $\alpha < \kappa_n$ some $a_\alpha \in \bigcup_{i < \omega} A_{\alpha,i} \setminus  \bigcup_{i < \omega, \beta < \alpha} A_{\beta,i}$.

\ms 

Step 4: Choose $m < \omega$ and $B \subseteq \kappa_n$, $|B| = \kappa_n$ such that $(\forall \alpha \in B)( a_\alpha \in 
\bigcup_{i < m} A_{\alpha,i}$. This is possible since $\cf(\kappa_n) > \omega$.

Now $\{A_{\alpha,m} \such \alpha \in B\}$ are pairwise distinct sets of size $m$.
\ms

Step 5: We let $q' = \bigcup_{\alpha \in B} p_{\alpha,m}$. Then $\Stem(q') = t$ and $t$ is $\dot{A}$-sharp with respect to $q'$.\end{proof}

Finally, we prove Theorem~\ref{waiving_meas}.

\begin{proof}
We use a fusion sequence and the fact that $\Miller_\kappa$ is $(\omega, \nu)$-distributive for any $\nu< \kappa$.

Observe that if $s \in q$ is $\dot A$-sharp, $q' \le q$, $s \in q'$, and $\Succ_q(s) = \Succ_{q'}(s)$, then $s$ remains $\dot A$-sharp.

We explain the construction assuming the claim by defining a fusion sequence $(p_n)_{n<\omega}$ as follows: Let $p_{0} = p$. Suppose that $n \geq 1$ and $p_{n-1}$ is constructed. We then construct $p_n$ such that $p_n \leq_{n-1} p_{n-1}$ and such that each $s^+ \in \Split_n(p_n)$ is $\dot{A}$-sharp. Explicitly, for all $t \in \Split_{n-1}(p_{n-1})$ and $s \in \Succ_{p_{n-1}}(t)$, we find $s^+ \sqsupseteq s$ and $q(t,s)$ witnessing Claim~\ref{sharp-node-miller}. Then we let 
\[
p_n = \bigcup\{q(t,s) \such t \in \Split_{n-1}(p_{n-1}), 
s \in \Succ_{p_{n-1}}(t)\}.
\]

Finally, we let $\bar p$ be the fusion of $(p_n)_{n<\omega}$ so that $\bar p$ witnesses the statement of the lemma. This completes the proof.
\end{proof}

\section{The Bernstein Technique}\label{bernstein-sec}

\subsection{Failures of $(\cf(\kappa),2)$-Distributivity and Collapsing $\kappa$ to $\cf(\kappa)$}
\label{Sub3.1}

In this subsection we show that $\Miller_\kappa$ and $\Laver_\kappa$ are not $(\cf(\kappa),2)$-distributive. This answers Question 3.6 of \cite{DobrinenHathawayPrikry}. The same result also applies to the respective $(<\!\!\cf(\kappa))$-closed versions of these forcings.

Throughout this section, $\kappa$ is a singular cardinal of uncountable cofinality $\lambda$. Let $\bP$ be any perfect version of $\M_\kappa$ or $\Laver_\kappa$ (we exclude the pre-perfect versions here).


\begin{theorem}\label{answer}
If $\cf(\kappa) = \lambda > \omega$ and $\kappa$ is a $\lambda$-strong limit, then $\bP$ is not
$(\lambda,2)$-distributive.
\end{theorem}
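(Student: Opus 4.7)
The plan is to exhibit a $\bP$-name $\dot f$ that $p$ forces to be a function $\lambda\to 2$ not lying in $V$. The mechanism is a sequence of ``Bernstein partitions'' of the level-$\delta$ branch sets $[p\rest\delta]$ of a fixed condition $p$: from each, one reads off a bit of $\dot f$ according to which side of the partition the generic branch $g_G\rest\delta$ lies in. The cardinal arithmetic that makes this feasible is that, for every $\delta<\lambda=\cf(\kappa)$, the $\lambda$-strong-limit hypothesis combined with $\delta<\cf(\kappa)$ gives
\[
|[p\rest\delta]| \leq \Bigl(\sup_{\beta<\delta}\kappa_\beta\Bigr)^{|\delta|} < \kappa,
\]
so every level-$\delta$ branch set sits in $V$ as a set of size strictly less than $\kappa$.

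Fix a cofinal increasing sequence $\langle\delta_\alpha\such\alpha<\lambda\rangle$ in $\lambda$. The construction, done in $V$ by transfinite recursion, produces partitions $[p\rest\delta_\alpha]=B_\alpha\sqcup C_\alpha$ with the \emph{Bernstein property}: for every $q\leq p$ in $\bP$ there is some $\alpha<\lambda$ for which $[q\rest\delta_\alpha]$ meets both $B_\alpha$ and $C_\alpha$. Given this, I define $\dot f(\alpha)=1$ iff $g_G\rest\delta_\alpha\in B_\alpha$, where $g_G$ is the generic branch from Proposition~\ref{generic-branch-prop} in the $(<\!\lambda)$-closed cases and the straightforward analog in the remaining perfect versions. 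To see $\dot f\notin V$, given $f\in V$ and $q\leq p$, I pick $\alpha$ witnessing the Bernstein property, choose a branch $b\in[q\rest\delta_\alpha]$ on the side opposite $f(\alpha)$, and observe that $q\rest b$ (consisting of the initial segments of $b$ together with the perfect sub-tree of $q$ above $b$) is a legitimate extension of $q$ in $\bP$ that forces $\dot f(\alpha)\neq f(\alpha)$, so no extension of $p$ forces $\dot f$ to coincide with a ground-model function.

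To carry out the Bernstein construction, I plan to enumerate a sufficient family of restrictions $\{[q\rest\delta_\alpha]\such q\leq p\}$ in $V$ and, at each stage, pick two previously unused branches of the enumerated set, placing one into $B_\alpha$ and the other into $C_\alpha$. The work is spread across the $\lambda$ levels: each relevant condition $q$ is assigned to a level $\delta_{\alpha(q)}$ chosen large enough that $[q\rest\delta_{\alpha(q)}]$ has at least two elements (which holds beyond the first splitting level of $q$, below $\lambda$ by ever-branching) and has cardinality dominating the number of conditions already scheduled for that level. Ever-branching together with $(<\!\lambda)$-closure (or the non-stopping property in the other perfect variants) ensures that $|[q\rest\delta]|$ grows with $\delta$ toward $|[p\rest\delta]|$, supplying the room that the Bernstein step requires.

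The main obstacle is precisely this bookkeeping in the Bernstein recursion: a priori the family $\{[q\rest\delta_\alpha]\such q\leq p\}$ can be enormous, so one cannot handle every condition at a single level and must spread the handling across the $\lambda$ many $\delta_\alpha$'s while staying within the per-level ``budget'' $|[p\rest\delta_\alpha]|$. The $\lambda$-strong-limit assumption enters here in two ways: it caps each per-level budget strictly below $\kappa$, and it keeps $\mu_{\delta_\alpha}$ growing along $\alpha<\lambda$ so that each $q$ finds an $\alpha(q)$ making the assignment feasible. I expect the argument to split into two styles: for the Laver-style versions the uniform splitting $\OSucc_q(t)=\kappa_{\dom(t)}$ above the stem lets one avoid the recursion altogether and work with a per-level partition of $\kappa_{\delta_\alpha}$ (essentially a parity-type split), while for the Miller-style and other variants the genuine transfinite Bernstein construction sketched above is required.
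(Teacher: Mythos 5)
Your mechanism is the right one and matches the paper's in spirit: Bernstein-style partitions of the level-$\delta$ branch sets, a name $\dot f$ whose $\alpha$-th bit records which side of the partition the stem of a deciding condition falls on, and the density argument showing no $q\le p$ can force $\dot f$ to equal a ground-model function. The gap is in how you propose to build the partitions. You schedule each condition $q\le p$ to its own level $\delta_{\alpha(q)}$ and reserve a fresh pair of branches for it there; but there are up to $2^\kappa$ conditions below $p$, while the total room available across all levels is only $\sum_{\alpha<\lambda}\mu_{\delta_\alpha}=\kappa$ (where $\mu_\delta=|\prod_{\beta<\delta}\kappa_\beta|<\kappa$ by the $\lambda$-strong-limit hypothesis). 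No assignment of individual conditions, or even of individual restrictions $[q\rest\delta]$ (of which a single level can carry up to $2^{\mu_\delta}$ many distinct ones), can respect the per-level budgets globally. So the recursion as described cannot terminate with the Bernstein property you need.

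The paper escapes this by doing no cross-level bookkeeping at all: at each limit level $\alpha$ it takes the family $\cA(\alpha)$ of branch sets of perfect trees of height $\alpha$ that have \emph{full} cardinality $\mu_\alpha$; there are at most $\mu_\alpha$ such trees, so the classical Bernstein lemma applies level by level. The price is a key lemma you are missing: one must prove that every condition $q\le p$ actually appears in some $\cA(\alpha)$, i.e., that $|[q\rest\alpha]|=\mu_\alpha$ for some (in fact club-many) $\alpha<\lambda$. This is Lemma~\ref{silver_counting}, a Silver-style Fodor argument resting on Lemma~\ref{perfect_equivalence} and on $2^\lambda<\kappa$. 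Your remark that ``$|[q\rest\delta]|$ grows with $\delta$ toward $|[p\rest\delta]|$'' is an unproven stand-in for exactly this: a perfect tree is only shown to attain the full value $\mu_\delta$ on a club, and ruling out that it falls short everywhere is where the $\lambda$-strong-limit hypothesis does its real work. Without that lemma (or a correct repair of your scheduling), the property that every $q\le p$ meets both sides of some partition is not established. Your side remarks are fine: reading the bit off the generic branch is equivalent to the paper's stem-based name in the closed cases, and passing from a full branch $b\in[q\rest\delta]$ to the condition $q\rest b$ is legitimate for the perfect ($<\!\lambda$-closed) versions.
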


We show that our assumptions regarding cardinal arithmetic are sharp using a result that probably goes back to Solovay.

\begin{remark} If $\tau, \lambda < \kappa$ and $\tau^\lambda \geq \kappa$, then $\bP$ is not $(\lambda, \tau)$-distributive. If 
$\tau, \lambda < \kappa$, $\tau^\lambda \geq \kappa$,  and $\tau \leq \lambda$, then $\bP$ again is not $(\lambda, 2)$-distributive. 
\end{remark}

\begin{proof} Let $f \colon \kappa \to {}^\lambda \tau$ be an injective function, let $b \colon \lambda \to \lambda \times \lambda$ be a a bijective function, and write $b(\alpha) = (b_1(\alpha), b_2(\alpha))$.
Let $x \colon \lambda \to \kappa$ be the $\bP$-generic branch. Then
$\alpha \mapsto f(x(b_1(\alpha)))((b_2(\alpha))$ is a new function from $\lambda$ to $\tau$. The second line of the statement follows from a coding that we get if $\lambda^\lambda \ge \kappa$.
\end{proof}

We recall the classical result of Bernstein for completeness.

\begin{lemma}[The Existence of
 $\cA$-Bernstein Sets.] Let $\mu$ be an infinite cardinal and let $\cA \subseteq {\mathcal P}(\mu)$, be such that $|\cA| \leq \mu$ and such that for any  $A \in \cA$ we have $|A| = \mu$.
Then there is a set $\{B_\alpha \such \alpha < \mu \}$
with the following properties:
\begin{enumerate}
\item for each $\alpha < \mu$,  $B_\alpha \subseteq \mu$ and $|B_\alpha|=\mu$,
\item for $\alpha \neq \beta$, $B_\alpha \cap B_\beta = \emptyset$,
\item for all $A \in \cA$, for any $\alpha < \mu$, 
$A \cap B_\alpha \neq \emptyset$.
\end{enumerate}
\end{lemma}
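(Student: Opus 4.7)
The plan is a transfinite recursion of length $\mu$ in which I pick one element at a time and assign it to one of the $B_\alpha$'s. The key preliminary step is setting up a suitable enumeration of the ``demands'' of the form ``put a fresh element of $A$ into $B_\alpha$.''

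First I would fix an enumeration $\langle (\alpha_\xi, A_\xi) \such \xi < \mu \rangle$ of $\mu \times \cA$ with the feature that for every pair $(\alpha, A) \in \mu \times \cA$ the set $\{\xi < \mu \such (\alpha_\xi, A_\xi) = (\alpha, A)\}$ has cardinality $\mu$. This is possible because $|\mu \times \cA \times \mu| = \mu$ (using $|\cA| \leq \mu$ and $\mu$ infinite), so one can bijectively enumerate $\mu \times \cA \times \mu$ by $\mu$ and project onto the first two coordinates. If $\cA = \emptyset$ the lemma reduces to partitioning $\mu$ into $\mu$ pairwise disjoint pieces of size $\mu$, which is standard, so I would henceforth assume $\cA \neq \emptyset$.

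Next, by recursion on $\xi < \mu$ I would choose $x_\xi \in A_\xi \setminus \{x_\eta \such \eta < \xi\}$. At stage $\xi$ this is possible since $|A_\xi| = \mu$ while $|\{x_\eta \such \eta < \xi\}| \leq |\xi| < \mu$, so the difference is nonempty (in fact of size $\mu$, without needing $\mu$ regular). The $x_\xi$ are then pairwise distinct. I would then define
\[
B_\alpha = \{x_\xi \such \xi < \mu,\ \alpha_\xi = \alpha \}, \qquad \alpha < \mu.
\]

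Finally I would verify the three properties. Disjointness is immediate since each $x_\xi$ is placed into exactly one $B_{\alpha_\xi}$. For condition (3), given any $\alpha < \mu$ and $A \in \cA$, by the choice of enumeration there are $\mu$-many $\xi$ with $(\alpha_\xi, A_\xi) = (\alpha, A)$, and for each such $\xi$ we have $x_\xi \in A \cap B_\alpha$; in particular $A \cap B_\alpha \neq \emptyset$. The same observation, applied to any fixed $A \in \cA$, also shows $|B_\alpha| \geq \mu$, and since $B_\alpha \subseteq \mu$ we conclude $|B_\alpha| = \mu$, giving condition (1).

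The only real content is the enumeration step: arranging that each demand $(\alpha, A)$ recurs cofinally often. Once that is in place, the greedy selection is automatic, needing only $|A_\xi| = \mu$ at each stage.
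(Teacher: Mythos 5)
Your proof is correct. It takes a different route from the paper's, though both are at bottom greedy transfinite recursions that pick fresh elements. The paper proceeds in two stages: it first builds a \emph{disjoint refinement} $\langle D_\alpha \such \alpha<\mu\rangle$ of $\cA$ (pairwise disjoint sets $D_\alpha\in[A_\alpha]^\mu$, obtained by a double induction choosing fresh points $a_{\beta,\gamma}\in A_\beta$), and then defines $B_\beta$ as the set of $\beta$-th elements of the $D_\alpha$'s under fixed enumerations, i.e., as a system of transversals across the refinement. You instead collapse this into a single recursion of length $\mu$ by pre-scheduling all demands $(\alpha,A)$ so that each recurs $\mu$-many times, and assigning each chosen point directly to its target $B_\alpha$. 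Your bookkeeping is slightly more delicate up front (the repeating enumeration of $\mu\times\cA$, plus the separate treatment of $\cA=\emptyset$, which the paper glosses over), but it avoids the intermediate object entirely and yields the mild strengthening $|A\cap B_\alpha|=\mu$ for free, whereas the paper's construction only directly exhibits one point of $A_\alpha$ in each $B_\beta$. The paper's route has the advantage of isolating the disjoint refinement, a notion of independent interest in the Balcar--Simon circle of ideas the authors draw on elsewhere. Either argument is complete and adequate for the application.
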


The $B_\alpha$'s are sometimes called Bernstein sets for $\cA$.

\begin{proof}
We enumerate $\cA$ as $\langle A_\beta \such \beta < \mu\rangle$.  By induction on $\gamma < \mu$ we choose an element $a_{\beta,\gamma}\in \mu$ by induction on $\beta < \gamma$ such that
\begin{align*}
a_{\beta,\gamma} & \in  A_\beta \setminus \{a_{\beta',\gamma'}
\such (\beta' < \gamma' < \gamma) \vee (\beta' < \beta <\gamma' = \gamma)\}.
 \end{align*}
Then we define $D_\alpha = \{a_{\alpha,\beta} \such \alpha<\beta < \mu\}$.
Now $D_\alpha$, $\alpha < \mu$, is a disjoint refinement of $\cA$. That means
$D_\alpha \in [A_\alpha]^\mu$ and for $\beta \neq \alpha$, $D_\alpha \cap D_\beta = \emptyset$.
We enumerate each $D_\alpha$ as $\langle d_{\alpha,\beta} \such \beta < \mu\rangle$.
We let $B_\beta = \{d_{\alpha, \beta} \such \alpha < \mu \}$.
\end{proof}

We will use a simple lemma extending Proposition~\ref{imperfect_antichains} that helps us avoid the assumption of $\GCH$ in our proof of Theorem~\ref{answer}.

\begin{lemma}\label{perfect_equivalence} Suppose that $\kappa$ is the supremum of $\vec \kappa = \langle \kappa_\alpha \such \alpha < \delta \rangle$ where $\delta$ is an ordinal less than $\kappa$, and that $\tau^\delta \le \kappa$ for all $\tau < \kappa$. Let $T \subset  N(\vec \kappa)$ be a tree with no branches of length $<\delta$. Then the following are equivalent:

\begin{enumerate}
\item $T$ has a perfect subtree.
\item The cardinality of $[T]$ is equal to the cardinality of $\prod_{\alpha<\delta}\kappa_\alpha$.
\item The cardinality of $[T]$ is greater than $\kappa$.
\end{enumerate}\end{lemma}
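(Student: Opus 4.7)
My plan is to establish the equivalences through the cycle (1) $\Rightarrow$ (2) $\Rightarrow$ (3) $\Rightarrow$ (1).

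For (2) $\Rightarrow$ (3) I only need to check the absolute inequality $|\prod_{\alpha<\delta}\kappa_\alpha|>\kappa$. Since $\vec\kappa$ is increasing with supremum $\kappa$, I would thin it to a strictly increasing cofinal subsequence $\langle\kappa_{\beta_\xi}:\xi<\cf(\kappa)\rangle$ and apply K\"onig's theorem with $\mu_\xi=\kappa_{\beta_\xi}$ and $\lambda_\xi=\kappa_{\beta_{\xi+1}}$, so that $\kappa=\sum_\xi\mu_\xi<\prod_\xi\lambda_\xi\le|\prod_{\alpha<\delta}\kappa_\alpha|$.

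For (3) $\Rightarrow$ (1) I argue contrapositively. If $T$ has no perfect subtree, Proposition~\ref{imperfect_antichains} yields a maximal antichain $A\subseteq T$ such that for each $t\in A$ there is $\alpha_t<\delta$ with $|\Succ_T(t')|<\kappa_{\alpha_t}$ for every $t'\sqsupseteq t$. Every cofinal branch of $T$ has a unique initial segment in $A$: two distinct elements of $A$ extending a common $b\rest\alpha$ would be comparable hence equal, while no single $t\in A$ can extend an entire cofinal branch $b$ because $\dom(t)<\delta$. Hence $[T]=\bigsqcup_{t\in A}[T_t]$ with $T_t=\{s\in T:s\sqsupseteq t\}$, and each $|[T_t]|\le\kappa_{\alpha_t}^{|\delta|}\le\kappa$ by the arithmetic hypothesis. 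A direct estimate gives $|N(\vec\kappa)|\le\delta\cdot\sup_{\alpha<\delta}\kappa_\alpha^{|\alpha|}\le\kappa$, so $|A|\le\kappa$, and combining we conclude $|[T]|\le\kappa$, contradicting (3).

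The main work is (1) $\Rightarrow$ (2). Let $S\subseteq T$ be a perfect subtree. The inclusion $[S]\subseteq\prod_{\alpha<\delta}\kappa_\alpha$ yields the upper bound automatically, so I need to inject $\prod_{\alpha<\delta}\kappa_\alpha$ into $[S]$. I would fix a cofinal sequence $\langle\alpha_\xi:\xi<\cf(\delta)\rangle$ in $\delta$ (with $\alpha_0=0$) and build by transfinite recursion a tree system $\{\sigma(s):s\in\bigcup_{\zeta<\cf(\delta)}\prod_{\xi<\zeta}M_\xi\}\subseteq S$, where $M_\xi=|\prod_{\alpha_\xi\le\beta<\alpha_{\xi+1}}\kappa_\beta|$, preserving the tree order, with each $\sigma(s)$ a $\kappa_{\alpha_{\lh(s)}}$-splitting node in $S$ and distinct immediate extensions of $s$ mapped to pairwise incomparable nodes. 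At a successor step $\zeta\mapsto\zeta+1$, iterated applications of the perfect-tree property within the ordinal interval $[\alpha_\zeta,\alpha_{\zeta+1})$ give enough incomparable $\kappa_{\alpha_{\zeta+1}}$-splitting extensions of $\sigma(s)$ to code $M_\zeta$ many choices. By associativity of products, $\prod_{\xi<\cf(\delta)}M_\xi=|\prod_{\alpha<\delta}\kappa_\alpha|$, so this system will produce the required injection $f\mapsto\bigcup_\zeta\sigma(f\rest\zeta)\in[S]$.

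The main obstacle is the limit-stage step at $\zeta<\cf(\delta)$, where one needs $t_s=\bigcup_{\xi<\zeta}\sigma(s\rest\xi)$ to lie in $S$ so that the perfect-tree property can produce $\sigma(s)$. This is precisely what the cofinal indexing buys: $\dom(t_s)$ is a supremum of fewer than $\cf(\delta)$ ordinals below $\delta$, hence still below $\delta$, and then $<\delta$-closure of $S$ places $t_s\in S$ (otherwise $t_s$ would be a branch of $S$ of length strictly less than $\delta$, contradicting perfectness). Once $t_s\in S$, perfectness supplies the required $\kappa_{\alpha_\zeta}$-splitting extension and the recursion continues. Each $f\in\prod_{\xi<\cf(\delta)}M_\xi$ therefore yields a distinct cofinal branch of $S$, giving $|\prod_{\alpha<\delta}\kappa_\alpha|\le|[S]|$ as required.
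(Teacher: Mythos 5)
Your proposal follows the paper's own argument in all three implications: (1)$\Rightarrow$(2) is the same recursive construction of a system of splitting nodes indexed by $\prod_{\alpha<\delta}\kappa_\alpha$ (your block decomposition along a cofinal sequence in $\delta$ merely reorganizes the paper's level-by-level recursion, and your limit-stage appeal to $<\!\delta$-closure is exactly the step the paper uses), (2)$\Rightarrow$(3) is K\"onig's theorem, and (3)$\Rightarrow$(1) is the same contrapositive via Proposition~\ref{imperfect_antichains} with the same cardinality estimates.

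One point in your (3)$\Rightarrow$(1) deserves attention. You assert that every cofinal branch of $T$ has a (unique) initial segment in the maximal antichain $A$, so that $[T]$ is partitioned by the sets $[T_t]$, $t \in A$. Your justification establishes uniqueness (two members of $A$ lying on the same branch are comparable, hence equal) and rules out a single $t \in A$ end-extending all of $b$, but it does not establish existence: maximality of $A$ only guarantees that each node $b \nrest \alpha$ is \emph{comparable} with some $t_\alpha \in A$, and it can happen that $b \nrest \alpha \sqsubset t_\alpha$ for every $\alpha$ with the witnesses $t_\alpha$ varying, so that $b$ threads through the downward closure of $A$ without ever containing a member of $A$ (for instance $A = \{0^n{}^\frown\langle 1\rangle \such n<\omega\}$ in $2^{<\omega}$ is a maximal antichain missed by the all-zero branch). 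The paper's proof elides the same point, so your count of the branches meeting $A$ matches the paper's; a fully complete argument must in addition bound the branches avoiding $A$ --- these all lie in the downward closure of $A$, which again has no perfect subtree --- and this requires either a further argument or an iterated (rank-style) application of Proposition~\ref{imperfect_antichains} rather than a single one.
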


\begin{proof} First we show that (1) implies (2). It is enough to define an injection $\prod_{\alpha<\delta}\kappa_\alpha \hookrightarrow [T'] \subset [T]$ where $T'$ is a perfect subtree of $T$. First we will define an injection $\bar \varphi \colon N(\vec{\kappa}) \to T'$ by induction on $\beta$. We let $\bar{\varphi}(\emptyset)$ be a $\kappa_0$-splitting node of $p$. For the successor case, suppose that $\bar \varphi$ is defined on $\prod_{\alpha<\gamma}\kappa_\alpha$ and $\beta = \gamma+1$. Then for all $f \in \prod_{\alpha<\gamma}\kappa_\alpha$, find a $\kappa_\gamma$-splitting node $s \sqsupseteq \bar \varphi(f)$, enumerate $\Succ_{T'}(s)= \langle t_\xi \such \xi < \kappa_\gamma \rangle$, and then define $\bar \varphi(f^\frown \langle \gamma,\xi \rangle)$ to be $t_\xi$ for all $\xi<\kappa_\gamma$. The argument is similar if $\beta$ is a limit, but for $f \in \prod_{\alpha<\beta}\kappa_\alpha$ we must find the appropriate splitting node above $\langle\bar \varphi(f \rest \gamma) \such \gamma < \beta \rangle$. Finally, we define $\varphi : \prod_{\alpha<\delta}\kappa_\alpha \to [T']$ by letting $\varphi(f)$ be the branch given by $\langle\bar \varphi(f \rest \beta) \such \beta < \delta \rangle$.

The fact that (2) implies (3) is because of K{\"o}nig's Theorem.

To prove that (3) implies (1), suppose contrapositively that $T$ does not have a perfect subtree. Let $A \subset T$ be the antichain given by Proposition~\ref{imperfect_antichains}. Observe that since $A \subset T$ and $|T| \le \kappa$ (using that $\tau^\delta \le \kappa$ for all $\tau<\kappa$), it follows that $|A| \le \kappa$. If $t \in A$ and $\alpha$ is such that $\forall t' \sqsupset t, |\Succ_T(t')|<\kappa_\alpha$, it follows that there are no more than $\kappa_\alpha^\delta$-many branches containing $t$, meaning at most $\kappa$-many (again using that $\tau^\delta \le \kappa$ for all $\tau<\kappa$). Therefore, $T$ has at most $\kappa$-many branches.\end{proof}

The following can be seen as a variation on Silver's famous theorem for singular cardinals of uncountable cofinality.

\begin{lemma}\label{silver_counting} Let $\kappa$ be a singular $\lambda$-strong limit cardinal of uncountable cofinality $\lambda$. Let $\langle\kappa_\alpha \such \alpha < \lambda \rangle$ be a sequence of regular cardinals converging to $\kappa$, and for $\alpha<\lambda$ let $\mu_\alpha$ be the cardinality of $\prod_{\beta<\alpha}\kappa_\beta$.

If $T$ is a perfect tree in the space $\prod_{\alpha < \lambda}\kappa_\alpha$, then the set 
\[
\{ \alpha < \lambda \such |[T \nrest \alpha]| = \mu_\alpha \}
\]
\noindent contains a club.\end{lemma}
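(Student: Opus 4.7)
The plan is to reduce to showing $|[T\rest\alpha]|\ge\mu_\alpha$ for $\alpha$ in a club, since the upper bound $|[T\rest\alpha]|\le\mu_\alpha$ is immediate from $[T\rest\alpha]\subseteq\prod_{\beta<\alpha}\kappa_\beta$. I would build, simultaneously with a normal function $g\colon\lambda\to\lambda$, an order-preserving injection $\varphi\colon N(\vec\kappa)\to T$ that is level-respecting in the sense that every $s$ at level $\beta$ in $N(\vec\kappa)$ gets mapped into $T$ at a level strictly below $g(\beta)$. Taking $C=\{\alpha<\lambda : g(\beta)<\alpha\text{ for all }\beta<\alpha\}$ produces the desired club: for $\alpha\in C$, each $s\in\prod_{\beta<\alpha}\kappa_\beta$ determines, via $<\lambda$-closure of $T$, a node $\bar\varphi(s)\in T(\alpha)$ obtained as the limit of $\varphi(s\rest\beta)$ for $\beta<\alpha$ (extended to level $\alpha$ if the limit falls short). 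Distinct $s$'s yield pairwise incomparable predecessors in $T$, so $s\mapsto\bar\varphi(s)$ is an injection $\prod_{\beta<\alpha}\kappa_\beta\hookrightarrow T(\alpha)$, which by $<\lambda$-closure has the same cardinality as $[T\rest\alpha]$.

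The construction of $\varphi$ and $g$ is recursive. At successor stage $\beta+1$: for each $\varphi(s)$ with $s$ of level $\beta$ I use ever-branching of $T$ to find a $\kappa_\beta$-splitting extension of $\varphi(s)$ in $T$, then use $<\lambda$-closure of $T$ to line up the $\kappa_\beta$ children at a common level $<\lambda$, and let $g(\beta+1)$ be just above this common level. At limits, $\varphi$ extends by $<\lambda$-closure of $T$ and $g$ by continuity.

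The main obstacle will be keeping $g(\beta+1)$ strictly below $\lambda$: the set of $\varphi(s)$'s at stage $\beta$ has cardinality $\mu_\beta$, and once $\mu_\beta\ge\lambda$ the supremum of their least splitting levels in $\lambda$ need not stay below $\lambda$. I would handle this by thinning the construction to a \emph{backbone} of width $<\lambda$ at each stage, keeping only enough representative splittings so that the multiplications at limit stages still produce $\mu_\alpha$ many compatible length-$\alpha$ sequences. The $\lambda$-strong limit hypothesis on $\kappa$ enters here to control intermediate cardinals: for each $\tau<\kappa$ we have $\tau^\lambda<\kappa$, which bounds the cardinality of intermediate ``fronts'' below $\kappa$ and ensures that the relevant suprema in $\lambda$ remain below $\lambda$. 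At $\alpha\in C$ the inverse-limit calculation on the backbone then delivers $\prod_{\beta<\alpha}\kappa_\beta=\mu_\alpha$ elements of $[T\rest\alpha]$, giving the lower bound and completing the argument.
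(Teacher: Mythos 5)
Your reduction to the lower bound $|[T\nrest\alpha]|\ge\mu_\alpha$ is fine, but the construction you propose for it does not go through, and the patch you offer cannot work for cardinality reasons. The obstruction you yourself flag is fatal in the form you leave it: at stage $\beta$ the front $\{\varphi(s)\such s\in\prod_{\gamma<\beta}\kappa_\gamma\}$ has size $\mu_\beta$, and since $\mu_\beta\ge\kappa_\gamma$ for every $\gamma<\beta$ while $\sup_\gamma\kappa_\gamma=\kappa>\lambda$, eventually $\mu_\beta>\lambda=\cf(\lambda)$; the least $\kappa_\beta$-splitting levels above these $\mu_\beta$-many nodes then form a subset of $\lambda$ that need not be bounded (a perfect tree may delay the next splitting above distinct nodes of a fixed front cofinally in $\lambda$ --- ever-branching and $<\!\lambda$-closure permit this), so no total normal $g$ exists in general. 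The proposed repair --- thinning to a backbone of widths $\nu_\beta<\lambda$ --- caps the number of length-$\alpha$ branches through the backbone at $\prod_{\beta<\alpha}\nu_\beta\le\lambda^{|\alpha|}\le 2^\lambda<\kappa$, whereas $\mu_\alpha\ge\delta_\alpha=\sup_{\beta<\alpha}\kappa_\beta$, which tends to $\kappa$; so the backbone can never certify $|[T\nrest\alpha]|=\mu_\alpha$ for large $\alpha$. Note also that the $\lambda$-strong limit hypothesis bounds the fronts below $\kappa$, but it says nothing about suprema of $\mu_\beta$-many ordinals below $\lambda$: boundedness there would require the index set to have size $<\lambda$, which it does not.

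The missing idea is that the lemma is proved by contradiction rather than by a bottom-up embedding. The paper first records (via Lemma~\ref{perfect_equivalence} applied to $T\nrest\alpha$) the dichotomy that for $\alpha$ in a club $D$ of limit ordinals with $\delta_\alpha$ a $\lambda$-strong limit, either $|[T\nrest\alpha]|=\mu_\alpha$ or $|[T\nrest\alpha]|\le\delta_\alpha$. If the conclusion failed, the set $S$ of $\alpha\in D$ with $|[T\nrest\alpha]|\le\delta_\alpha$ would be stationary; enumerating $[T\nrest\alpha]$ in order type $\delta_\alpha$ for $\alpha\in S$ and applying Fodor to the regressive map sending $\alpha$ to the least $\beta<\alpha$ such that $b\nrest\alpha$ has index below $\delta_\beta$, one pins each cofinal branch $b$ of $T$ down by a stationary $S'\subseteq S$, an ordinal $\beta<\lambda$, and a function $S'\to\delta_\beta$; since $2^\lambda<\kappa$ and $\kappa_\beta^\lambda<\kappa$, this yields $|[T]|\le\kappa$, contradicting perfectness of $T$ by Lemma~\ref{perfect_equivalence} again. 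This Silver-style counting is what replaces the embedding you were trying to build, and it also explains why the conclusion is only that the set \emph{contains} a club rather than holding at every limit $\alpha$: for a fixed limit $\alpha_0$ one can build a perfect $T$ with $T\nrest\alpha_0$ a single branch, so no purely local, level-by-level argument can succeed.
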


\begin{proof} Let $\langle\delta_\alpha \such \alpha < \lambda \rangle$ be the club such that for all $\alpha<\lambda$, $\delta_{\alpha+1} = \kappa_{\alpha+1}$, and such that for all limits $\alpha<\lambda$, $\delta_\alpha := \sup_{\beta<\alpha}\kappa_\alpha$. There is a club $D \subseteq \lambda$ such that for all $\alpha \in D$, $\alpha$ is a limit and $\delta_\alpha$ is a $\lambda$-strong limit. Observe that for all $\alpha<\lambda$, $T \nrest \alpha$ has no branches of length less than $\alpha$. Then observe that by Lemma~\ref{perfect_equivalence}, if $\alpha \in D$, then $|[T \nrest \alpha]| \ne \mu_\alpha$ implies that $|[T \nrest \alpha ]| \le \delta_\alpha$. Therefore, if the conclusion of the lemma we are trying to prove is false, then $S:= \{\alpha \in D \such |[T \nrest \alpha ]| \le \delta_\alpha \}$ is stationary in $\lambda$.

Now we can demonstrate the basic idea behind our definition of $S$. For all $\alpha \in S$, enumerate $[T \nrest \alpha]$ as $\langle t^\alpha_\xi \such \xi < \delta_\alpha \rangle$. If $b \in [T]$ is a cofinal branch, then for $\alpha \in S$ let $F(\alpha)$ be the least $\beta<\alpha$ such that there is some $\xi<\delta_\beta$ such that $b \nrest \alpha = t^\alpha_\xi$. This is a regressive function on $S$, so Fodor's Lemma implies that there is a stationary subset $S' \subseteq S$ and some $\beta<\lambda$ such that for all $\alpha \in S'$, $b \rest \alpha = t_\xi^\alpha$ for some $\xi<\delta_\beta$.

We can use this idea to show that there are at most $\kappa$-many cofinal branches of $T$. Suppose for contradiction that there are at least $\kappa^+$-many cofinal branches of $T$. Then because $2^\lambda < \kappa$, there is a stationary subset $S' \subset S$ and some $\beta < \lambda$ such that are $\kappa^+$-many branches $b$ with the property that for all $\alpha \in S'$, $b \nrest \alpha = t^\alpha_\xi$ for some $\xi<\delta_\beta$. Then since $\delta_\beta \le \kappa_\beta$ and $\kappa_\beta^\lambda < \kappa$, it follows that there is a function $h: S' \to \delta_\beta$ such that for $\kappa^+$-many distinct branches $b$, $b \nrest \alpha = t^\alpha_{h(\alpha)}$ for all $\alpha \in S'$. But these branches are determined cofinally, so this is impossible.

Since $T$ has at most $\kappa$-many branches, it cannot be a perfect tree.\end{proof}

Now we turn to the proof of Theorem~\ref{answer}:

\begin{proof} Again let $\mu_\alpha$ denote $|\prod_{\beta < \alpha} \kappa_\beta|$. For a limit $\alpha<\lambda$, let $\cA(\alpha)$ be the collection of subsets $X$ of $\prod_{\beta < \alpha} \kappa_\beta$ of size $\mu_\alpha$ (i.e$.$ the subsets with ``full'' size) that are the branches of a perfect tree in $\bigcup_{\gamma<\alpha} \prod_{\beta < \gamma} \kappa_\beta$. Note that there are at most $2^{\delta_\alpha} = 2^{\lim_{\beta< \alpha} \kappa_\beta} = \mu_\alpha$-many 
perfect trees in $\bigcup_{\gamma<\alpha} \prod_{\beta < \gamma} \kappa_\beta$ and hence $|\cA(\alpha)| \leq \mu_\alpha$.  
Let $\{B_{\alpha,\gamma} \such \gamma < \mu_\alpha\}$ be a 
set of pairwise disjoint  $\cA(\alpha)$-Bernstein sets. This means that if $X \subset \prod_{\beta < \alpha} \kappa_\beta$ is in $\cA(\alpha)$, then for $\gamma < \mu_\alpha$, $B_{\alpha,\gamma} \cap X \ne \emptyset$ and $X \cap (\prod_{\beta < \alpha} \kappa_\beta \setminus B_{\alpha,\gamma}) \ne \emptyset$. For $\alpha=0$ and successors $\alpha$, we let $B_{\alpha,\gamma} = \{0\}$.

Now we define a $\bP$-name for a function from
$\Lim(\lambda)$, the set of limit ordinals in $\lambda$,  to $2$. We define $\dot f$ as the following name:
\begin{equation}\label{binary-bernstein-name} 
\begin{aligned} 
 \{ \langle & \check{(\alpha, 0)} , p \rangle \such
\alpha \in \Lim(\lambda), \dom(\Stem(p)) \geq \alpha, \Stem(p)\nrest \alpha \in B_{\alpha,0}\} \cup 
\\ 
& 
\{\langle \check{(\alpha, 1)}, p \rangle \such
\alpha \in \Lim(\lambda), \dom(\Stem(p)) \geq \alpha, \Stem(p) \nrest  \alpha \not\in B_{\alpha,0}\}.
\end{aligned}
\end{equation}

Assume for contradiction that there is some $g \in {}^{\Lim(\lambda)} 2 \cap V$ and there is some $p \in \bP$ such that $p \Vdash \dot{f} = g$.

By Lemma \ref{silver_counting}, there is some limit ordinal $\alpha > \dom(\Stem(p))$ such that $|[p \nrest \alpha]| = \mu_\alpha$, and therefore $[p \nrest \alpha] \in \cA(\alpha)$. If $g(\alpha) =1$, then we strengthen $p$ to some condition $q$ such that
that $\Stem(q) \nrest \alpha \in B_{\alpha,0}$. This is possible, since $B_{\alpha,0}$ is a $\cA(\alpha)$-Bernstein set and $[p \nrest \alpha] \in \cA(\alpha)$, so $[p \nrest \alpha] \cap B_{\alpha,0} \neq \emptyset$.
If $g(\alpha) =0$, then we strengthen $p$ to some condition $q$ such that $\Stem(q) \nrest \alpha \not\in B_{\alpha,0}$. This is again possible, since $\{B_{\alpha,\gamma}\such \gamma < \mu_\alpha\}$ is a set of disjoint $\cA(\alpha)$-Bernstein sets and $[p \nrest \alpha] \in \cA(\alpha)$, so $[p \nrest \alpha] \not\subseteq B_{\alpha,0}$.

So in any case we have $q \leq p$ and $q \Vdash \dot{f}(\alpha) \neq g(\alpha)$, and hence a contradiction. Thus, forcing with $\bP$ adds a new function $\dot{f}_G \in {}^{\Lim(\lambda)} 2$ and hence $\bP$ is not $(\lambda, 2)$-distributive.\end{proof}

This Bernstein argument applies equally to the
Laver versions of $\bP$ (see Def.~\ref{more_forcings}(2))
and to the versions splitting into particular filter sets and also to the versions in which each limit of splitting nodes is a splitting node. Of course, Lemma~\ref{silver_counting} is not necessary for the Laver versions.

The Bernstein technique shows us that Lemma~\ref{silver_counting} leads to a stronger negation of
$\cf(\kappa)$-distributivity: there is a name for a collapsing function.

\begin{theorem}\label{lambda_onto_kappa}
If $\cf(\kappa) = \lambda > \omega$ and $\kappa$ is a $\lambda$-strong limit, then $\bP$ collapses $\kappa$ to $\lambda$.
\end{theorem}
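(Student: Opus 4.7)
The plan is to strengthen the Bernstein technique of Theorem~\ref{answer}: rather than using a single set $B_{\alpha,0}$ at each limit level $\alpha$ to code one bit of new information, I use the whole disjoint family $\{B_{\alpha,\gamma} \such \gamma < \mu_\alpha\}$ already produced in that proof to code an ordinal below $\mu_\alpha$ at each level. Since $\sup_{\alpha < \lambda}\mu_\alpha = \kappa$, this packs enough information into the generic to build a surjection $\lambda \twoheadrightarrow \kappa$ in $V[G]$.

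Using these same Bernstein sets, I define a $\bP$-name $\dot F$ for a function $\Lim(\lambda) \to \kappa$ by requiring that $\dot F_G(\alpha) = \gamma$ iff some $p \in G$ has $\dom(\Stem(p)) \geq \alpha$ and $\Stem(p) \rest \alpha \in B_{\alpha,\gamma}$, and $\dot F_G(\alpha) = 0$ otherwise. Since conditions in $G$ have pairwise compatible stems and the Bernstein sets at each level are pairwise disjoint, this is well-defined; and since stems reach arbitrarily high levels below $\lambda$ by a standard density argument using ever-branching, $\dot F_G$ really is a function defined on all of $\Lim(\lambda)$ in $V[G]$.

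The main step is the density claim that for each $\gamma^* < \kappa$ the set
\[
D_{\gamma^*} = \{ q \in \bP \such \exists \alpha \in \Lim(\lambda),\, \Stem(q) \rest \alpha \in B_{\alpha,\gamma^*}\}
\]
is dense. Given $p \in \bP$, Lemma~\ref{silver_counting} supplies a limit $\alpha > \dom(\Stem(p))$ with $\gamma^* < \mu_\alpha$ and $|[p \rest \alpha]| = \mu_\alpha$; Lemma~\ref{perfect_equivalence} then delivers a perfect subtree $T' \subseteq p \rest \alpha$ whose branch set $[T']$ lies in $\cA(\alpha)$, so the Bernstein property yields some $b \in [T'] \cap B_{\alpha,\gamma^*}$. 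The $(<\!\lambda)$-closure of $p$ promotes the length-$\alpha$ limit $b$ to an actual node of $p$, and ever-branching then produces a splitting node $t' \sqsupseteq b$; setting $q = p \rest t'$ gives a condition in $\bP$ with $\Stem(q) \rest \alpha = b \in B_{\alpha,\gamma^*}$. This argument is uniform across the perfect versions of $\M_\kappa$ and $\Laver_\kappa$ to which Theorem~\ref{answer} applies.

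Once density is in hand, genericity ensures $G \cap D_{\gamma^*} \neq \emptyset$ for each $\gamma^* < \kappa$, so every ordinal below $\kappa$ appears in the range of $\dot F_G$; thus $\dot F_G$ is a surjection from a set of size $\lambda$ onto $\kappa$, and $\kappa$ has cardinality at most $\lambda$ in $V[G]$. The primary technical obstacle is the bookkeeping in the third paragraph---namely verifying that the branch $b$ extracted from a perfect subtree of $p \rest \alpha$ extends to a legitimate splitting node in whichever version of $\bP$ is under consideration---but this uses exactly the closure and ever-branching properties that already powered the proof of Theorem~\ref{answer}, so no new ideas beyond recycling that bookkeeping are required.
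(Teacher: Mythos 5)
Your proposal is correct and follows essentially the same route as the paper: the paper defines the very same name (the pairs $\langle(\alpha,\gamma),p\rangle$ with $\Stem(p)\rest\alpha\in B_{\alpha,\gamma}$) and observes that each $D_\gamma$ is dense, with the density argument being exactly your combination of Lemma~\ref{silver_counting}, the Bernstein property of $B_{\alpha,\gamma}$, and the $(<\!\lambda)$-closure of conditions. Your write-up merely makes explicit the well-definedness and totality bookkeeping that the paper leaves implicit by reference to the proof of Theorem~\ref{answer}.
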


\begin{proof}  For each $\alpha < \lambda$ we choose
a set $\cA(\alpha)$ and a set $\{B_{\alpha, \beta} \such \beta < \mu_\alpha \}$ of disjoint Bernstein sets as above.  
We define a $\bP$-name for a function from
$\Lim(\lambda)$ onto $\kappa$:
\begin{align*}
 \dot{f} = & \{\langle\check{(\alpha, \gamma)}, p \rangle \such
\alpha \in \Lim(\lambda), \gamma < \mu_\alpha, \dom(\Stem(p)) \geq \alpha, \Stem(p) \nrest \alpha \in B_{\alpha,\gamma}\}.
\end{align*}
The argument that $\Vdash_\bP ``\dot f : \Lim(\lambda) \twoheadrightarrow \kappa$'' is analogous to the one presented in the proof of Theorem~\ref{answer}.
For $\gamma < \kappa$ the set
\[
D_\gamma = \{p \in \bP \such \exists \alpha \in \Lim( \lambda),
p \Vdash \dot{f}(\check{\alpha}) = \check{\gamma} \}
\]
is a dense subset of $\bP$.
\end{proof}

Observe that the Bernstein technique does not work for pre-perfect trees.  In Section \ref{SubS4.4} we will show failure of $(\lambda, 2)$-distributivity for the pre-perfect versions using a different name.

\subsection{Non-Minimality and Cardinal Preservation for the $(<\!\cf(\kappa))$-Closed Versions}
\label{SubS3.2}

In this section we settle natural questions of minimality and cardinal collapses for ${}^{\textup{scl}}\M_\kappa^\textup{club}$ and $\Laver_\kappa^\textup{club}$ given singular $\lambda$-strong limits of cofinality $\lambda>\omega$. 

The first observation is a corollary to Theorem~\ref{lambda_onto_kappa}.
 For $\lambda=\kappa = \omega$, Groszek \cite[Theorem 5]{Groszek} proved an analogous result. For $\lambda = \omega < \kappa$, the technique of Brendle et al$.$ \cite[Proposition 77]{UncountableCichon} shows that 
  ${}^{\rm  scl}\M_\kappa^\textup{club}$ and $\Laver_\kappa^\textup{club}$ add an $\omega$-Cohen real and hence are not minimal for $\omega$-sequences.
Note that Theorem~\ref{waiving_meas} does not generalize to uncountable $\lambda$ because we do not have the $(\lambda,\nu)$-distributivity needed for Step 2 of Claim~\ref{sharp-node-miller}.

\begin{corollary}\label{closed-non-minimality} Suppose that $\bP$ is either ${}^{\rm  scl}\M_\kappa^\textup{club}$ or $\Laver_\kappa^\textup{club}$ and that $\kappa$ is a singular $\lambda$-strong limit of cofinality $\lambda>\omega$. Then $\Add(\lambda,1) \lessdot \bP$, and so $\bP$ is not minimal for $\lambda$-sequences.\end{corollary}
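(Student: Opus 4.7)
The plan is to exhibit an $\Add(\lambda,1)$-generic inside the $\bP$-extension by reusing the Bernstein sets $\{B_{\alpha,\gamma} : \gamma < \mu_\alpha\}$ built in the proof of Theorem~\ref{lambda_onto_kappa}, and then to harvest non-minimality from a cardinal count. After fixing a bijection $b \colon \Lim(\lambda) \to \lambda$, I take $\dot F \colon \lambda \to 2$ to be the $\bP$-name patterned on \eqref{binary-bernstein-name} from the proof of Theorem~\ref{answer}: for $p \in \bP$ with $\dom(\Stem(p)) \geq \alpha$, the condition $p$ decides $\dot F(b(\alpha)) = 0$ iff $\Stem(p) \nrest \alpha \in B_{\alpha,0}$. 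The aim is to show that $\dot F$ is $\Add(\lambda,1)$-generic over $V$, which delivers the desired complete embedding.

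To verify genericity, I would fix a maximal antichain $A \subseteq \Add(\lambda,1)$ in $V$ and a condition $p \in \bP$, and locate $q \leq p$ with $q \Vdash \check s \subseteq \dot F$ for some $s \in A$. The partial function $\sigma_0 \in \Add(\lambda,1)$ already decided by $p$'s stem has support of size $<\lambda$, so maximality of $A$ furnishes some $s \in A$ compatible with $\sigma_0$. Enumerating $\dom(s) \setminus \dom(\sigma_0)$ as $\{\gamma_\xi : \xi < \theta\}$ with $\theta < \lambda$, I recursively strengthen $p$ along a $\leq$-decreasing sequence $p = p_0 \geq p_1 \geq \cdots$ in which $p_{\xi+1}$ has its stem reach past $b^{-1}(\gamma_\xi)$ and lie inside $B_{b^{-1}(\gamma_\xi),0}$ or its complement according to $s(\gamma_\xi)$; the $(<\lambda)$-closure of $\bP$ from Observation~\ref{closure_of_the_forcing} absorbs limit stages, and the terminal $q$ forces $\check s \subseteq \dot F$.

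Non-minimality is then a cardinal count: since $\kappa$ is a $\lambda$-strong limit, $|\Add(\lambda,1)| = 2^{<\lambda} < \kappa$, so $\Add(\lambda,1)$ preserves $\kappa$, whereas $\bP$ collapses $\kappa$ by Theorem~\ref{lambda_onto_kappa}; the sub-extension by the $\Add(\lambda,1)$-generic thus sits properly between $V$ and $V[G]$, and being generated by a fresh $\lambda$-sequence, it defeats minimality for $\lambda$-sequences.

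The main obstacle will be justifying Bernstein applicability $[p_\xi \nrest b^{-1}(\gamma_\xi)] \in \cA(b^{-1}(\gamma_\xi))$ at each successor step, i.e.\ that both $B_{\alpha,0}$ and its complement actually meet $[p_\xi \nrest \alpha]$. For $\bP = \Laver_\kappa^\textup{club}$ this is uniform because every node above the stem has club-size $\OSucc$, delivering branches of size $\mu_\alpha$ at every limit level past the stem. For $\bP = {}^{\textup{scl}}\M_\kappa^\textup{club}$ Lemma~\ref{silver_counting} only guarantees the full-size condition on a condition-dependent Silver club, and since strengthening can only shrink this club, one must prepare $p$ more delicately---either by a preparatory $\lambda$-length fusion (legal via Lemma~\ref{fusion_lemma}) whose splitting pattern forces the Silver club of the resulting condition to cover $b[\dom(s)]$, or by reformulating $\dot F$ so its index set runs along the Silver club of the generic (enumerated in order type $\lambda$), thereby sidestepping the issue.
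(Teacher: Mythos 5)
Your route is genuinely different from the paper's, and it has gaps that are not merely cosmetic. The paper's proof uses no explicit name at all: it combines Theorem~\ref{lambda_onto_kappa} (in $V[G]$ the ground-model power set of $\Add(\lambda,1)$ has size $\lambda$) with the $(<\!\lambda)$-closure and hence $(<\!\lambda)$-distributivity of $\bP$ from Observation~\ref{closure_of_the_forcing} to build, inside $V[G]$, a descending $\lambda$-sequence in $\Add(\lambda,1)$ meeting every ground-model dense set; a folklore homogeneity argument then converts ``every $\bP$-generic induces an $\Add(\lambda,1)$-generic'' into a complete embedding. Your closing cardinal count (the subextension is proper because $\Add(\lambda,1)$ is too small to collapse $\kappa$ while $\bP$ does collapse it) is correct and is a fine way to get non-minimality from the embedding. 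The problem is the genericity verification for the Bernstein name $\dot F$.

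Two concrete gaps. First, for ${}^{\textup{scl}}\M_\kappa^{\textup{club}}$ the obstacle you flag is not repaired by either of your suggested fixes. The levels carrying splitting nodes of a condition $p$ need only form an unbounded subset $L$ of $\lambda$, and there is an unbounded set $Y\subseteq\Lim(\lambda)\setminus\acc(L)$ in $V$; for $\alpha\in Y$ every node of $p\nrest\alpha$ of level in $[\sup(L\cap\alpha),\alpha)$ is non-splitting, so $p\nrest\alpha$ has no $\kappa_\beta$-splitting node for large $\beta<\alpha$, hence no perfect subtree, and the same is inherited by every $q\le p$. Thus $[q\nrest\alpha]\notin\cA(\alpha)$ for all $q\le p$ and all $\alpha\in Y$: no ``preparation'' can make the Silver club of Lemma~\ref{silver_counting} cover a prescribed index set, the Bernstein property is simply unavailable at those coordinates, and $p$ may even outright decide $\dot F\circ b$ on all of $Y$ (when the small set $[p\nrest\alpha]$ lies inside $B_{\alpha,0}$ or inside its complement), which would force $\dot F\rest b[Y]\in V$ and destroy genericity outright. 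The alternative of indexing along ``the Silver club of the generic'' is not a well-defined $\bP$-name, since that club shrinks as conditions are strengthened. Second, even in the $\Laver_\kappa^{\textup{club}}$ case, $(<\!\lambda)$-closure does not ``absorb'' the limit stages of your steering recursion: at a limit stage the stem's restriction to the limit level $\alpha=\sup_\xi b^{-1}(\gamma_\xi)$ is already determined by the earlier choices, and if $b(\alpha)\in\dom(s)$ you have no control over which side of $B_{\alpha,0}$ it lands on. Repairing this requires carrying a perfect subtree through the recursion so that the Bernstein property can be invoked at the limit level, which is exactly the content of Lemma~\ref{extensions}; note that even there the limit case is handled via an $\omega$-sequence converging to $\gamma$, so the construction as written covers only $\lambda=\omega_1$, whereas the corollary is claimed for all uncountable $\lambda$. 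The paper's abstract argument sidesteps all of these issues.
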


\begin{proof} This follows from two folklore lemmas. The first is that for homogenous forcings $\bQ_1$ and $\bQ_2$, there is a complete embedding $\iota: \bQ_1 \to \ro(\bQ_2)$ if and only if every filter $G$ that is $\bQ_2$-generic over $V$ induces a filter $H$ that is $\bQ_1$-generic over $V$. The second is that if $\bQ_1$ is $(< \! \lambda)$-closed and $\bQ_2$ is a filter such that $V[\bQ_2] \models ``|\mathcal{P}^V(\bQ_1)|=\lambda$'', then any $\bQ_2$-generic induces a $\bQ_1$-generic.

By Theorem~\ref{lambda_onto_kappa}, we have $V[\bP] \models ``|\mathcal{P}^V(\Add(\lambda,1))| =| (2^\lambda)^V|=\lambda$'', so we conclude that forcing with either ${}^\textup{scl}\M_\kappa^\textup{club}$ or $\Laver_\kappa^\textup{club}$ induces a generic on $\Add(\lambda,1)$.
\end{proof}

The rest of the section is dedicated to proving that the closed versions preserve $\kappa^+$ as well as the cardinals up to and included $\lambda$.


We prove a lemma on $\leq_\alpha$-stronger decision into boundedly-many possibilities. There are analogs used in many places, e.g.\ \cite{Kanamori-higher-sacks}.


\begin{theorem}\label{analogon_to_Kanamori}
Let $p \in \bP$ force that $\dot{f}$ be a $\bP$-name for a function from $\lambda$ into $\kappa^+$. Then there are a set $x$  of size at most $\kappa$  and a condition $q \leq p$ such that
\[
q \Vdash \rge(\dot{f}) \subseteq \check{x}.
\]
\end{theorem}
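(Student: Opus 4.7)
The plan is to build, via fusion, a condition $q \le p$ that forces $\dot f(\alpha)$ into a ground-model set $x_\alpha$ of size at most $\kappa$ for each $\alpha < \lambda$. Since $\lambda < \kappa$, the union $x = \bigcup_{\alpha<\lambda} x_\alpha$ then has cardinality at most $\kappa$, and $q \Vdash \rge(\dot f) \subseteq \check x$ as required.

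Construct $\langle p_\alpha : \alpha < \lambda \rangle$ recursively with $p_0 = p$. At a successor stage $\alpha+1$, for each $s \in \Split_\alpha(p_\alpha)$ and each $\beta \in \OSucc_{p_\alpha}(s)$, pick $q_{s,\beta} \le p_\alpha \rest (s \concat \langle \beta \rangle)$ in $\bP$ that decides $\dot f(\alpha)$ to some value $v_{s,\beta} < \kappa^+$. Then define $p_{\alpha+1}$ by keeping $p_\alpha$ up through $\Split_\alpha(p_\alpha)$ and replacing the subtree below each $s \concat \langle \beta \rangle$ by $q_{s,\beta}$. Since $\OSucc_{p_{\alpha+1}}(s) = \OSucc_{p_\alpha}(s)$ for each $s \in \Split_\alpha(p_\alpha)$, the relation $p_{\alpha+1} \le_\alpha p_\alpha$ holds, the club- or cardinality-splitting condition at each splitting node of $p_\alpha$ is preserved, and so $p_{\alpha+1} \in \bP$. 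At limit stages $\delta < \lambda$ set $p_\delta = \bigcap_{\alpha<\delta} p_\alpha$, a condition by Lemma~\ref{fusion_lemma}; finally let $q = \bigcap_{\alpha<\lambda} p_\alpha \in \bP$ by the same lemma.

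For each $\alpha < \lambda$, the family $A_\alpha := \{ p_{\alpha+1} \rest (s \concat \langle \beta \rangle) : s \in \Split_\alpha(p_\alpha),\ \beta \in \OSucc_{p_\alpha}(s) \}$ is predense below $p_{\alpha+1}$: by $(<\lambda)$-(splitting-)closure every cofinal branch of $p_{\alpha+1}$ meets $\Split_\alpha(p_{\alpha+1}) = \Split_\alpha(p_\alpha)$ and hence passes through one of its successors. Since $q \le p_{\alpha+1}$, any generic containing $q$ contains some element of $A_\alpha$, so $q \Vdash \dot f(\alpha) \in x_\alpha := \{ v_{s,\beta} : s, \beta \}$. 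For the count, every node of $p_\alpha$ sits at some level $\xi < \lambda$, and the $\lambda$-strong-limit hypothesis gives $|p_\alpha(\xi)| \le \prod_{\gamma < \xi} \kappa_\gamma \le (\sup_{\gamma<\xi}\kappa_\gamma)^{|\xi|} < \kappa$; summing over $\xi < \lambda$ yields $|\Split_\alpha(p_\alpha)| \le \kappa$, and each $|\OSucc_{p_\alpha}(s)| \le \kappa_{\dom(s)} < \kappa$, so $|x_\alpha| \le \kappa$. Therefore $|x| \le \lambda \cdot \kappa = \kappa$.

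The main obstacle is the verification that the successor-step surgery respects the axioms defining $\bP$, in particular the $(<\lambda)$-splitting-closure for ${}^{\textup{scl}}\M_\kappa^\textup{club}$. Because the surgery happens strictly above the $\Split_\alpha(p_\alpha)$ level and leaves the $\OSucc$-sets of nodes in $\Split_\alpha(p_\alpha)$ untouched, this reduces to two routine points: the existence of $q_{s,\beta}$ below $p_\alpha \rest (s \concat \langle \beta \rangle)$ deciding $\dot f(\alpha)$ (any $\bP$-name for an ordinal is decided by some stronger condition), and the preservation of splitting-closure under the intersections used in the fusion, which follows the template of Observation~\ref{closure_of_the_forcing}.
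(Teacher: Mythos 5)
Your proposal is correct and follows essentially the same route as the paper: a fusion sequence in which, at each stage $\alpha$, one decides $\dot f(\alpha)$ below every immediate successor of every node in $\Split_\alpha$, glues the resulting conditions back together to get a $\le_\alpha$-extension, collects the (at most $|\Split_\alpha|\cdot\kappa_{\alpha}\le\kappa$ many) decided values into $x_\alpha$, and takes intersections at limits using the closure of the forcing. The paper isolates the successor step as a separate claim and performs the decision step after the intersection at limit stages rather than shifting it to the next successor stage, but these are only cosmetic differences.
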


\begin{proof} First we establish a claim that works for any of the ever-splitting versions:

\textbf{Claim:} Let $p \in {}^{\textup{scl}}\Miller_{\kappa}^\textup{club}$ force that $\dot{\beta}$ is an ordinal and let $\alpha < \lambda$. Then there is some $q \leq_\alpha p$ and some set $x$ of size at most 
$|\Split_\alpha(p)| \cdot \kappa_\alpha$ such that $q \Vdash \dot{\beta} \in x$.


To prove the claim, consider $\alpha< \kappa$. By definition, $\Split_\alpha(p)$ is a maximal antichain. For each 
$s \in \Split_\alpha(p)$ we proceed as in the step $\alpha=0$, i.e.,  for each $t \in \Succ_p(s)$ we choose
some $q(t,s) \leq p \nrest t$ that decides $\dot{\beta}$ and we let
$x_s$ be a set containing all these decisions. We let 
$x = \bigcup\{x_s \such s \in \Split_\alpha(p)\}$. 
The size of $\Split_\alpha(p)$ is bounded by $\sup_{\beta < \alpha} \mu_\beta \leq \kappa$ by our assumptions on cardinal exponentiation in Definitions~\ref{the_forcing} and \ref{more_forcings}. 
Each $x_s$ has size at most $\kappa_{\dom(s)} < \kappa$.
Thus $|x| \leq |\Split_\alpha(p)| \cdot \kappa \leq \kappa$
We let 
\[
q = \bigcup\{q(t,s) \such s \in \Split_\alpha(p) \wedge t \in \Succ_p(s)\}.
\]
Then $q \leq_{\alpha} p$ and $q \Vdash \dot{\beta} \in \check{x}$. This proves the claim.


Given $p$, assume without loss of generality that $p \Vdash \dot f \colon \lambda \to \kappa^+$. Then construct a fusion sequence 
$\langle q_\alpha \such \alpha < \lambda \rangle$ such that 
there are $x_\alpha$ of size at most $\kappa$ such that 
$q_\alpha \Vdash \dot{f}(\alpha) \in x_\alpha$.

We choose the $q_\alpha$ be recursion on $\alpha$. In the successor steps we proceed according to the claim. In the limit steps $\alpha < \kappa$, we first let
$q'_\alpha = \bigcap\{q_\beta \such \beta < \alpha\}$.
By closure, $q'_\alpha$ is a condition. Now we perform a successor step to go from $q'_\alpha$ to $q_\alpha$.

The fusion limit $q = \bigcap \{q_\alpha \such \alpha < \lambda\}$ has the desired properties.
\end{proof}

Although the previous lemma applies to the closed version, there is another way of obtaining preservation of $\kappa^+$ for $\Laver^\textup{club}_\kappa$:

\begin{proposition} \label{chain} The singular club-Laver-Namba forcing $\Laver_\kappa^\textup{club}$ has the $\kappa^+$-chain condition.\end{proposition}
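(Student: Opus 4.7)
The plan is to prove the chain condition by pigeonhole on stems. The key observation is that the ambient space $N(\vec\kappa)$ of possible stems has cardinality at most $\kappa$: for each $\alpha<\lambda$ one has $|\prod_{\beta<\alpha}\kappa_\beta|\le \kappa_\alpha^{|\alpha|}\le \kappa_\alpha^\lambda<\kappa$ by the $\lambda$-strong limit hypothesis, and there are only $\lambda\le\kappa$ many values of $\alpha$ to sum over. So any family $\{p_\xi \such \xi<\kappa^+\}$ of conditions admits a subfamily of size $\kappa^+$ whose elements all share a single common stem $s$.

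The main step is then to show that any two conditions $p,q\in\Laver_\kappa^\textup{club}$ with $\Stem(p)=\Stem(q)=s$ are compatible, in fact with $p\cap q$ as a common lower bound. Verifying that $p\cap q\in\Laver_\kappa^\textup{club}$ requires three things. First, $s$ is still the unique stem of $p\cap q$: every node of the intersection is comparable with $s$ since $s$ is the stem of each factor, and every proper initial segment of $s$ has exactly one successor in $p$ and in $q$, hence in the intersection. Second, $p\cap q$ is $(<\!\lambda)$-closed: if $\langle t_\xi \such \xi<\delta\rangle$ with $\delta<\lambda$ is $\sqsubseteq$-increasing in $p\cap q$, then its union lies in $p$ and in $q$ individually by their respective closure, hence in $p\cap q$. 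Third---and this is the crucial point---for each $t\in p\cap q$ with $t\sqsupseteq s$ we have $\OSucc_{p\cap q}(t)=\OSucc_p(t)\cap\OSucc_q(t)$, and since $\OSucc_p(t)$ and $\OSucc_q(t)$ each contain a club in the regular uncountable cardinal $\kappa_{\dom(t)}$, their intersection again contains a club; in particular it has full cardinality $\kappa_{\dom(t)}$, matching the Laver splitting requirement.

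There is no genuine obstacle; the whole argument collapses to the two facts ``$\lambda$-strong limit bounds the space of stems'' and ``two clubs in a regular uncountable cardinal intersect in a club.'' It is worth noting that the \emph{club} hypothesis is essential for this short argument: for the plain $\Laver_\kappa$ (Definition~\ref{more_forcings}) the intersection of two full-size splitting sets need not have full size, so the analogous pigeonhole reasoning does not directly yield the chain condition, and a different argument would be needed.
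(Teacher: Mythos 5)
Your proof is correct and is essentially the paper's own argument: the paper's proof is the one-line observation that two conditions with the same stem are compatible and that there are only $\kappa$-many stems. You have simply filled in the details (the $\lambda$-strong limit count of stems and the fact that two clubs in a regular uncountable $\kappa_{\dom(t)}$ intersect in a club), and your closing remark that the club hypothesis is what makes the intersection argument work is an accurate observation.
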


\begin{proof} Observe that two conditions in the club-Laver-Namba forcing are compatible if they have the same stem, and there are only $\kappa$-many stems.\end{proof}

Theorem~\ref{lambda_onto_kappa} on collapsing $\kappa$ to $\lambda$ pertains also to the $(<\!\lambda)$-closed versions of the forcings from this section.
Combining Theorem~\ref{analogon_to_Kanamori} with Theorem~\ref{lambda_onto_kappa} about collapsing
 $\kappa$ to $\lambda$
(for which we used $\lambda$-strong limits)  we get:

\begin{corollary}\label{harmless}
For  any $\lambda$-strong limit $\kappa$ and $\bP_\kappa$
being
 ${}^{\textup{scl}}\Miller_{\kappa}^\textup{club}$ or $\Laver_\kappa^\textup{club}$, we have
\[
\bP_{\kappa} \Vdash |\kappa| = \lambda \wedge \lambda^+ = (\kappa^+)^V
\]
and all cardinals $\leq\lambda$ are preserved.
\end{corollary}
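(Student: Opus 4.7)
The plan is to assemble the three preceding results: the $(<\lambda)$-closure from Observation~\ref{closure_of_the_forcing}, the collapse $\kappa \twoheadrightarrow \lambda$ from Theorem~\ref{lambda_onto_kappa}, and the bounding lemma from Theorem~\ref{analogon_to_Kanamori}. First I would invoke Observation~\ref{closure_of_the_forcing} to note that both ${}^{\textup{scl}}\Miller_{\kappa}^\textup{club}$ and $\Laver_\kappa^\textup{club}$ are $(<\lambda)$-closed, hence add no new sequence of length $<\lambda$ of ordinals. In particular, for every cardinal $\mu \leq \lambda$ any potential surjection in $V[G]$ from some $\nu < \mu$ onto $\mu$ would already lie in $V$, where $\mu$ is a cardinal; so all cardinals $\leq \lambda$ are preserved, including $\lambda$ itself.

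Next, Theorem~\ref{lambda_onto_kappa} directly supplies $V[G] \models |\kappa| = \lambda$, which automatically collapses every $V$-cardinal in the interval $[\lambda,\kappa]$ down to $\lambda$. The remaining step, which I expect to be the essential one, is to show that $(\kappa^+)^V$ survives as a cardinal in $V[G]$; combined with the preservation of cardinals $\leq \lambda$ and the collapse of the interval $[\lambda,\kappa]$, this will identify $(\kappa^+)^V$ as the immediate successor of $\lambda$ in $V[G]$, i.e.\ $(\lambda^+)^{V[G]} = (\kappa^+)^V$.

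To verify that $(\kappa^+)^V$ is still a cardinal in $V[G]$, I would argue by contradiction: suppose some $p \in \bP_\kappa$ forces that $\dot g \colon \lambda \twoheadrightarrow (\kappa^+)^V$ is a surjection. Applying Theorem~\ref{analogon_to_Kanamori} to $\dot g$ yields $q \leq p$ and a set $x \subseteq (\kappa^+)^V$ of size at most $\kappa$ such that $q \Vdash \rge(\dot g) \subseteq \check x$. Since $|x| \leq \kappa < (\kappa^+)^V$, this contradicts $q$ forcing $\dot g$ to be surjective. Thus no surjection from $\lambda$ onto $(\kappa^+)^V$ exists in $V[G]$, and since all $V$-cardinals strictly between $\lambda$ and $(\kappa^+)^V$ have already been collapsed to $\lambda$, the ordinal $(\kappa^+)^V$ is still a cardinal in $V[G]$, necessarily equal to $(\lambda^+)^{V[G]}$. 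There is no genuine obstacle beyond this: the corollary is a packaging of the three previously established results, and the only care required is phrasing the final contradiction so that the bound $|x|\leq\kappa$ provided by Theorem~\ref{analogon_to_Kanamori} is compared with $(\kappa^+)^V$ rather than $\kappa$.
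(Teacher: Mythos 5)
Your proposal is correct and follows essentially the same route as the paper, which presents this corollary precisely as the combination of Theorem~\ref{lambda_onto_kappa} (the collapse of $\kappa$ to $\lambda$), Theorem~\ref{analogon_to_Kanamori} (bounding the range of any $\dot g\colon\lambda\to\kappa^+$ by a set of size $\leq\kappa$, so $(\kappa^+)^V$ survives), and the $(<\!\lambda)$-closure from Observation~\ref{closure_of_the_forcing} for preservation of cardinals $\leq\lambda$. Your write-up merely makes explicit the details the paper leaves to the reader.
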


\subsection{The Non-Miniminality of a Non-Closed Version}

The aim of this section is to show that we do not necessarily need closure to obtain non-minimality results.

\begin{theorem}\label{aleph1-embedding}  Let $\kappa$ be a singular cardinal of cofinality $\omega_1$. Let $\bP$ be any of $\Laver_\kappa$, $\Laver_\kappa^\textup{stat}$, and $\Laver_\kappa^\textup{club}$. Then $\Add(\omega_1,1) \lessdot \bP$.\end{theorem}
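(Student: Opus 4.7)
The strategy is to follow the template of the proof of Corollary~\ref{closed-non-minimality} even though the non-closed Laver variants $\Laver_\kappa$ and $\Laver_\kappa^\textup{stat}$ lack the $(<\omega_1)$-closure used there. The case $\bP = \Laver_\kappa^\textup{club}$ is already covered by that corollary, so the new content is replicating its recipe for the two non-closed cases. The key inputs are: (i) Theorem~\ref{lambda_onto_kappa}'s Bernstein argument applies to all three Laver versions (as noted in the remark immediately after its proof), so each forcing collapses $\kappa$ to $\omega_1$ under the implicit $\omega_1$-strong-limit assumption on $\kappa$; (ii) the poset $\Add(\omega_1, 1)$ is $(<\omega_1)$-closed in $V$; and (iii) $\omega_1$ is preserved by $\bP$ (Corollary~\ref{harmless}).

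With (i) in hand, we have $V[\bP] \models |\mathcal{P}^V(\Add(\omega_1, 1))| = \omega_1$, since $(2^{\omega_1})^V < \kappa$ is collapsed to $\omega_1$. So in $V[\bP]$ we may enumerate the $V$-maximal antichains of $\Add(\omega_1, 1)$ as $\langle A_\xi : \xi < \omega_1 \rangle$ and build a descending sequence $\langle d_\xi : \xi < \omega_1 \rangle$ of conditions in $\Add(\omega_1,1)^V$ with $d_\xi$ below an element of $A_\xi$: at successor stages this uses only that $A_\xi$ is maximal, and at limit stages we take unions, relying on the $(<\omega_1)$-closure of $\Add(\omega_1, 1)$. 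The induced filter $H = \{ d \in \Add(\omega_1,1)^V : d_\xi \leq d \text{ for some } \xi \}$ is then $\Add(\omega_1, 1)$-generic over $V$. Weak homogeneity of $\bP$ (two conditions sharing a stem are compatible, providing ample automorphisms) combined with the standard weak homogeneity of $\Add(\omega_1,1)$ yields the complete embedding $\Add(\omega_1, 1) \lessdot \bP$ via the folklore equivalence invoked in the proof of Corollary~\ref{closed-non-minimality}.

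The principal obstacle, and the only point where the argument must exceed that of Corollary~\ref{closed-non-minimality}, is justifying the limit step of the recursion when $\bP$ is not $(<\omega_1)$-closed: the union of an $\omega$-subsequence of $V$-conditions, indexed in $V[\bP]$, need not itself lie in $V$. I plan to handle this by fixing in $V$ a wellorder of $\Add(\omega_1, 1)$ and, at each stage $\xi$, choosing $d_\xi$ to be the $\leq^V$-least element of $A_\xi$ that refines all prior $d_\eta$. Because this rule is $V$-definable from the parameters $(A_\xi, \langle d_\eta \rangle_{\eta<\xi})$ and each $A_\xi$ is a $V$-set, the resulting sequence at any limit stage $\delta < \omega_1$ is canonically determined by the $V$-definable recursion applied to the $V[\bP]$-enumeration; its union is then computed by a $V$-absolute operation from $V$-parameters, so remains in $\Add(\omega_1,1)^V$. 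Since $\omega_1^V = \omega_1^{V[\bP]}$, the recursion runs through all $\omega_1$ antichains and the induced $H$ is genuinely $V$-generic, completing the proof.
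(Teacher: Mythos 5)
There is a genuine gap, and it sits exactly at the point you flagged. Your plan is to rerun the recipe of Corollary~\ref{closed-non-minimality}: collapse $\mathcal{P}^V(\Add(\omega_1,1))$ to $\omega_1$ and then build, in $V[\bP]$, a descending $\omega_1$-sequence of $V$-conditions meeting all $V$-maximal antichains. The limit stages of that recursion need the union of a countable descending sequence of conditions of $\Add(\omega_1,1)^V$ to again be a condition \emph{of the ground model}. That is automatic when $\bP$ adds no new $\omega$-sequences, which is why the corollary is stated only for the $(<\lambda)$-closed versions. But $\Laver_\kappa$ and $\Laver_\kappa^{\textup{stat}}$ are \emph{not} $\omega$-distributive (Theorems~\ref{5.16_balcar_simon} and~\ref{not_omega_distr_2}), so they add new $\omega$-sequences of ground-model objects. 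Your patch --- choosing the $\leq^V$-least refinement at each stage so that the recursion is ``$V$-definable'' --- does not repair this: the recursion is applied to the enumeration $\langle A_\xi : \xi<\omega_1\rangle$ of the $V$-maximal antichains, and that enumeration exists only in $V[\bP]$. At a limit stage $\delta$ the countable sequence $\langle A_\eta : \eta<\delta\rangle$, hence $\langle d_\eta : \eta<\delta\rangle$, is a new $\omega$-sequence of $V$-sets; there is no $V$-absolute operation producing its union from $V$-parameters, and the union can fail to lie in $\Add(\omega_1,1)^V$. (Two smaller problems: Corollary~\ref{harmless} gives preservation of $\omega_1$ only for the closed versions, and ``same stem implies compatible'' holds for the club version but not for $\Laver_\kappa$ or $\Laver_\kappa^{\textup{stat}}$, where intersections of full-size or stationary successor sets can be small.) The remark in the paper immediately after this theorem's proof says explicitly that the construction of Corollary~\ref{closed-non-minimality} cannot be generalized; that observation is what motivates Section~\ref{balcar-simon-sec}.

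The paper's actual proof is a direct construction of the complete embedding from the Bernstein name $\dot f$ of Theorem~\ref{answer}: each $p$ decides $\dot f(\omega\cdot\alpha)$ exactly for the countably many limit ordinals below the length of its stem, giving a projection $p\mapsto b(p)$ into $\Add(\omega_1,1)$; the key technical step is Lemma~\ref{extensions}, showing that every $b\leq b(\bar p)$ is realized as $b(p)$ for some $p\leq\bar p$ by an induction along limit ordinals in which, at limits of limits, one builds a perfect subtree in $\omega$ steps and then uses the Bernstein property to hit or avoid $B_{\gamma,0}$. This is where cofinality $\omega_1$ is genuinely used, and it is the ingredient your proposal is missing. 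If you want to salvage your outline, you would need to prove something like Lemma~\ref{extensions} anyway, at which point the direct embedding is the shorter path.
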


For the rest of this section we fix $\kappa$, a singular cardinal of cofinality $\aleph_1$. We will work with $\Laver_\kappa^\textup{}$ for simplicity.

\begin{definition} Let $\dot f$ be the binary function \ref{binary-bernstein-name} introduced in the proof of Theorem~\ref{answer}. Given $ p \in \Laver_\kappa$, let $b(p)$ be defined so that $\alpha \in \dom(b(p))$ and $b(p)(\alpha) = i$ if and only if $p \Vdash \dot f(\omega \cdot \alpha) = i$.\end{definition}

\begin{proposition} If $p \in \Laver_\kappa$, then $b(p) \in \Add(\omega_1,1)$.\end{proposition}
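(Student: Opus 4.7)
The plan is to check the two defining properties of $\Add(\omega_1,1)$ for $b(p)$: that its range is contained in $2$, which is immediate from the definition, and that its domain is a countable subset of $\omega_1$. Since $\Stem(p)\in N(\vec\kappa)$ and $\lambda=\omega_1$, the ordinal $\dom(\Stem(p))$ is countable. It therefore suffices to establish the containment $\dom(b(p))\subseteq\{\alpha:\omega\cdot\alpha\leq\dom(\Stem(p))\}$, whose right-hand side is a countable set of ordinals below $\omega_1$.

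The main step is the contrapositive: if $\omega\cdot\alpha>\dom(\Stem(p))$, then $p$ does not decide $\dot f(\omega\cdot\alpha)$. Above $\Stem(p)$, every node $t$ of a condition in $\Laver_\kappa$ has $\kappa_{\dom(t)}$ immediate successors; combined with the $(<\omega_1)$-closure of conditions, this forces $p\nrest(\omega\cdot\alpha)$ to be a perfect tree in $\bigcup_{\gamma<\omega\cdot\alpha}\prod_{\beta<\gamma}\kappa_\beta$ whose set of cofinal branches coincides with the level set $p(\omega\cdot\alpha)$. By Lemma~\ref{perfect_equivalence}, applicable since $\kappa$ is an $\omega_1$-strong limit and hence satisfies $\tau^{\omega\cdot\alpha}\leq\kappa$ for all $\tau<\kappa$, this branch set has cardinality $\mu_{\omega\cdot\alpha}$, so $p(\omega\cdot\alpha)\in\cA(\omega\cdot\alpha)$.

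The $\cA(\omega\cdot\alpha)$-Bernstein property of the family $\{B_{\omega\cdot\alpha,\gamma}:\gamma<\mu_{\omega\cdot\alpha}\}$ then supplies $t_0\in p(\omega\cdot\alpha)\cap B_{\omega\cdot\alpha,0}$ and $t_1\in p(\omega\cdot\alpha)\cap B_{\omega\cdot\alpha,1}$, the latter of which lies outside $B_{\omega\cdot\alpha,0}$ by disjointness of the Bernstein sets. Each restriction $p\rest t_i$ is a condition in $\Laver_\kappa$ with $\Stem(p\rest t_i)=t_i$, so the defining clauses \eqref{binary-bernstein-name} of $\dot f$ give $p\rest t_0\Vdash\dot f(\omega\cdot\alpha)=0$ and $p\rest t_1\Vdash\dot f(\omega\cdot\alpha)=1$, contradicting $p$'s decision of $\dot f(\omega\cdot\alpha)$. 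The only nontrivial ingredient is the verification $p(\omega\cdot\alpha)\in\cA(\omega\cdot\alpha)$, which reduces to a bookkeeping application of Lemma~\ref{perfect_equivalence} once one observes that Laver splitting guarantees perfection of $p\nrest(\omega\cdot\alpha)$ above $\Stem(p)$; no other substantial obstacle arises.
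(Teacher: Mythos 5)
Your proof is correct and follows essentially the same route as the paper: the domain of $b(p)$ is confined to the (countable) stem of $p$, because at any limit level $\gamma$ strictly above $\dom(\Stem(p))$ the Laver splitting makes $p\nrest\gamma$ perfect, so the Bernstein-set argument from Theorem~\ref{answer} produces two extensions of $p$ forcing different values of $\dot f(\gamma)$. You merely spell out in detail what the paper compresses into the phrase ``by the reasoning in the proof of Theorem~\ref{answer}.''
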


\begin{proof} This is the observation that if $\alpha$ is the length of the stem of $p$, then for all limit ordinals $\gamma$ in $(\alpha,\lambda)$, $p \rest \gamma$ is perfect and therefore $p$ does not decide $\dot{f}(\gamma)$ by the reasoning in the proof of Theorem~\ref{answer}.\end{proof}

\begin{lemma}\label{extensions} Suppose we are given $\bar p \in \Laver_\kappa$. If $\bar b = b(\bar p)$, then given any $b \in \Add(\omega_1,1)$ such that $b \le \bar b$, there is some $p \le \bar p $ such that $b(p) = b$.\end{lemma}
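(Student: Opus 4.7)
The plan is to construct $p \le \bar p$ by extending the stem $\bar s := \Stem(\bar p)$ of $\bar p$ to a longer node $s \in \bar p$ and setting $p := \bar p \rest s$. Writing $\gamma_0 := \dom(\bar b)$ and $\gamma := \dom(b)$, the preceding proposition places $\dom(\bar s)$ in the half-open interval $[\omega\cdot(\gamma_0-1),\omega\cdot\gamma_0)$, and I choose $\dom(s)$ in the corresponding $\omega$-block preceding $\omega\cdot\gamma$ so that $\dom(b(p)) = \gamma$ (for $\gamma = \beta+1$, any length in $[\omega\cdot\beta, \omega\cdot\beta+\omega)$ works). By the definition of $\dot f$ in~\eqref{binary-bernstein-name}, the remaining requirement is that for each $\alpha \in [\gamma_0, \gamma)$ we have $s \rest (\omega\cdot\alpha) \in B_{\omega\cdot\alpha,0}$ iff $b(\alpha) = 0$; the values $b(\alpha) = \bar b(\alpha)$ for $\alpha < \gamma_0$ are already witnessed by $\bar s \sqsubseteq s$.

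I would build $s$ by transfinite recursion, producing an $\sqsubseteq$-increasing chain $\langle s_\alpha \such \gamma_0 \le \alpha \le \gamma \rangle$ of nodes in $\bar p$ with $s_\alpha$ at level $\omega\cdot\alpha$, starting from any $s_{\gamma_0} \sqsupseteq \bar s$. At a successor step $\alpha + 1$, given $s_\alpha$, the level-$\omega\cdot(\alpha+1)$ extensions of $s_\alpha$ inside $\bar p$ are the branches of a fully splitting perfect subtree above $s_\alpha$ (since $\bar p$ is a Laver tree fully splitting above its stem); by Lemma~\ref{perfect_equivalence} this branch-set has full cardinality and therefore lies in $\cA(\omega\cdot(\alpha+1))$. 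The defining property of the Bernstein set $B_{\omega\cdot(\alpha+1),0}$ then supplies extensions on either side, and I pick $s_{\alpha+1}$ to realize the value $b(\alpha+1)$. At a limit stage $\bar\alpha$, the union $s_{\bar\alpha} := \bigcup_{\alpha<\bar\alpha} s_\alpha$ remains in $\bar p$ because each Laver condition is a $(<\!\omega_1)$-closed tree.

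The main obstacle is the Bernstein requirement at a limit $\bar\alpha \in [\gamma_0,\gamma)$: the node $s_{\bar\alpha}$ is forced by the earlier successor choices, so its membership in $B_{\omega\cdot\bar\alpha,0}$ cannot be adjusted on the fly. I would overcome this by strengthening the recursion in a fusion-like manner: at every successor stage I retain not just the point $s_\alpha$ but a perfect Laver subtree $q_\alpha \subseteq \bar p$ above $s_\alpha$ that still realizes either Bernstein-side at each subsequent level of $\omega\cdot\dom(b)$. At the limit $\bar\alpha$, the branches of $\bigcap_{\alpha<\bar\alpha} q_\alpha$ at level $\omega\cdot\bar\alpha$ still form a set of full cardinality in $\cA(\omega\cdot\bar\alpha)$, so a Bernstein application inside this reservoir lets me select $s_{\bar\alpha}$ with the correct membership before continuing. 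Once the recursion terminates at the target length, $p := \bar p \rest s$ is a Laver condition and $b(p) = b$ by construction.
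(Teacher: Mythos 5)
Your proposal is correct and takes essentially the same route as the paper: a recursion along the limit levels $\omega\cdot\alpha$ in which the successor step applies the Bernstein property of $B_{\omega\cdot\alpha,0}$ to the full-cardinality branch set (via Lemma~\ref{perfect_equivalence}) of the perfect subtree above the node built so far, with all the real work at limit stages. Your device of carrying a reservoir subtree $q_\alpha$ of good nodes and intersecting at limits is precisely the paper's construction of the perfect tree $T$ of good nodes along an $\omega$-cofinal sequence $\gamma_n \to \gamma$ (available because every limit below $\omega_1$ has countable cofinality), which is exactly the verification your assertion that the intersection's branch set ``still forms a set of full cardinality in $\cA(\omega\cdot\bar\alpha)$'' requires.
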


\begin{proof} Fix $\bar p$, $\bar b$, and $b$. Let $\alpha$ be the length of the stem $\bar s$ of $p$. Let $\delta = \dom(b)$. For $t \in \bar p$, we say that $t$ is \emph{good} if for all $\beta \in \Lim((\dom(t)+1)) \cap \delta$, $t \rest \beta \in B_{\beta,0}$ if and only if $b(\tilde \beta)=0$ where $\omega \cdot \tilde \beta = \beta$.

We will show by induction on limits $\gamma \in [\alpha+\omega,\delta]$ that:

\begin{quote} for all good nodes $s \sqsupseteq \bar s$ in $\bar p$, there is some $t \sqsupseteq s$ such that $t$ is good and $\gamma \in \dom(t)+1$.\end{quote}

The successor case of the induction pertains to ordinals of the form $\gamma+\omega$. Let $s \sqsupseteq \bar s$ be a good node of $p$ such that $\dom(s) = \bar \gamma < \gamma+\omega$. If $\gamma > \alpha$, apply the inductive hypothesis to find $s' \sqsupseteq s$ with domain $\gamma$. Because $p \in \Laver_\kappa$, $(p \rest s') \rest (\gamma+\omega)$ is perfect. Therefore we can find some $t \sqsupseteq s$ such that the set of predecessors of $t$ is a cofinal branch of $(p \rest s) \rest (\gamma + \omega)$ and $t \rest (\gamma+\omega) \in B_{\gamma+\omega,0}$ if $b(\tilde \gamma + 1)=0$ where $\omega \cdot (\tilde \gamma + 1) = \gamma+\omega$ and $t \rest (\gamma + \omega) \notin B_{\gamma+\omega,0}$ otherwise. Then $t$ is as sought.

Now suppose that $\gamma$ is a limit of limit ordinals, and in particular that $\langle\gamma_n \such n<\omega \rangle$ is a sequence of limit ordinals above $\alpha$ converging to $\gamma$. We pick a good node $s$ and define a perfect subtree $T$ of $(p \rest s) \rest \gamma$ by induction on $n$. Suppose we have define $T \rest \gamma_n$. Then for each cofinal branch $s'$ representing a cofinal branch of $T \rest \gamma_n$, choose $\kappa_{\gamma_n+1}$-many \emph{immediate} successors $s'_\xi$, $\xi<\kappa_{\gamma_n+1}$ of $s'$ in $p$ (again using that $p \in \Laver_\kappa$) which are still good nodes, and then choose good nodes $s''_\xi \sqsupseteq s_\xi$ such that $\gamma_{n+1} \in \dom s''_\xi$ for all $\xi<\kappa_{\gamma_n+1}$ using the inductive hypothesis. Then collect all such $s''_\xi$'s for each $s'$ representing a cofinal branch of $T \rest \gamma_n$ and let this form $T \rest \gamma_{n+1}$. Once we are done defining $T$, we choose a node $t$ of length $\gamma+1$ defining a cofinal branch of $T$ such that $t \rest \gamma \in B_{\gamma,0}$ if and only if $b(\tilde \gamma)=0$ where $\omega \cdot \tilde \gamma = \gamma$.

Finally, we choose some $t \in \bar p$ witnessing the statement for $\delta$ and we let $p = \bar p \rest t$.\end{proof}


We proceed with the proof of Theorem~\ref{aleph1-embedding}.

\begin{proof}
We show that there is a complete embedding $\iota : D \to \ro(\Laver_\kappa)$ where $D$ is a dense subset of $\Add(\omega_1,1)$. Specifically, let $D$ be the dense subset of $\Add(\omega_1,1)$ consisting of conditions $b$ such that $\dom(b) = \{\alpha < \gamma : \lim(\alpha)\}$ for some $\gamma<\omega_1$.
Given $b \in D$, define
\vspace{-3mm}
\begin{equation*}
\iota(b) := \{p \in \Laver_\kappa : \alpha \in \dom(b) \wedge b(\alpha) = i \Rightarrow p \Vdash ``\dot f(\omega \cdot \alpha) = i \text{''}\}.
\end{equation*}

Observe that $\iota(b) \ne \emptyset$ by Lemma~\ref{extensions}.

The basic properties of complete embeddings hold for $\iota$. We see that $\iota(b)$ is a regular open set, meaning that if $p \notin \iota(b)$, then there is some $q \le p$ such that $\{r \in \Laver_\kappa : r \le q \} \cap \iota(b) = \emptyset$. Moreover, $b' \le b$ implies that $\iota(b') \subseteq \iota(b)$ and $b' \perp b$ implies that $\iota(b') \cap \iota(b) = \emptyset$ by definition.

To prove completeness, suppose that $u \in \ro(\Laver_\kappa)$ and let $p \in u$. Let $b = b(p)$. We claim that $b$ witnesses completeness. If $b' \le b$, then let $p' \le p$ witness Lemma~\ref{extensions}. Then $\iota(b') \cap u \ne \emptyset$ because this intersection contains $p'$. \end{proof}

The fact that this construction cannot be generalized from Corollary~\ref{closed-non-minimality} leads us to our next section.

\section{The Balcar-Simon Technique and Failures of Countable Distributivity}
\label{balcar-simon-sec}

Here are some examples of failures of countable distributivity in the non-closed versions of forcings of perfect tree forcings. We demonstrate the ways in which an idea of Balcar and Simon can be used to obtain failures of countable distributivity  by finding a way to degrade a structure through countably many steps (see \cite{BalcarSimon88}, \cite{BalcarSimonHandbook}), mainly by picking out a non-stationary set and discarding limit points at each stage. All of these involve an ordering, and we cover $\prec^\textup{vert}$, $\prec^\textup{horiz}$, and $\prec^\textup{stat}$. For forcing with the pre-perfect versions we refer the reader to Subsection \ref{SubS4.4}.

\subsection{Slow Fusion}\label{Subs4.1new}

Each of our results in the secution will require a lemma stating that for each condition $p$ in the forcing we are interested in, there is some $q \prec p$ where $\prec$ is one of the orders mentioned above. These conditions $q$ will be provided by fusion sequences in which conditions are strengthened locally and only slightly. We call these ``slow fusion'' sequences. These will in the end be used to prove that the usual sequences in the non-closed versions are not viable because they break down at intermediate steps.

We describe the basic idea of slow fusion here. It is simpler in the Laver case, which by definition satisfies Kanamori's vertical requirement that limits of splitting nodes are splitting nodes. In this case, a $\le$-decreasing sequence $\langle p_\alpha \such \alpha < \lambda \rangle$ is slow if $p_\alpha$ differs from $\bigcap \{p_\beta \such \beta < \alpha\}$ only in the level $p_\alpha(\dom(\Stem(p_0)) + \alpha+1)$, and for any $\beta < \alpha$ and $t \in p_\alpha(\dom(\Stem(p_0)) + \alpha+1)$, we have $p_\alpha \rest t = p_\beta \rest t$. In other words, only one level is altered at a time.

The Miller case is analogous, but we must keep track of the antichains $\Split_\alpha(p)$ for $\alpha<\lambda$. In this case, in order to slowly descend from $\bigcap \{p_\beta \such \beta < \alpha\}$ to $p_\alpha$, 
a ``horizontal stripe'' of $\bigcap\{p_\beta \such \beta < \alpha\}$ that starts after $\Split_\alpha(\bigcap\{p_\beta \such \beta <\alpha\}$ and is bounded in the tree ordering is thinned out in the transition to $p_\alpha$, whereas
the other areas of $\bigcap\{p_\beta \such \beta <\alpha\}$ are just copied into $p_\alpha$. By the disjointness of the stripes shrinkage to a non-condition in an intersection over infinitely many previous steps shall be precluded.

The existence of the intermediate limit $\bigcap\{p_\beta \such \beta<\alpha\}$ is proved rigorously in Lemma~\ref{slow_fusion_lemma} below. The final intersection, meaning the eventual limit of the fusion sequence at stage $\lambda$, is justified in the same manner.

\begin{definition}\label{slow_fusion_stuff}
 We fix a sequence $\langle\kappa_\alpha \such \alpha < \lambda\rangle$ of regular cardinals converging to $\kappa$. Let $\bP$ be any of our forcings where $\lambda = \cf(\kappa)$.
 
 For $\alpha \leq \gamma$ we write $q \le_{\gamma,\alpha} p$ if the following hold:
 
 \begin{enumerate}
\item $q \leq p$,

\item $\Split_\alpha(p) = \Split_\alpha(q)$,

\item $\forall t \in \Split_\alpha(p), \Succ_{p}(t) = \Succ_{q}(t)$,

\item $\forall s \in \Split_{\gamma}(q), \forall t \in \Succ_{q}(s), q \nrest t = p \nrest t$.
\end{enumerate}

We say that a sequence $\langle p_\alpha \such \alpha < \delta \rangle$ is called a \emph{slow fusion sequence} (of length $\delta$) if $\delta \leq \lambda$ and if for $\alpha <\gamma < \delta$, we have $p_\gamma \le_{\gamma,\alpha} p_\alpha$.\end{definition}

Note that this definition depends on the choice of $\langle\kappa_\alpha \such \alpha < \lambda\rangle$.

\begin{lemma}\label{slow_fusion_lemma}
For all the forcings in Definitions \ref{the_forcing} and \ref{more_forcings} and for any  slow fusion sequence $\langle p_\alpha \such \alpha < \delta\rangle$ with $1 \leq \delta \leq \lambda$, we have that $q_\delta=\bigcap\{p_\alpha \such \alpha < \delta\} \in \bP$ and that the following equation from the standard fusion lemma holds:

\vspace{-4mm}

\begin{equation}\label{intermediate}
\begin{split} 
q_\delta = &
\{t \in p_0 \such \exists \alpha<\delta, \exists s \in \Split_{\alpha}(p_{\alpha}), t \sqsubseteq s\} \cup \\
&\bigcup \{\bigcap_{\alpha < \delta} p_\alpha \nrest t \such
t \in  \bigcap\{\Split_{\geq \delta}(p_\alpha) \such \alpha < \delta\})\}.
\end{split}\end{equation}

\noindent where $\Split_{\geq \delta}(p_\alpha)$ has the natural interpretation. If $\delta = \lambda$, then the set in the second line is empty. If $\delta < \lambda$, the right-hand side of the first line is redundant since it is a subset of the set in the second line.

Moreover, $q_\delta \leq_{\delta,\alpha} p_\alpha$ for $\alpha<\delta$, and for $\delta < \lambda$, $\langle p_\alpha \such \alpha < \delta \rangle \concat \langle q_\delta \rangle$ is a slow fusion sequence.\end{lemma}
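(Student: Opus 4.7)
The plan is to prove this by induction on $\delta$. For $\delta = \gamma + 1$ a successor, $q_\delta = p_\gamma$ since the sequence is $\le$-descending, so there is nothing to check. All the real content lies in the limit case $\delta \le \lambda$. For this case, the strategy is first to verify equation \eqref{intermediate} as an explicit description of the intersection $q_\delta$, and then use this description to check that $q_\delta \in \bP$ and that $q_\delta \le_{\delta,\alpha} p_\alpha$ for every $\alpha < \delta$. The final ``moreover'' clause will then be immediate from the latter.

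For equation \eqref{intermediate}, I will verify both inclusions. For $\supseteq$: a node $t \sqsubseteq s$ with $s \in \Split_\alpha(p_\alpha)$ lies in every $p_\beta$ with $\beta \ge \alpha$ by iterating conditions (2) and (3) of $p_\beta \le_{\beta,\alpha} p_\alpha$ (which pin down $\Split_\alpha$ and the immediate successors of its nodes), and in every $p_\beta$ with $\beta < \alpha$ by the $\le$-descending property; nodes belonging to the second union are in $q_\delta$ by construction. For $\subseteq$: given $t \in q_\delta$, I split into two cases according to whether some $t' \sqsubseteq t$ lies in $\Split_\alpha(p_\alpha)$ for some $\alpha < \delta$. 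If so, $t$ belongs to the first set, after observing via condition (4) applied iteratively that the subtree below $t'$ is frozen in place. If not, then $t$ is strictly above every $\Split_\beta(p_\alpha)$ for $\alpha, \beta < \delta$, and hence $t \in \Split_{\ge\delta}(p_\alpha)$ for every $\alpha < \delta$, placing it in the second set.

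Given the explicit description, I will verify that $q_\delta$ is a condition. Non-emptiness follows from the preservation of $\Split_0(p_0)$ across all stages. The $(<\!\lambda)$-closure (where relevant) is inherited: any $\eta$-branch ($\eta < \lambda$) of $q_\delta$ is an $\eta$-branch in each $p_\alpha$, hence has a limit in each, which therefore lies in $q_\delta$. For ever-branching (or the appropriate Laver condition), any $t \in q_\delta$ has a $\kappa_\alpha$-splitting extension in some $\Split_\alpha(p_\alpha) \subseteq q_\delta$ for each sufficiently large $\alpha$. The verification of $q_\delta \le_{\delta,\alpha} p_\alpha$ is then a direct check of the four clauses of Definition~\ref{slow_fusion_stuff} from the explicit structure.

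The hardest part will be verifying the more demanding splitting criteria (club, stationary, cardinal) and the vertical splitting-closure of ${}^{\textup{scl}}\Miller$ when applicable. For the horizontal splitting manner, each $\Split_\alpha(p_\alpha)$ node is inherited into $q_\delta$ with its immediate successors intact via condition (3), so any splitting criterion on those successor sets persists automatically. For the vertical closure in ${}^{\textup{scl}}\Miller$, limits of $\sqsubseteq$-increasing sequences of splitting nodes in $q_\delta$ must themselves split with the required degree in $q_\delta$; the argument will follow the pattern of Observation~\ref{closure_of_the_forcing}, using the disjointness of the stripes modified at successive stages together with $(<\!\kappa_\delta)$-completeness of the relevant filter in order to intersect the required immediate-successor sets across the various $p_\alpha$'s.
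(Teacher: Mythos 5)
Your overall architecture (induction on $\delta$, verify Equation~\eqref{intermediate} by double inclusion, then read off conditionhood and $q_\delta\le_{\delta,\alpha}p_\alpha$) matches the paper's, and the pieces you handle explicitly -- the successor case, non-emptiness, $(<\!\lambda)$-closure of $q_\delta$ as a tree, and the persistence of $\Split_\alpha(p_\alpha)$ with its successor sets via clauses (2) and (3) -- are fine. But there is a gap at the one point where the lemma has real content. For limit $\delta<\lambda$ you verify ever-branching only by exhibiting $\kappa_\alpha$-splitting nodes in $\Split_\alpha(p_\alpha)\subseteq q_\delta$ for $\alpha<\delta$; this says nothing about splitting of degree $\kappa_\eps$ for $\eps\in[\delta,\lambda)$, which $q_\delta$ must also have to be a perfect tree. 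Producing those nodes is exactly where ordinary fusion breaks down and where the paper's proof does its work: above a given node one chooses an increasing chain $s_\iota\in\Split_{\delta_\iota}(p_{\delta_\iota})$, observes that its union $r$ lies in $q_\delta$ by $(<\!\lambda)$-closure of the individual trees, and then uses clause (4) of Definition~\ref{slow_fusion_stuff} to conclude that $p_{\delta_\iota}\rest r$ is the \emph{same} tree for every $\iota$ (indeed equal to $p_0\rest r$), so the minimal splitting node $g\sqsupseteq r$ of level $\geq\delta$, together with its full successor set and the entire perfect cone above it, is common to all stages and survives intact into $q_\delta$. Without this step you have not shown that $q_\delta$ contains any splitting node of degree $\geq\kappa_\delta$, hence not that it is a condition.

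Relatedly, you relegate the limit-level splitting issue to the club/stationary/splitting-closed variants and propose to resolve it there ``following the pattern of Observation~\ref{closure_of_the_forcing},'' intersecting successor sets across the $p_\alpha$'s via $(<\!\kappa_\delta)$-completeness of a filter. That mechanism is unavailable for the plain cardinal-splitting $\Miller_\kappa$ and $\Laver_\kappa$ that the lemma is actually stated for (there is no filter, and an intersection of $\delta$ many size-$\kappa_{\delta}$ subsets of $\kappa_{\delta}$ can well be empty), and it is also not what is needed: the point of slowness is precisely that clause (4) makes the relevant successor sets \emph{equal} across all stages, so no completeness argument enters at all. Two smaller slips: your case split for the $\subseteq$ direction is inverted (a node lying \emph{above} some $t'\in\Split_\alpha(p_\alpha)$ is not thereby in the first set, which consists of nodes lying \emph{below} splitting nodes; the correct dichotomy is ``below some $\Split_\alpha(p_\alpha)$-node'' versus ``above nodes of every level $<\delta$,'' the latter placing the node in the cone of some $g\in\Split_{\geq\delta}$), and your claim that every $t\in q_\delta$ lies below a node of $\Split_\alpha(p_\alpha)$ for all sufficiently large $\alpha$ needs the observation that only boundedly many levels can have splitting nodes below a fixed $t$, together with maximality of $\Split_\alpha(p_\alpha)$ as an antichain in $p_\alpha$.
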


\begin{proof} We show that the fusion does not break down at intermediate steps or at step $\lambda$ by induction. Specifically, we prove that $q_\delta := \bigcap_{\alpha<\delta}p_\alpha \in \bP$ and that Equation~\eqref{intermediate} holds by induction on $\delta \leq \lambda$. We give the argument for the Miller versions. The argument for the Laver versions is simpler because the antichains $\Split_\alpha(p_\gamma)$ are levels.

We must show that $q_\delta$ as defined is a condition. In the case of $\delta = \eps +1$ we have $q_\delta = p_\eps$ is a condition and the verification of Equation~\eqref{intermediate} is easy. Now suppose that $\delta<\lambda$ is a limit. Since the empty node is 
in any $p_\alpha$, we have $\emptyset \in q_\delta$. The main task is then to show that for each node $s \in q_\delta$, for any $\eps \in [\delta, \lambda)$, there is a $\kappa_\eps$-splitting 
node in $q_\delta$ above $t$.

To find this splitting, first let $\delta$ be the limit of a strictly increasing sequence $\delta_\iota$, $\iota < \cf(\delta)$. By the induction hypothesis and an inspection of Equation~\eqref{intermediate}, there is a strictly increasing sequence $\langle s_\iota \such \iota < \cf(\delta) \rangle$ such that the following hold:

\begin{enumerate}
\item $s_\iota \in \Split_{\delta_\iota}(p_{\delta_\iota})$;
\item for any  $s \in \Split_{\delta_\iota}(p_{\delta_\iota})$ and any $t \in \Succ_{p_{\delta_\iota}}(s)$, $ p_{\delta_\iota} \rest t = p_0 \rest t$;
\item for any $\eps \in [\delta_\iota,\delta)$ we have
\begin{enumerate}[(a)]
\item $\Split_{\delta_\iota}(p_{\delta_\iota} ) = \Split_{\delta_\iota}(p_\eps)$,
\item  for any $s \in \Split_{\delta_\iota}(p_{\delta_\iota})$, $\Succ_{p_{\delta_\iota }}(s) = \Succ_{p_\eps}(s)$.

\end{enumerate}\end{enumerate}

By $(<\!\lambda)$-closure of the conditions (not of $\bP$ itself) we have $r:= \bigcup\{s_\iota \such \iota < \cf(\delta)\} \in q_\delta$. We take $g_\iota $ to be the $\sqsubseteq$-minimal node in $\Split_{\geq \delta}(q_{\delta_\iota})$ that is $\sqsupseteq r$. For any $\iota < \cf(\delta)$, the fourth condition in Definition~\ref{slow_fusion_stuff} of $p_{\delta_\iota} \le_{\delta_\iota,\delta_0} p_{\delta_0}$ tells us that we have  $p_{\delta_\iota} \restriction r = p_{\delta_0} \restriction r$. Hence $g_\iota$ does not depend on $\iota$ and its set of immediate successors also does not depend on $\iota$, so we write $g = g_\iota$ for any $\iota<\cf(\delta)$. Then $g \in \Split_{\geq \delta}(q_\delta)$, $ \Succ_{q_\delta}(g)= \Succ_{p_{\delta_\iota}}(g)$, and for any $t \in \Succ_{q_\delta}(g)$, 
$q_\delta \rest t = p_{\delta_\iota} \rest t$. By $\sqsubseteq$-minimality, we have $g \in \Split_\delta(q_\delta)$. 

By construction we have $\Split_\alpha(p_\alpha) = \Split_\alpha(q_\delta)$ for $\alpha < \delta$, and for $t \in \Split_\alpha(q_\delta)$, $\Succ_{p_{\alpha}}(t) = \Succ_{q_\delta}(t)$. Since for $g \in \Split_\delta(q_\delta)$ $p_0 \rest g = q_\delta \rest g$, the higher part is a perfect tree, as required.
So $q_\delta$ is indeed a condition and Equation~\eqref{intermediate} holds. Then $q_\delta \le_{\delta,\alpha} p_\alpha$ for all $\alpha<\delta$ and the so condition $q_\delta$ can serve as a prolongation of the slow fusion sequence $\langle p_\delta \such \alpha <\delta \rangle$.\end{proof}

The next lemma shows that there are particular prolongations $\seq{p_\alpha}{\alpha<\delta}{}^\frown \langle p_\delta \rangle$ in which $p_\delta$ is stronger than the condition $q_\delta$ from Equation~\eqref{intermediate}.

\begin{lemma}\label{August30}
Let $\langle p_\alpha \such \alpha < \delta \rangle$, $\delta \in \Lim(\kappa)$, be a slow
fusion sequence and let $q_\delta= \bigcap \{p_\alpha \such \alpha < \delta\}$. 
In the Miller case, we can  strengthen $q_\delta$ to a condition $p_\delta$, such that 
for any $t \in \Split_\delta(q_\delta)$, $|\Succ_{p_\delta}(t) | = 1$
and such that 
$\langle p_\delta \such \alpha \leq \delta \rangle$ is a slow fusion sequence.
In the Laver case, we can allow for $t \in \Split_\delta(q_\delta) = \Split_\delta(p_\delta)$ that $\Succ_{p_\delta}(t) $ is a subset of $\Succ_{q_\delta}(t)$.
\end{lemma}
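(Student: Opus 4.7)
The plan is, in the Miller case, to define $p_\delta$ by making a single-successor choice at every node of $\Split_\delta(q_\delta)$ while preserving everything else. Concretely, for each $t \in \Split_\delta(q_\delta)$ I fix some $\xi_t$ with $t \concat \langle \xi_t \rangle \in q_\delta$ and set
\[
p_\delta \;=\; \{ s \in q_\delta \such \exists\, t \in \Split_\delta(q_\delta),\; s \sqsubseteq t \} \;\cup\; \bigcup_{t \in \Split_\delta(q_\delta)} q_\delta \nrest (t \concat \langle \xi_t \rangle).
\]
By Lemma~\ref{slow_fusion_lemma}, $q_\delta$ is a condition, hence each subtree $q_\delta \nrest (t \concat \langle \xi_t \rangle)$ is perfect and $(<\!\lambda)$-closed, so $p_\delta \in \bP$, and by construction $|\Succ_{p_\delta}(t)| = 1$ for every $t \in \Split_\delta(q_\delta)$.

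The remaining task is to verify $p_\delta \leq_{\delta, \alpha} p_\alpha$ for every $\alpha < \delta$ in the sense of Definition~\ref{slow_fusion_stuff}. Clauses (1)--(3) are immediate: $p_\delta$ agrees with $q_\delta$ on the $\sqsubseteq$-initial part of the tree down through $\Split_\delta(q_\delta)$, and $q_\delta \leq_{\delta, \alpha} p_\alpha$ by Lemma~\ref{slow_fusion_lemma}, so the $\Split_\beta$'s with $\beta < \delta$ together with their successor sets are inherited verbatim from the original sequence.

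The main step is clause (4), and it rests on the following observation in the Miller case: the antichain $\Split_\delta(p_\delta)$ sits strictly $\sqsupset$-above $\Split_\delta(q_\delta)$. Indeed, every $t \in \Split_\delta(q_\delta)$ has $|\Succ_{p_\delta}(t)| = 1 < \kappa_\delta$, so it is no longer a $\kappa_\delta$-splitting node of $p_\delta$; the $\sqsubseteq$-minimal $\kappa_\delta$-splitting nodes of $p_\delta$ above the $\Split_{<\delta}$-chain must therefore lie inside the perfect subtrees $q_\delta \nrest (t \concat \langle \xi_t \rangle)$. Consequently, for any $s \in \Split_\delta(p_\delta)$ and $c \in \Succ_{p_\delta}(s)$, the node $c$ lies $\sqsupset$-above some element of $\Split_\delta(q_\delta)$, so $p_\delta \nrest c = q_\delta \nrest c$. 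Formula~\eqref{intermediate} gives $q_\delta \nrest c = \bigcap_{\beta < \delta} p_\beta \nrest c$; and iterating clause~(4) of the relations $p_\gamma \leq_{\gamma, \beta} p_\beta$ from the original slow fusion sequence shows that for such $c$ all of the trees $p_\beta \nrest c$ for $\beta < \delta$ already coincide with $p_\alpha \nrest c$. Hence $p_\delta \nrest c = p_\alpha \nrest c$, as required.

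In the Laver case, $\Split_\delta(p) = p(\dom(\Stem(p)) + \delta)$ is a fixed level, so $\Split_\delta(p_\delta) = \Split_\delta(q_\delta)$ irrespective of how one shrinks at this level. For each $t \in \Split_\delta(q_\delta)$ I take any $\Succ_{p_\delta}(t) \subseteq \Succ_{q_\delta}(t)$ still witnessing the ambient splitting requirement---full cardinality $\kappa_{\dom(t)}$, stationarity, or containing a club, as the case may be---which is routinely possible. The verification then proceeds exactly as before, but is cleaner because the relevant splitting levels do not move. The principal obstacle, present only in the Miller case, is the precise location of $\Split_\delta(p_\delta)$ and the bookkeeping showing that above $\Split_\delta(q_\delta)$ all the conditions $p_\alpha$ with $\alpha < \delta$ already agree.
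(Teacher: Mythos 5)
Your proof is correct and follows essentially the same route as the paper: pick a single successor of each $t \in \Split_\delta(q_\delta)$, keep a full copy of $q_\delta$ above it, and verify clause (4) of $\le_{\delta,\alpha}$ by iterating clause (4) along the original sequence to see that all the $p_\alpha$ already agree above successors of $\Split_\delta(q_\delta)$-nodes. The only (harmless) difference is that the paper additionally ascends to a node $g(t) \in \Split_\beta(q_\delta)$ with $\beta > \delta$ and trims $\Succ_{q_\delta}(g(t))$ to a set exactly meeting the $\kappa_\delta$-splitting criterion, thereby pinning down $\Split_\delta(p_\delta)$ explicitly, whereas you leave the subtree above $t \concat \langle \xi_t \rangle$ untouched and observe that clause (4) holds wherever $\Split_\delta(p_\delta)$ lands; the statement of the lemma does not require the extra normalization.
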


\begin{proof}
For each $t \in \Split_\delta(q_\delta)$ we choose just one 
$f(t) \in \Succ_{q_\delta}(t)$ and some $\beta > \delta$ and some extension $f(t) \sqsubseteq g(t) \in \Split_\beta(q_\delta)$. Then we choose $S(g(t)) \subseteq \Succ_{q_\delta}(g(t))$ that meets the $\kappa_\delta$-splitting criterion and 
let 
\[p_\delta = \bigcup \{q_\delta \rest s \such \exists t \in \Split_\delta(q_\delta), s \in S(g(t))\}.
\]
Then we have for $\alpha < \delta$, $\Split_\alpha(p_\alpha) = \Split_\alpha(p_\delta)$, for any $t \in \Split_\alpha(p_\alpha)$, $\Succ_{p_{\alpha}}(t) = \Succ_{p_\delta}(t)$, 
and for any  $s=g(t) \in \Split_\delta(p_\delta)$
for any $r \in \Succ_{p_\delta}(s) = S(s) $, $p_\delta \rest r = q_\delta \rest r = p_0 \rest r$ and
$\langle p_\alpha \such \alpha \leq \delta\rangle$ is a slow fusion sequence.
In the Laver case we let $\beta = \delta$ and $t = f(t) = g(t)\in \Split_\delta(q_\delta)$ and we choose a subset $\Succ_{p_\delta}(t)$ of $\Succ_{q_\delta}(t)$ that meets the splitting criterion.
\end{proof}

\subsection{Miller Forcing and the Vertical Balcar-Simon Trick}
\label{SubS4.1}

Let $\bP$ be $\Miller_\kappa$, $\Miller_\kappa^\textup{stat}$, $\Miller_\kappa^\textup{club}$. The following does not work either for the Laver case or for the splitting-closed Miller versions.


\begin{definition}[The Vertical Balcar-Simon Trick]\label{vertical}
Let $p \in \bP$. We let $q \prec^\textup{vert} p$ if:

\begin{enumerate}[(i)]
\item $q \leq p$,
\item $\forall t \in \Split_{\alpha+1}(p) \cap q,
\Succ_q(t) = \Succ_p(t)$,
\item for $\alpha =0$ or limit $\alpha < \lambda$, $\forall t \in \Split_\alpha(p),
|\Succ_q(t)|=1$.
\end{enumerate}
\end{definition}

\begin{lemma}\label{existence_0}
For any $p \in \bP_\kappa$ there is some $ q \prec^\textup{vert} p$.
\end{lemma}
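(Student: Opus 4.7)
The plan is to construct $q$ directly by fixing, for each $\alpha<\lambda$ with $\alpha=0$ or $\alpha$ a limit and for each $t\in\Split_\alpha(p)$, a choice $f_\alpha(t)\in\Succ_p(t)$, and then defining
\[
q=\{s\in p \such f_\alpha(t)\sqsubseteq s \text{ whenever } \alpha=0 \text{ or } \alpha \text{ is a limit, } t\in\Split_\alpha(p), \text{ and } t\sqsubset s\}.
\]
In the club and stationary versions no special choice of $f_\alpha$ is needed, since the club- or stationary-splitting demanded of $q$ will be witnessed by the successor-level $\Split_{\alpha+1}(p)\cap q$ nodes, where clause (ii) retains the full $\Succ_p(t)$.

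The verification that $q\in\bP$ breaks into three checks. Downward closure is immediate because $\dom(f_\alpha(t))=\dom(t)+1$ and any $t\sqsubset s'$ also satisfies $t\sqsubset s$ for $s'\sqsubseteq s$. For $<\!\lambda$-closure, a $<\!\lambda$-chain $\langle s_i\rangle$ in $q$ has union $s\in p$ by the perfection of $p$, and any $t\sqsubset s$ lies strictly below some $s_i$, so $f_\alpha(t)\sqsubseteq s_i\sqsubseteq s$. For ever-branching, given $s\in q$ and $\beta<\lambda$, I would extend $s$ upward in $p$ by following $f_\alpha$ at every limit- or zero-level $\Split_\alpha(p)$-node encountered, using $<\!\lambda$-closure at limit stages of the extension; since the consistency condition for any node on this path involves only $f_\alpha$'s at levels that the navigation respects, the path lies in $q$, and continuing we reach a node $r\in\Split_{\beta+1}(p)\cap q$ at which clause (ii) gives $\Succ_q(r)=\Succ_p(r)$, yielding the required $\kappa_{\beta+1}$-splitting (and club- or stationary-splitting if relevant).

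The three clauses of $\prec^\textup{vert}$ then follow. Clause (i) is trivial. For (ii), any $t\in\Split_{\alpha+1}(p)\cap q$ imposes no new consistency constraint at itself (since $\alpha+1$ is neither $0$ nor a limit), so every $s\in\Succ_p(t)$ inherits $t$'s consistency and lies in $q$. For (iii), if $t\in\Split_\alpha(p)\cap q$ with $\alpha=0$ or limit, any $s\in\Succ_q(t)\subseteq\Succ_p(t)$ has $\dom(s)=\dom(t)+1$, and the consistency rule applied at $t$ itself forces $f_\alpha(t)\sqsubseteq s$, whence $s=f_\alpha(t)$; a direct inspection confirms $f_\alpha(t)\in q$, giving $\Succ_q(t)=\{f_\alpha(t)\}$. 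Clause (iii) must be read with the implicit qualifier $t\in\Split_\alpha(p)\cap q$, since distinct elements of $\Split_\alpha(p)$ sitting above a common earlier limit-level split cannot both remain in $q$. A minor bookkeeping point is that the convention that $\Stem(q)$ be $\kappa_0$-splitting is not automatic in the raw construction, but can be restored by passing to $q\rest r_1$ for any $r_1\in\Split_1(p)\cap q$. The most delicate step is the ever-branching verification, where the upward navigation must simultaneously honor all the choice functions $f_\alpha$ at the limit-level splits it passes through.
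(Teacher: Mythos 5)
Your construction is correct in its core for $\Miller_\kappa$ and takes a genuinely different route from the paper. The paper obtains the lemma as an application of its slow fusion machinery (Lemmas~\ref{slow_fusion_lemma} and~\ref{August30}): it builds a decreasing sequence $\langle p_\alpha \such \alpha<\lambda\rangle$ that does nothing at successor stages and, at each limit stage $\alpha$, intersects and then prunes each $t \in \Split_\alpha(q_\alpha)$ to a single successor, with the slow fusion lemma guaranteeing that every intermediate intersection and the final one are conditions. You instead define $q$ in one step from a selector $f_\alpha(t) \in \Succ_p(t)$ chosen at every zero- or limit-level splitting node, and verify conditionhood directly. That verification does go through: the essential facts are that the antichains $\Split_\alpha(p)$ are pairwise disjoint maximal antichains and that between consecutive members of a chain $\langle t_\eta \such \eta \le \gamma\rangle$ with $t_\eta \in \Split_\eta(p)$ there is no member of any $\Split_\delta(p)$, so the upward navigation from $s \in q$ meets at most one selector constraint per limit level and lands in $\Split_{\gamma+1}(p)\cap q$, whose members keep their full $p$-successor sets by disjointness. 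Your approach buys a shorter, self-contained proof; the paper's buys reusable machinery (slow fusion is used again for Lemmas~\ref{existence} and~\ref{existence_2}). Your reading of clause (iii) with the qualifier $t \in \Split_\alpha(p)\cap q$ agrees with what the paper's own proof establishes.

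The one genuinely thin spot is the club and stationary variants. You assert that the splitting required of $q$ is witnessed by the nodes of $\Split_{\alpha+1}(p)\cap q$, but $\Split_{\alpha+1}(p)$ is defined by the cardinality criterion $|\Succ_p(t)| \ge \kappa_{\alpha+1}$, and a minimal $\kappa_{\alpha+1}$-splitting node of a condition in $\Miller_\kappa^\textup{club}$ need not split into a superset of a club. Since your $q$ prunes every zero- or limit-level splitting node to a single successor, cofinally often along every branch, nothing in your argument excludes a condition $p$ whose club-splitting nodes all lie in $\bigcup\{\Split_\delta(p) \such \delta = 0 \text{ or } \delta \text{ limit}\}$, in which case your $q$ would have no club-splitting nodes at all and would fail to be a condition. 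The repair is to run the same construction with the splitting antichains defined via the club (resp.\ stationary) criterion for those variants, so that the surviving successor-level splitting nodes really do witness the required manner of splitting; as written, this step is a gap.
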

\begin{proof}
By induction on $\alpha$ we choose a slow fusion sequence $\langle p_\alpha \such \alpha < \lambda \rangle$ using Lemma~\ref{August30}. We let $p_0 = p$. If $\alpha= \beta+1$ is a successor, we let $p_\alpha = p_\beta$. If $\alpha $ is a limit we first let $q_\alpha = \bigcap \{p_\beta \such \beta < \alpha\}$. By Lemma \ref{slow_fusion_lemma}, $q_\alpha \in \bP_\kappa$. Now for any $t \in \Split_\alpha(q_\alpha)$ we pick some $f(q_\alpha, t) \in \Succ_{q_\alpha}(t)$ and some $g(q_\alpha,t) \sqsupseteq f(q_\alpha,t)$ such that $g(q_\alpha, t) \in \Split_{\alpha+1}(q_\alpha)$.
 We let
\begin{equation*} p_\alpha = \bigcup \{q_\alpha \! \rest \! g(q_\alpha,t) \such  
t \in \Split_\alpha(q_\alpha)\}.
\end{equation*}
Then $\Split_\alpha(p_\alpha) = \Split_{\alpha+1}(q_\alpha)$ and 
$\langle p_\beta \such \beta \leq \alpha \rangle$ is an initial segment of a slow fusion sequence.
By Lemma \ref{slow_fusion_lemma} $q = \bigcap\{p_\alpha \such \alpha < \lambda\}$ is a condition and for any limit $\alpha$ and any $t \in \Split_\alpha(p) \cap q$, $|\Succ_q(t)| = 1$ by construction. Thus we have $ q \prec^{\rm vert} p$.\end{proof}

\begin{theorem}\label{March_31_fruit} $\Miller_\kappa$, $\Miller_\kappa^\textup{club}$,
$\Miller_\kappa^\textup{stat}$
are not $\omega$-distributive. 
\end{theorem}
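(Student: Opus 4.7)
The plan is to exhibit, for each of the three forcings $\Miller_\kappa$, $\Miller_\kappa^\textup{club}$, and $\Miller_\kappa^\textup{stat}$, a $\bP$-name $\dot{f}\colon\omega\to\On$ that is forced (below a chosen condition) not to lie in $V$, which establishes the failure of $\omega$-distributivity.

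Fixing a condition $p\in\bP$, I first would verify by transfinite induction on $\alpha<\lambda$---using ever-branching together with the $(<\!\lambda)$-closure of perfect trees---that each $\Split_\alpha(p)$ is a maximal antichain in $p$, so that exactly one element $t_n\in\Split_{\omega\cdot n}(p)$ is $\sqsubseteq$-comparable with the generic branch $\dot{g}$. I then set $\dot{f}(n):=\dot{g}(\dom(t_n))$, which is a well-defined $\bP$-name below $p$ for a function $\omega\to\kappa$.

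To show $p\Vdash\dot{f}\notin V$, I plan to establish that for every $h\in{}^{\omega}\On$ in $V$ and every $q\leq p$ there is a further refinement $q'\leq q$ with $q'\Vdash\dot{f}\neq\check{h}$. Given such $q$ and $h$, I would run the slow-fusion construction from the proof of Lemma~\ref{existence_0}, adding the constraint that at each limit stage $\alpha=\omega\cdot n$, the successor $f(q_\alpha,t)=t\concat\langle\gamma_t\rangle$ chosen for each $t\in\Split_\alpha(q_\alpha)$ must satisfy $\gamma_t\neq h(n)$. Since $|\OSucc_{q_\alpha}(t)|\geq\kappa_\alpha\geq 2$, this constraint leaves many valid choices, and imposing it respects the splitting criterion defining $\bP$---whether unbounded, club, or stationary---because we discard only a single ordinal from a large (unbounded, club, or stationary) set. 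The resulting $q'\prec^\textup{vert}q$ then forces that the generic's value just above every surviving node of $\Split_{\omega\cdot n}(p)\cap q'$ differs from $h(n)$, whence $q'\Vdash\dot{f}(n)\neq h(n)$ for every $n$.

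The main obstacle I anticipate is to reconcile the indexing $\Split_{\omega\cdot n}(p)$ appearing in the definition of $\dot{f}$ with the indexing $\Split_\alpha(q_\alpha)$ used during the fusion; concretely, one must check that the unique node $t_n$ picked out by the generic at the $(\omega\cdot n)$-th splitting level of $p$ really lies among those on which the constraint was imposed. Lemma~\ref{slow_fusion_lemma} preserves lower splitting levels inductively, so this should follow from a careful bookkeeping argument tracking how the $\Split$-indexing evolves through the slow fusion. A secondary point, that $\dot{f}$ is a partial name defined only below $p$, is harmless since exhibiting a single $p$ forcing $\dot{f}\notin V$ already refutes $\omega$-distributivity, and the same argument applies uniformly to all three variants because the single-point avoidance $\gamma_t\neq h(n)$ is innocuous for each of the relevant splitting requirements.
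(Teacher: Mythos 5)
There is a genuine gap, and it is precisely at the point you flag as ``bookkeeping'': the mismatch between $\Split_{\omega\cdot n}(p)$ (which defines $\dot f$) and $\Split_\alpha(q_\alpha)$ (which your fusion controls) is not a technicality but is fatal to the name you chose. Concretely, your $\dot f$ is \emph{not} forced by $p$ to be new. Take $p$ to be the full tree $N(\vec\kappa)$, so that $\Split_{\omega\cdot n}(p)=p(\omega\cdot n)$ and $\OSucc_p(t)=\kappa_{\omega\cdot n}$ for $t\in p(\omega\cdot n)$. Let $q=\{t\in p\such \forall n<\omega\,(\omega\cdot n\in\dom(t)\rightarrow t(\omega\cdot n)=0)\}$. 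This $q$ is still perfect (it splits fully at every level other than the levels $\omega\cdot n$, so it is ever-branching and $(<\!\lambda)$-closed, and it satisfies the club and stationary splitting requirements as well), and $q\Vdash\dot g(\omega\cdot n)=0$ for all $n$, hence $q\Vdash\dot f=\check h$ for $h\equiv 0\in V$. So the density claim ``for every $h$ and every $q\le p$ there is $q'\le q$ forcing $\dot f\ne\check h$'' is false. The underlying reason is that a stronger condition $q$ may be non-splitting at exactly the nodes of $\Split_{\omega\cdot n}(p)$ while splitting generously elsewhere; your fusion then thins successor sets at nodes of $\Split_{\omega\cdot n}(q_\alpha)$, which lie far above the nodes $t_n\in\Split_{\omega\cdot n}(p)$ whose successors determine $\dot f$, and by that point the values $\dot g(\dom(t_n))$ can already be frozen to $h(n)$.

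The paper's proof avoids constructing a name altogether. It builds a Balcar--Simon matrix $\seq{\cA_n}{n<\omega}$ of maximal antichains such that every $q\in\cA_{n+1}$ below $p\in\cA_n$ satisfies $q\prec^\textup{vert} p$ (Definition~\ref{vertical}, with existence supplied by the slow fusion of Lemma~\ref{existence_0}), and then observes that no single condition can lie below one member of each $\cA_n$, since such a condition would have no splitting nodes. This witnesses the failure of $\omega$-distributivity directly via the matrix characterization, without ever having to track where the generic meets a fixed antichain of a fixed condition---which is exactly the step your approach cannot carry out. If you want to keep the ``new function'' formulation, the natural repair is to derive the new $\omega$-sequence from such a matrix (as in Fact~\ref{relevant-theory}), not from the generic branch read at splitting levels of a single condition.
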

\begin{proof} Let $\cA_0$ be any maximal antichain in $\bP_\kappa$. Given $\cA_n$, choose for any $p \in \cA_n$, a maximal antichain of conditions $q \prec^\textup{vert} p$ and let $\cA_{n+1}$ be the union of these $q$'s. Then there is no $q \in \bP$ such that for any $n$ there is $p_n \in \cA_n$ with  $q \leq p_n$ because such a $q$ would have no splitting nodes.\end{proof}

\subsection{Laver Forcing and the Horizontal Balcar--Simon Trick}
\label{SubS4.2}

Let $\bP$ be $\Miller_\kappa$, ${}^\textup{scl}\Miller_\kappa$ or $\Laver_\kappa$. Here the $(<\!\cf(\kappa))$-closed versions are of course excluded. The version with stationary ordinal successor sets is also excluded and will be handled in the next subsection.


\begin{theorem}\label{5.16_balcar_simon} If $\bP$ is one of $\Miller_\kappa$, ${}^\textup{scl}\Miller_\kappa$, or $\Laver_\kappa$, then $\bP$ is not countably distributive.
\end{theorem}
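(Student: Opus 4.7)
The plan is to adapt the argument of Theorem~\ref{March_31_fruit} by replacing the vertical ordering $\prec^\textup{vert}$ with a horizontal analogue $\prec^\textup{horiz}$ that thins the ordinal successor sets at splitting nodes rather than flattening them at limits. For $\Laver_\kappa$, I would define $q \prec^\textup{horiz} p$ to mean $q \leq p$, $\Stem(q) = \Stem(p)$, and at every splitting node $t$ of $q$, $\OSucc_q(t)$ is a prescribed proper subset of $\OSucc_p(t)$ of full size $\kappa_{\dom(t)}$. In $\Miller_\kappa$ and ${}^\textup{scl}\Miller_\kappa$ the analogue thins $\Succ_p(t)$ at each $t \in \Split_\alpha(p)$ to a prescribed proper subset still witnessing $\kappa_\alpha$-splitting. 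Because $\prec^\textup{horiz}$ alters only the cardinalities of successor sets, not which nodes are splitting, the $(<\!\lambda)$-closure of splitting nodes in ${}^\textup{scl}\Miller_\kappa$ is preserved automatically.

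The first technical step is an existence lemma: for every $p \in \bP$ there is some $q \prec^\textup{horiz} p$. I would prove this by a slow fusion construction in the spirit of Lemma~\ref{existence_0}. Starting from $p_0 = p$, construct a slow fusion sequence $\langle p_\alpha \such \alpha < \lambda \rangle$, performing the prescribed thinning at each node of $\Split_\alpha(p_\alpha)$ in step $\alpha + 1$, and taking intermediate limits via Lemmas~\ref{slow_fusion_lemma} and~\ref{August30} at limit stages. Because slow fusion alters only one horizontal stripe per step and leaves the remainder of the tree intact (cf.~Definition~\ref{slow_fusion_stuff}(4)), the limit $q = \bigcap_\alpha p_\alpha$ remains in $\bP$.

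With existence established, the failure of countable distributivity follows the template of Theorem~\ref{March_31_fruit}: fix $p_0 \in \bP$, let $\cA_0 = \{p_0\}$, and inductively let $\cA_{n+1}$ be the union over $p \in \cA_n$ of maximal antichains of $\prec^\textup{horiz}$-refinements below $p$. To preclude a common refinement, prescribe in advance, at each splitting node $t$ of $p_0$, a nested partition system in which $\OSucc_{p_0}(t) = \bigsqcup_{\xi_0} X^{t,0}_{\xi_0}$, each $X^{t,n}_{\xi_0,\ldots,\xi_{n-1}}$ is further partitioned into pieces $X^{t,n+1}_{\xi_0,\ldots,\xi_n}$ of full size, and the nesting is arranged via a fixed injection $\kappa_{\dom(t)} \hookrightarrow {}^\omega\kappa_{\dom(t)}$ (partitioning by successive coordinates) so that along every $\omega$-branch $(\xi_0,\xi_1,\ldots)$ the intersection $\bigcap_n X^{t,n}_{\xi_0,\ldots,\xi_n}$ has at most one element. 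A putative common refinement $r$ would satisfy $\Succ_r(\Stem(r)) \subseteq \bigcap_n \Succ_{p_n}(\Stem(r))$, which by the partition scheme contains at most one element; this contradicts the requirement that $\Stem(r)$ be a splitting node of $r$ witnessing full Laver size (respectively the ever-branching condition in the Miller cases).

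The principal obstacle is coordinating the partition scheme at every splitting node of $p_0$ with the slow fusion producing the $\prec^\textup{horiz}$-refinements, uniformly for all three forcings. Because $\prec^\textup{horiz}$ preserves the identity of splitting nodes, the $(<\!\lambda)$-closure condition of ${}^\textup{scl}\Miller_\kappa$ carries through the construction automatically; this is what permits a single argument to cover $\Miller_\kappa$, ${}^\textup{scl}\Miller_\kappa$, and $\Laver_\kappa$ simultaneously.
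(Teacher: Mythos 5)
Your overall architecture (a horizontal thinning relation, slow fusion to establish its non-emptiness, and a Balcar--Simon matrix of $\omega$ many refining maximal antichains) is the same as the paper's, but the specific mechanism you use to preclude a common refinement has a genuine gap at the maximality step. You require that for $q \prec^\textup{horiz} p$ the set $\OSucc_q(t)$ be \emph{equal to} a prescribed piece $X^{t,n+1}_{\xi_0,\dots,\xi_n}$ of a nested partition fixed in advance at the nodes of $p_0$; this rigidity is what makes $\bigcap_n \OSucc_{p_n}(t)$ a singleton, but it also destroys density. For the matrix $\seq{\cA_n}{n<\omega}$ to witness a failure of distributivity, each $\cA_{n+1}$ must be predense below $p_0$, which requires the set of $\prec^\textup{horiz}$-refinements of $p\in\cA_n$ to be dense (not merely non-empty) below $p$. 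It is not: take $p' \le p$ whose successor set at some node $t$ is a transversal $Y$ of the partition $\{X^{t,n+1}_{\vec\xi,\xi_n} \such \xi_n\}$, i.e.\ $|Y \cap X^{t,n+1}_{\vec\xi,\xi_n}| = 1$ for each $\xi_n$. Then $Y$ has full size $\kappa_{\dom(t)}$, so $p'$ is a legitimate condition, but no $q \le p'$ can have $\OSucc_q(t)$ equal to (or even a full-size subset of) any prescribed piece, and in the Laver case $p'$ is outright incompatible with every prescribed-piece refinement of $p$. So your $\cA_1$ already fails to be predense below $p_0$, and the matrix proves nothing. A "maximal antichain of $\prec^\textup{horiz}$-refinements" is maximal only within that (non-dense) set.

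The paper's proof of Theorem~\ref{5.16_balcar_simon} is designed precisely around this obstacle: instead of prescribing which subsets the successor sets must become, it attaches ordinal labels $f_{p,s}\colon \OSucc(s)\to\kappa_\alpha$ to the conditions (Definition~\ref{labeled_tree}) and defines $(q,f_q)\prec^\textup{horiz}(p,f_p)$ by strict pointwise decrease of the labels (Definition~\ref{def:leqstr}). Because a labeling restricts to an arbitrary subcondition $q\le p$ as $f_p\rest q$, the existence lemma (Lemma~\ref{existence}) applies below \emph{every} $q \le p$, which is exactly what makes the projected antichains maximal in $\bP$ (Lemma~\ref{supplement}). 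The contradiction with a common refinement $r$ then comes not from shrinking $\Succ(t)$ to a point --- indeed $\bigcap_n\Succ_{q_n}(t)$ may well remain of full size --- but from the well-foundedness of the ordinals: the labels $f_{q_n,s}(\xi)$ would form an infinite strictly decreasing sequence. If you want to salvage your write-up, replace the prescribed-partition scheme by such a labeling (your fixed injection $\kappa_{\dom(t)}\hookrightarrow{}^\omega\kappa_{\dom(t)}$ is in the right spirit, but the data must travel with the condition and restrict to subconditions rather than being pinned to $p_0$). Separately, in the Miller cases you will also need to restrict to the dense set of weakly splitting normal conditions, as the paper does, so that the nodes at which the thinning is tracked are stable under refinement.
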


Rather than changing splitting nodes to non-splitting nodes, we thin out successor sets. In this way, we prove Theorem \ref{5.16_balcar_simon}, adapting Balcar and Simon's descending technique of their Theorem~\cite[Theorem 5.16]{BalcarSimon88}.

\begin{definition}\label{labeled_tree}
A pair $(p,f_p)$ is called a \emph{labeled tree} if it has the following properties:
\begin{enumerate}
\item $p \in \bP$.
\item $f_p = \langle f_{p,t} \such t\in \Split(p) \rangle$.
\item
For for each $\alpha < \lambda$ for each $s \in \Split_\alpha(p)$ the function
 \[f_{p,s} \colon \OSucc(s) \to \kappa_\alpha
 \]
 is one-to-one and strictly increasing.
\end{enumerate}
We let $L$ be the set of all labeled trees.
\end{definition}

\begin{definition}\label{def:leqstr}
Let $(p,f_p)$ and $(q,f_q)$ be two labeled trees such that $q\leq p$. We write $f_q < f_p$ if 
\[
\forall s \in \Split(q),
\forall  \xi \in \dom(f_{q,s}), f_{q,s}(\xi) <f_{p,s}(\xi).
 \]
 We write $(q,f_q) \prec^\textup{horiz} (p,f_p)$  if $q \leq p$ and
 $f_q < f_p$.
 \end{definition}

\begin{lemma} \label{existence}
Let $(p,f_p)$ be a labeled tree. Then there is some
$(q,f_q)$ such that  $q\leq_0 p$ and $(q,f_q) \prec^\textup{horiz} (p,f_p)$.
\end{lemma}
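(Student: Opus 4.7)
The plan is to construct $q$ from $p$ by a uniform shift-by-one applied at every splitting node: I drop the zero- and limit-indexed ordinal successors and relabel each retained successor by the label of the element that preceded it in the enumeration. Concretely, for every $t \in \Split(p)$ let $\theta_t = \otp(\OSucc_p(t))$ and enumerate $\OSucc_p(t)$ in strictly increasing order as $\langle \xi^t_\gamma \such \gamma < \theta_t \rangle$; set
\[
\OSucc_q(t) := \{\xi^t_{\beta+1} \such \beta+1 < \theta_t\}, \qquad f_{q,t}(\xi^t_{\beta+1}) := f_{p,t}(\xi^t_\beta),
\]
and take $q$ to be the set of $r \in p$ such that for every $s \in \Split(p)$ with $s \sqsubset r$ the immediate successor of $s$ on the path to $r$ lies in $\OSucc_q(s)$.

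Verifying the labeling inequalities is immediate. Since $f_{p,t}$ is strictly increasing and injective into $\kappa_\alpha$ for $t \in \Split_\alpha(p)$, so is its shift $f_{q,t}$, and
\[
f_{q,t}(\xi^t_{\beta+1}) = f_{p,t}(\xi^t_\beta) < f_{p,t}(\xi^t_{\beta+1})
\]
by strict monotonicity of $f_{p,t}$, yielding $f_q < f_p$ in the sense of Definition~\ref{def:leqstr}. Because every splitting node of $p$ has at least $\kappa_0 \ge \omega$ successors, each $\theta_t$ is infinite, so the set of successor ordinals below $\theta_t$ has cardinality $|\theta_t|$ and hence $|\OSucc_q(t)| = |\OSucc_p(t)|$ at every $t \in \Split(p)$. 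In particular $\Stem(p)$ remains $\kappa_0$-splitting in $q$, whence $\Stem(q) = \Stem(p)$, $\Split_0(q) = \Split_0(p)$, and $q \leq_0 p$.

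The main thing to verify is that $q \in \bP$, which is the only real bookkeeping step. The $(<\!\lambda)$-closure of $q$ is inherited from $p$: any $\sqsubseteq$-chain in $q$ of length $<\lambda$ has its union in $p$ and trivially satisfies the successor-choice condition defining $q$. For $\Miller_\kappa$ and ${}^\textup{scl}\Miller_\kappa$, ever-branching is preserved because every $t \in \Split(p) \cap q$ keeps the full cardinality of $\Succ_p(t)$ in $q$, and from any $r \in q$ such a node can be reached in $q$ by following the ever-branching structure of $p$ and making a legal successor choice at each splitting node encountered along the way; for ${}^\textup{scl}\Miller_\kappa$ the splitting-closure transfers directly from $p$ together with this cardinality preservation. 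For $\Laver_\kappa$ the stem is unchanged and every node above it in $q$ still has $\kappa_{\dom(t)}$-many successors. The only potential obstacle is that dropping elements at limit indices might shrink some $\OSucc_q(t)$ below its required cardinality; this is ruled out because we drop only at limit and zero indices, and infinite ordinals have as many successor-indexed elements as elements overall.
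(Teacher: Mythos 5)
Your construction is correct and rests on exactly the same combinatorial kernel as the paper's proof: at each splitting node you discard the zero- and limit-indexed ordinal successors and relabel each survivor by the label of its predecessor in the increasing enumeration, which is precisely the paper's Equation~\eqref{nonstat} together with the reassignment $f_{p_{\alpha+1},s}(\xi)=\eta_i<\eta_{i+1}=f_{p,s}(\xi)$. Where you genuinely diverge is in how the shrink is organized. The paper performs it one splitting level at a time along a slow fusion sequence, so that membership of the resulting tree in $\bP$ is delegated wholesale to Lemmas~\ref{slow_fusion_lemma} and~\ref{August30}; you perform the shrink simultaneously at every splitting node and must therefore certify directly that the result is a condition. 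That certification is the only delicate point, and it is the thinnest part of your write-up: the phrase ``can be reached in $q$ by \dots\ making a legal successor choice at each splitting node encountered along the way'' hides the limit case of the navigation. To make it airtight, say explicitly that (a) the union $u$ of a $\sqsubseteq$-increasing chain of legally reached nodes lies in $p$ by the $(<\!\lambda)$-closure of the condition $p$ and lies in $q$ because every splitting node strictly below $u$ is strictly below some member of the chain, and (b) the $\sqsubseteq$-minimal element of $\Split(p)$ above $u$ automatically survives into $q$, since no successor choices are made on the interval between $u$ and that node; iterating (a) and (b) yields splitting nodes of every degree above any $r\in q$, because your shrink is local and cardinality-preserving at every node. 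With that spelled out, ever-branching, $(<\!\lambda)$-closure, splitting-closure and the Laver requirement all follow as you indicate, one gets $\Split_\alpha(q)=\Split_\alpha(p)\cap q$, so $(q,f_q)$ is a labeled tree, and the inequalities $f_{q,s}(\xi)<f_{p,s}(\xi)$ are immediate from strict monotonicity of $f_{p,s}$. What each approach buys: yours is shorter and self-contained for this particular lemma; the paper's slow-fusion formulation is reusable machinery that also drives Lemmas~\ref{existence_0} and~\ref{existence_2}, where the modification at a splitting node is not cardinality-preserving and a one-shot local argument of your kind would not transfer unchanged.
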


\begin{proof}
We let $q_{0} = p$, $f_{q_0} = f_p$ and we construct $q$ along with 
$f_{q,s}$, $s \in \Split_\alpha(q_\alpha)$, by a slow fusion sequence
$\langle p_\alpha \such \alpha < \lambda\rangle$. In step $\alpha$ the function $f_{p_\alpha, s}$ will be defined for $s \in \Split_\alpha(p_\alpha)$.

In successor steps, given $p_\alpha$ we define for $s \in \Split_\alpha(p_{\alpha+1})$ a successor set $\Succ_{p_{\alpha+1}}(s)\subseteq \Succ_{p_\alpha}(s)$ and a function $f_{p_{\alpha+1},s}$ as follows: 
Let the range of $f_{p_\alpha,s}$ be enumerated increasingly as $\langle\eta_i \such0 \leq  i < \kappa_{\alpha}\rangle$. We define the lower part of $p_{\alpha+1}$ such that for each $s \in \Split_{\alpha+1}(p_{\alpha}) = \Split_{\alpha+1}(p_{\alpha+1})$ we have
\begin{equation}\label{nonstat}
\Succ_{p_{\alpha+1}}(s) = \{s \concat \xi \such 
\exists i< \kappa_{\alpha}, f_{p,s}(\xi) =  \eta_{i+1}\}.
\end{equation}

The $f_{p,s}$ is not a misprint here: This is the only stage of the construction when we refer to direct successors of $s$. 
Moreover, we define $p_{\alpha+1}$ such that  $\Succ_{p_{\alpha+1}}(s)=\dom(f_{p_{\alpha+1}, s})$ is the set of these $\xi$'s and for $i < \kappa_\alpha$, we let $f_{p_{\alpha+1},s}(\xi) = \eta_i< \eta_{i+1} = f_{p,s}(\xi)$.
The function $f_{p_{\alpha+1},s}$  is also one-to-one and increasing. We define the higher part of the condition $p_{\alpha+1}$ by letting $p_{\alpha+1} \rest t = p_\alpha \rest t$ for $t \in  \Succ_{p_{\alpha+1}}(s)$, $s \in \Split_{\alpha+1}(p_{\alpha+1})$.

In the limit steps $\alpha\leq \lambda$, we first take the intersection $q_\alpha = \bigcap\{p_\beta \such \beta < \alpha\}$ and invoke Lemma~\ref{slow_fusion_lemma}.
For $\alpha<\lambda$, we strengthen $q_\alpha$ to $p_\alpha$ as in the successor step using $\Split_\alpha(q_\alpha)$ as a starting point. According to Lemma~\ref{August30}, $\seq{p_\alpha}{\alpha<\lambda}$ is a slow fusion sequence.

In the end we let $q$ be the fusion of $p_\alpha$, $\alpha < \lambda$, and for $s \in \Split_\alpha(p_\alpha)$ we let $f_{q,s} = f_{p_{\alpha}, s}$.\end{proof}

\begin{definition}
A subset $\cA \subseteq L$ is called an $L$-antichain if for any two $(p,f_p), (q, f_q) \in\cA$, $p$ and $q$ are incompatible.
\end{definition}

\begin{lemma} \label{supplement} Suppose $(p,f_p) \in L$. Suppose that $\cA$ is a maximal $L$-antichain in $L_{(p,f_p)} = \{(q,h) \in L \such (q,h) \prec^\textup{horiz} (p,f_p)\}$.
Then the set $\pi_1(\cA)$ of first components of $\cA$ is a maximal antichain in $\bP$.
\end{lemma}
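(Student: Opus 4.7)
The approach is to verify separately that $\pi_1(\cA)$ is an antichain in $\bP$ and that it is maximal. The antichain property follows immediately from Definition~\ref{labeled_tree} of $L$-antichain: for any two distinct $(q_1, h_1), (q_2, h_2) \in \cA$, the first components $q_1$ and $q_2$ are $\bP$-incompatible, so $\pi_1(\cA)$ is a $\bP$-antichain.

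For maximality, since every element of $\pi_1(\cA)$ lies below $p$, any $r \in \bP$ with $r \perp p$ is trivially incompatible with all of $\pi_1(\cA)$; the substantive content of the claim is therefore that every $r \in \bP$ compatible with $p$ meets some element of $\pi_1(\cA)$, which is the natural reading of ``maximal antichain in $\bP$'' in this setting where $\cA$ is defined relative to $(p, f_p)$. So let $r \in \bP$ be compatible with $p$ and fix a common extension $r_0 \le r, p$. I want to find $(q, h) \in \cA$ such that $q$ is $\bP$-compatible with $r_0$.

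The crux is to exhibit a labeled tree $(r', f_{r'}) \in L_{(p, f_p)}$ with $r' \le r_0$. Once this is established, maximality of $\cA$ as an $L$-antichain in $L_{(p, f_p)}$ produces $(q, h) \in \cA$ that is $L$-compatible with $(r', f_{r'})$; since $L$-incompatibility is defined as $\bP$-incompatibility of the first components, it follows that $q$ is $\bP$-compatible with $r'$, and hence with $r$.

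To produce $(r', f_{r'})$, the key observation is that since $r_0 \le p$, every splitting node $s$ of $r_0$ satisfies $\Succ_p(s) \supseteq \Succ_{r_0}(s)$ and is therefore also a splitting node of $p$; in particular $f_{p, s}$ is defined on a superset of $\OSucc_{r_0}(s)$, so its values may be used to guide the construction. I then adapt the slow fusion argument of Lemma~\ref{existence} to start from $r_0$ rather than from $p$: at each successor stage, for each relevant splitting node $s$, I thin the successor set via the shift rule of Equation~\eqref{nonstat} applied to $f_{p, s}$, obtaining labels $f_{r', s}$ pointwise strictly below $f_{p, s}$. Limit stages are handled by Lemma~\ref{slow_fusion_lemma}, and Lemma~\ref{August30} provides the continuation device that keeps the slow fusion sequence alive through limits. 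The principal technical obstacle is book-keeping through the fact that the splitting structures of $r_0$ and $p$ may differ above the shared nodes; the persistence of splitting degree under $\le$ is what enables the transfer of labels from $(p, f_p)$ to $(r', f_{r'})$ and makes the entire argument go through.
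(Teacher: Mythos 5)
Your proposal is correct and is essentially the paper's argument: the paper proves maximality contrapositively by taking $q \le p$ incompatible with every member of $\pi_1(\cA)$ and applying Lemma~\ref{existence} directly to the labeled tree $(q, f_p \rest q)$ to obtain an element of $L_{(p,f_p)}$ below $q$, contradicting $L$-maximality of $\cA$. Your construction of $(r', f_{r'})$ by rerunning the slow fusion from $r_0$ with labels inherited from $f_p$ is exactly an application of Lemma~\ref{existence} to $(r_0, f_p \rest r_0)$, so the two proofs differ only in the direct versus contrapositive framing and in your (reasonable) explicit reading of ``maximal antichain in $\bP$'' as maximality below $p$.
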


\begin{proof} Suppose not. Then we may take $q \leq p$ such that $q $ is incompatible with any  member of $\pi_1(\cA)$. By Lemma~\ref{existence} there is $(r,f_r) \prec^\textup{horiz} (q, f_p \rest q)$.
Now $(r, f_r)$ violates the $L$-maximality of $\cA$.
\end{proof}

Now we prove Theorem~\ref{5.16_balcar_simon}.

\begin{proof}
In the Miller case we work with the dense set of conditions $p$ that have a layered sequence $\langle \Split_\alpha(p) \such \alpha < \lambda\rangle$ of splitting maximal antichains and no other splitting.
These are called weakly splitting normal conditions in  \cite[Def. 2.8]{DobrinenHathawayPrikry}.
 These conditions correspond to the $T'$'s and
$\Split_\alpha(T') = \{\bar{\varphi}(t \rest (\kappa_\alpha +1)) \such  t \in \prod_{\beta \leq \alpha+1} \kappa_\beta\}$ of the proof of (1) $\rightarrow$ (2) in 
Lemma~\ref{perfect_equivalence}. Nodes in the maximal antichain $\Split_\alpha(T')$ have $\kappa_\alpha$ immediate successors in the condition.

By induction on $n < \omega$ we choose a sequence maximal $L$-antichains $\seq{\cA_n}{n<\omega}$ such that
\[
\forall m<n<\omega, \forall (r,f_r) \in \cA_n,\exists (q, f_q) \in \cA_m, (r,f_r) \prec^\textup{horiz}(q,f_q).
\]
We let $\cA_0 = \{(p,\id)\}$ where $\id$ is the naturally defined identity function.

We carry out the successor step:
For each $(q,f_q) \in \cA_n$ we take a maximal $L$-antichain $\cA_{n+1}^{(q,f_q)}$ $\prec^\textup{horiz}$-below $(q,f_q)$ and we let 
$\cA_{n+1} = \bigcup \{\cA_{n+1}^{(q,f_q)} \such (q,f_q) \in \cA_n\}$.

\textbf{Claim:} There is no condition $r \in \bP$ such that 
\[
\forall n<\omega, \exists q_n \in \pi_1(\cA_n), r \leq q_n.
\]

Suppose the contrary and fix such an $r$ and a witnessing sequence $\seq{q_n}{n<\omega}$ such that $(q_n,f_{q_n}) \in \cA_n$ and $(q_{n+1},f_{q_{n+1}}) \prec^\textup{hor} (q_n,f_{q_n})$ for all $n<\omega$. 
For $s \in \Split(r)$ and $s \concat \xi \in 
r$ let 
\[
f'_{r,s}(\xi) = \min\{f_{q_n, s}(\xi)  \such n < \omega\}.
\]
This is countable-to-one.
Now focus on $\Stem(r)=s$. It is $\kappa_0$-splitting, and for any $n < \omega$, the function $f_{q_n,s}$ is defined.
For $n < \omega$ let 
\[
X_n = \{\xi \such s \concat \xi \in \Succ_r(s) \wedge f'_{r,s}(\xi) = f_{q_n, s}(\xi)\}.
\]
There is some $n \in \omega$ such that 
$|X_n|= \kappa_0$ (recall that we assume $\cf(\kappa_0)>\omega$). Then for such a $\xi \in X_n$,
\[
f_{q_n, s}(\xi) = f'_{r,s}(\xi) = \min \{f_{q_m, s}(\xi) \such m < \omega\} \leq f_{q_{n+1}, s}(\xi).
\]
However, this contradicts $f_{q_{n+1},s}(\xi) < f_{q_n,s}(\xi)$. This proves the claim.

Thus the maximal antichains $\{\pi_1(\cA_n) \such n < \omega \}$ witness the failure of $\omega$-distributivity.\end{proof}

\subsection{The Stationary Balcar--Simon Trick}
\label{SubS4.3}


Now we present a version of the Balcar-Simon trick that does not apply to the standard cardinal-wise-splitting case, in which splitting into non-stationary sets is permissable. For $\Laver_{\kappa}^\textup{stat}$, where there is splitting into
stationary subsets of $\kappa_{\dom(t)} \cap \cof(\omega)$,
a construction along the lines of Equation~\eqref{nonstat} contradicts Fodor's lemma. We refute $\omega$-distributivity instead via an analysis of Solovay's proof of splitting a stationary subset $S \subseteq \mu \cap \cof(\omega)$
into $\mu$ mutually disjoint stationary sets. The main point is that we use a fixed ladder system.

Let us first recall \cite[Prop 1.9]{Jech_stat}.

\begin{proposition}\label{Jech_elementary_Solovay} Let $\mu$ be an uncountable regular cardinal.
Let $S \subseteq \mu \cap \cof(\omega)$ be a stationary set. Then there are $\mu$ pairwise disjoint stationary subsets $S_i$, $i < \mu$, of $S$.
\end{proposition}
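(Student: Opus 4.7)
The plan is to use the standard Solovay-style argument based on a fixed ladder system for $S$. For each $\alpha \in S$, which has cofinality $\omega$, fix an increasing sequence $\langle a_\alpha(n) \such n < \omega \rangle$ cofinal in $\alpha$. The two main tools are a pigeonhole lemma isolating a single good index $n^* < \omega$ for the ladder, and a $\mu$-length iterated application of Fodor's lemma to extract disjoint stationary fibers of the regressive map $\alpha \mapsto a_\alpha(n^*)$.

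First I would prove the pigeonhole claim: there exists $n^* < \omega$ such that for every $\eta < \mu$ the set $T^{n^*}_\eta := \{\alpha \in S \such a_\alpha(n^*) \geq \eta\}$ is stationary in $\mu$. Suppose not. Then for each $n < \omega$ there is $\eta_n < \mu$ with $T^n_{\eta_n}$ non-stationary, witnessed by a club $C_n$ disjoint from $T^n_{\eta_n}$. Since $\cf(\mu) > \omega$, the supremum $\eta^* = \sup_n \eta_n$ lies below $\mu$ and $C := \bigcap_n C_n$ is a club in $\mu$. Any $\alpha \in S$ with $\alpha > \eta^*$ satisfies $a_\alpha(n) > \eta^* \geq \eta_n$ for some $n$, since $a_\alpha$ is cofinal in $\alpha$; hence $\alpha \in T^n_{\eta_n}$. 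Therefore $(S \setminus (\eta^*+1)) \cap C = \emptyset$, contradicting the stationarity of $S$.

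Now fix such $n^*$ and set $h(\alpha) = a_\alpha(n^*)$, a regressive function on $S$. By recursion on $\xi < \mu$ I would construct strictly increasing $\delta_\xi < \mu$ and pairwise disjoint stationary sets $S_\xi \subseteq S$. At stage $\xi$, set $\beta_\xi = \sup_{\eta < \xi} \delta_\eta + 1$, which is below $\mu$ by the regularity of $\mu$. The set $T^{n^*}_{\beta_\xi}$ is stationary by the pigeonhole step, so Fodor applied to $h \rest T^{n^*}_{\beta_\xi}$ yields some $\delta_\xi \geq \beta_\xi$ for which $S_\xi := \{\alpha \in T^{n^*}_{\beta_\xi} \such h(\alpha) = \delta_\xi\}$ is stationary. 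The $\delta_\xi$'s are strictly increasing, hence the $S_\xi$'s are pairwise disjoint subsets of $S$, and we obtain the required partition.

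The only genuinely delicate point is the pigeonhole step, whose success rests on $\cf(\mu) > \omega$ combined with every element of $S$ having cofinality $\omega$ (so the ladder-point argument reaches above $\eta^*$); everything else is a routine deployment of Fodor's lemma and of the regularity of $\mu$ to keep the recursion inside $\mu$.
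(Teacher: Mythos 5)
Your proposal is correct and follows essentially the same route as the paper: the same ladder-system pigeonhole claim (the paper phrases the contradiction via $(<\!\mu)$-completeness of the nonstationary ideal rather than intersecting explicit clubs $C_n$, but the content is identical), followed by Fodor applied to the regressive map $\alpha \mapsto a_\alpha(n^*)$ on the tails $T^{n^*}_\eta$. The only cosmetic difference is that you organize the final step as an explicit recursion producing strictly increasing $\delta_\xi$, whereas the paper applies Fodor to each $S(\eta)$ simultaneously and observes that the resulting values $\gamma(\eta)\geq\eta$ are cofinal in $\mu$ and hence $\mu$-many; both correctly yield the disjointness from the fact that the fibers of a single function are disjoint.
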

 
\begin{proof} For each $\alpha \in S$, choose an increasing sequence $\langle a_n^\alpha \such n < \omega \rangle$
converging to $\alpha$.

We claim that there is an $n$ such that for all $\eta < \mu$, there are stationarily many $\alpha\in S$ such that 
$a^\alpha_n \geq \eta$.  Otherwise there exists, for each $n$, some $\eta_n$ such that $a^\alpha_n \geq \eta_n$
for only a nonstationary set of $\alpha$'s in $S$. From the $(<\mu)$-completeness of the ideal of non-stationary subsets of $\mu$, it follows that for all but a nonstationary set of $\alpha$'s, the sequences $\langle a^\alpha_n \such n < \omega \rangle$
are bounded by $\sup \{\eta_n \such n < \omega \}$. This is a contradiction.

Thus let $n$ be minimal such that for all $\eta<\mu$, the set $S(\eta) = \{\alpha \in S \such a^\alpha_n \geq \eta\}$ is stationary. For each $\eta$, the function $f(\alpha) = a^\alpha_n$, defined on $S(\eta)$, is regressive and so, by Fodor's Theorem, there is some $\gamma(\eta) \geq \eta$ such that $T_\eta = \{\alpha \in S(\eta) \such  a^\alpha_n = \gamma(\eta)\}$ is stationary. We collect all these $\gamma(\eta)$'s into $\{\eta_{i} \such i < \mu\}$ and let
$S_i = \{\alpha \in S \such a^\alpha_n = \eta_i\}$.\end{proof}

\begin{definition}\label{def:leqstr_2}
Let $\langle\kappa_\beta \such \beta < \lambda \rangle$ be an increasing sequence of regular cardinals converging to $\kappa$ and $\aleph_0 < \lambda = \cf(\kappa) < \kappa_0$. For each $\beta < \lambda$ we
fix a ladder system $\overline{a}_\beta=\langle \langle a^\alpha_{\beta, n} \such n < \omega 
\rangle \such \alpha \in \kappa_\beta \cap \cof(\omega)\rangle$. 
Let $S \subseteq \kappa_\beta$ be stationary. We let
$n_\beta(S)$  (depending on $\bar{a}_\beta$, we indicate only $\beta$) be the least $n$ (from above, with $\kappa_\beta$ now in the role of $\mu$) such that for each $\eta < \kappa_\beta$ 
the set $\{\alpha \in S \such a^\alpha_{\beta,n} \geq \eta\}$ is stationary in $\kappa_\beta$.

For stationary subsets $S, T \subseteq \kappa_\beta \cap \cof(\omega)$, we write $T \subseteq_{\beta,{\rm str}} S$ if $n_\beta(T) > n_\beta (S)$.

Finally for $p, q \in \Laver_{\kappa,{\rm stat}}$ we let $q \prec^\textup{stat} p$ if for $\beta<\lambda$ and any $t \in \Split(q)$ with $\dom(t) = \beta$, $\Succ_q(t) \subseteq_{\beta,{\rm str}} \Succ_p(t)$.
\end{definition}

\begin{lemma} \label{existence_2}
For any $p \in \Laver_{\kappa}^\textup{stat}$  there is $q \prec^\textup{stat} p$.
\end{lemma}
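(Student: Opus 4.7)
The plan is to construct $q$ by recursion on the levels above $\Stem(p)$, using the Solovay splitting argument embedded in the proof of Proposition~\ref{Jech_elementary_Solovay} to thin each ordinal successor set in a way that strictly increases the invariant $n_\beta$.

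First, I would extract from the proof of Proposition~\ref{Jech_elementary_Solovay} the following sub-claim: given a stationary $S \subseteq \kappa_\beta \cap \cof(\omega)$ with $n_\beta(S) = n$, there exists a stationary $S' \subseteq S$ with $n_\beta(S') > n$. Indeed, by the definition of $n_\beta(S)$, for every $\eta < \kappa_\beta$ the set $\{\alpha \in S \such a^\alpha_{\beta,n} \geq \eta\}$ is stationary; applying Fodor's Theorem to the regressive map $\alpha \mapsto a^\alpha_{\beta,n}$ on this set we obtain some $\eta^* \geq \eta$ such that $S' := \{\alpha \in S \such a^\alpha_{\beta,n} = \eta^*\}$ is stationary. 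On $S'$ the value $a^\alpha_{\beta,n}$ is bounded by $\eta^*$, so $\{\alpha \in S' \such a^\alpha_{\beta,n} \geq \eta^* + 1\} = \emptyset$ is non-stationary; hence $n_\beta(S') \neq n$. Since $T \subseteq S$ always implies $n_\beta(T) \geq n_\beta(S)$ (a non-stationary set stays non-stationary when restricted), we conclude $n_\beta(S') > n$.

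Next, I would set $\Stem(q) := \Stem(p)$ and define the tree above the stem by recursion on $\alpha < \lambda$. For each $t$ already placed in $q$ with $\dom(t) = \dom(\Stem(p)) + \alpha$, I apply the sub-claim to $\OSucc_p(t)$ and declare $\OSucc_q(t)$ to be the resulting stationary subset $S'$, so that $n_{\dom(t)}(\OSucc_q(t)) > n_{\dom(t)}(\OSucc_p(t))$. At a limit stage $\delta < \lambda$, every sequence $u$ of length $\dom(\Stem(p)) + \delta$ all of whose proper initial segments lie in $q$ also lies in $p$ by the $(<\!\lambda)$-closure of $p$; I include each such $u$ in $q$ and thin its successor set as above. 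Since $\lambda < \kappa_0 \leq \kappa_{\dom(u)}$ and the construction at a single node preserves stationarity, this recursion never stops.

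Finally, I would verify that $q \in \Laver_\kappa^\textup{stat}$: it is $(<\!\lambda)$-closed by the limit clause, it has a unique stem (inherited from $p$), and every node above the stem has stationary ordinal successor set by the choice at each stage. The condition $q \prec^\textup{stat} p$ then reads off directly, since for every $t \in \Split(q)$ with $\dom(t) = \beta$ we arranged $\OSucc_q(t) \subseteq_{\beta,{\rm str}} \OSucc_p(t)$. The only nontrivial ingredient is the sub-claim about strict increase of $n_\beta$; the remainder is a routine node-by-node recursion fitting within the closure already enjoyed by conditions in $\Laver_\kappa^\textup{stat}$, so no slow fusion machinery is required here.
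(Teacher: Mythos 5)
Your proof is correct. The mathematical core --- a single application of Fodor's Theorem at each node, which pins $a^\alpha_{\beta,n}$ to a constant value $\eta^*$ on a stationary subset and thereby forces the invariant $n_\beta$ to increase strictly (using the monotonicity $T \subseteq S \Rightarrow n_\beta(T) \geq n_\beta(S)$, which you correctly isolate and need in order to rule out $n_\beta(S') < n$) --- is exactly the content the paper extracts from Proposition~\ref{Jech_elementary_Solovay}. Where you diverge is in the packaging: the paper's proof is the one-line ``via a suitable slow fusion sequence, similar to Lemma~\ref{existence}'', i.e., it organizes the level-by-level thinning as a $\le_{\gamma,\alpha}$-decreasing sequence of conditions and invokes Lemma~\ref{slow_fusion_lemma} to see that the intersection is a condition, whereas you build the subtree directly by recursion on levels and verify $(<\!\lambda)$-closure of the result by hand. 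For $\Laver_\kappa^\textup{stat}$ these are the same construction: slow fusion in the Laver case alters exactly one level per stage and copies everything above it, so your direct recursion is precisely the Laver instance of the fusion sequence with the bookkeeping stripped away. Your closing remark that no slow fusion machinery is required is therefore justified here --- the limit nodes you need exist because each individual condition is already $(<\!\lambda)$-closed --- but it would not survive the passage to the Miller versions, where splitting nodes do not sit on fixed levels and the antichains $\Split_\alpha$ must be tracked through the limits; that is why the paper routes all of Section~\ref{balcar-simon-sec} through the uniform slow-fusion formalism.
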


The lemma is proved via a suitable slow fusion sequence, similar to Lemma~\ref{existence}.

\begin{theorem}\label{not_omega_distr_2}
The forcing $\Laver_{\kappa}^\textup{stat}$ is  not $\omega$-distributive. 
\end{theorem}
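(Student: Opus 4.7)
The strategy mirrors the proofs of Theorems~\ref{March_31_fruit} and~\ref{5.16_balcar_simon}: iterate the descending relation $\prec^\textup{stat}$ countably many times to produce a witnessing $\omega$-sequence of maximal antichains, and then extract a contradiction from the strict inequality built into $\subseteq_{\beta,{\rm str}}$ together with the fact that $n_\beta(\cdot) \in \omega$.

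By recursion on $n<\omega$ I would construct maximal antichains $\mathcal{A}_n \subseteq \Laver_\kappa^\textup{stat}$ such that every $q \in \mathcal{A}_{n+1}$ satisfies $q \prec^\textup{stat} p$ for some $p \in \mathcal{A}_n$. Start with any maximal antichain $\mathcal{A}_0$. Given $\mathcal{A}_n$, for each $p \in \mathcal{A}_n$, Lemma~\ref{existence_2} applied below every $r \leq p$ yields some $q \prec^\textup{stat} r$; this $q$ also satisfies $q \prec^\textup{stat} p$ by the monotonicity of $n_\beta$ discussed below, so the conditions $\prec^\textup{stat}$-below $p$ are dense beneath $p$. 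Pick a maximal antichain $\mathcal{A}_{n+1}^p$ among them and set $\mathcal{A}_{n+1} = \bigcup_{p \in \mathcal{A}_n} \mathcal{A}_{n+1}^p$; the analogue of Lemma~\ref{supplement} then guarantees that $\mathcal{A}_{n+1}$ is a maximal antichain in $\Laver_\kappa^\textup{stat}$.

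To finish, I would show that no $r \in \Laver_\kappa^\textup{stat}$ extends an element of every $\mathcal{A}_n$. Supposing $r \leq q_n \in \mathcal{A}_n$ for each $n$, we may assume $q_{n+1} \prec^\textup{stat} q_n$. Since every node above the stem of a $\Laver_\kappa^\textup{stat}$-condition splits and $\Stem(r) \sqsupseteq \Stem(q_n)$ for every $n$, fix any $t \sqsupseteq \Stem(r)$ in $r$ with $\beta = \dom(t)$; then $t$ is a splitting node in $r$ and in every $q_n$, with $\Succ_r(t) \subseteq \Succ_{q_{n+1}}(t) \subseteq \Succ_{q_n}(t)$. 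Directly from the definition of $n_\beta$, if $T \subseteq S$ are both stationary then $n_\beta(T) \geq n_\beta(S)$ (any $n$ witnessing $n_\beta(T)$ witnesses $n_\beta(S)$, since the stationary sets produced from $T$ are subsets of those produced from $S$). Combined with the strict inequality $n_\beta(\Succ_{q_{n+1}}(t)) > n_\beta(\Succ_{q_n}(t))$ coming from $q_{n+1} \prec^\textup{stat} q_n$, an induction on $n$ gives $n_\beta(\Succ_{q_n}(t)) \geq n$ and hence $n_\beta(\Succ_r(t)) \geq n$ for every $n$---impossible, since $n_\beta(\Succ_r(t)) < \omega$.

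The main obstacle I anticipate is purely the bookkeeping needed to check that $\mathcal{A}_{n+1}$ is a maximal antichain in the full forcing, which transfers verbatim from Lemma~\ref{supplement}. The rank-drop contradiction via $n_\beta \in \omega$ is immediate and is precisely why $\prec^\textup{stat}$ was defined with a strict inequality on $n_\beta$-values.
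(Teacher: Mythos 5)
Your proposal is correct and follows essentially the same route as the paper: build a matrix of maximal antichains descending in $\prec^\textup{stat}$ via Lemma~\ref{existence_2} (with the density/maximality bookkeeping as in Lemma~\ref{supplement}), then derive a contradiction from the fact that the integer $n_\beta(\cdot)$ must strictly increase along any chain refining all the antichains. Your explicit verification of the monotonicity $T \subseteq S \Rightarrow n_\beta(T) \geq n_\beta(S)$ is a detail the paper leaves implicit, but the argument is the same.
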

 
 Now the rest of the proof is as in Theorem~\ref{5.16_balcar_simon}. We use a matrix $\seq{\cA_n}{n < \omega}$ such that for any $p \in \cA_n$ and any $q \in \cA_{n+1}$, $q \leq p$ implies that $q \prec^\textup{stat} p$. If $\seq{p_n}{n<\omega}$ is any descending sequence such that $p_n \in \cA_n$ for all $n < \omega$, then $\seq{p_n}{n<\omega}$ has no lower bound $q$, because for such a $q$ the number $n_{\dom(\Stem(q))}(\OSucc_q(\Stem(q)))$ would not exist.

\section{Failures of Three-Parameter Distributivity}\label{3par-sec}

\subsection{A Cofinal Function from $\omega$ to $\kappa$ in the Pre-Perfect Case}
\label{SubS4.4}

Let $\bP$ be any of the pre-perfect versions and let $\lambda$ be uncountable. In the Miller case, we restrict the forcing to the dense set of conditions $p$ in which  only the nodes in 
$\bigcup \{\Split_\alpha(p)\such \alpha < \lambda\}$ have more than one successor.

We use the technical notion of a minimal branch.

\begin{definition}\label{minimal_branch}
 Let $\chi > (2^\kappa)^+$ be a regular cardinal and let $<_\chi$ be a well-ordering of $H(\chi)$, the set of sets of hereditary cardinality $<\chi$.

For any $t \in p \in {}^{\textup{pre}}\bP_\kappa$ we let $b_{\rm min}(p,t)$ be the $<_\chi$-least branch of $p$ such that $t \in b$.
\end{definition}

\begin{definition}\label{pre-perfect_splitting}
For each $p \in {}^{\textup{pre}}\bP_\kappa$  we define its standardized pre-perfect subtree $p^{\textup{pre}}$ 
as follows. By induction on $\alpha< \lambda = \cf(\kappa)$ we define subsets $S(p,\alpha)\subseteq p$ and $\Split^{\textup{pre}}(p,\alpha)\subseteq \Split_\alpha(p)$:

\begin{enumerate}

\item $\Split^{\textup{pre}}(p,0) = \{\Stem(p)\}$.

\item For $\beta < \lambda$, we let $S(p,\beta) = \bigcup \{\Succ_p(t) \such t \in \Split^{\textup{pre}}(p,\beta)\}$.

\item
We let 
\[\Split^{\textup{pre}}(p,\alpha+1) = \{ t \in \Split_{\alpha+1}(p) \such
\exists s \sqsubseteq t, s \in S(p,\alpha)\}.
\]
\item For $\delta < \lambda$ limit we let
\begin{align*}
\Split^{\textup{pre}}(p,\delta) =  
\{t \in \Split_\delta(p) \such  & \exists \alpha < \delta, \exists s \in S(p,\alpha), \exists \eps < \lambda,
\\
&
 t = (b_{\rm min}(p,s) \nrest \eps)  \in \Split_\delta(p)\}.
\end{align*}


\end{enumerate}

We let 
\[
p^{\textup{pre}} = \{s \in p \such \exists \alpha < \lambda, \exists t \in S(p,\alpha),  s \sqsubseteq  t\} .
\]
\end{definition}

Then $p^{\textup{pre}} \in {}^{\textup{pre}}\bP_\kappa$ and $p^{\textup{pre}} \subseteq p$. Note that $(p^{\textup{pre}})^{\textup{pre}} = p^{\textup{pre}}$ and  no splitting antichains disappear, i.e$.$,
\[
\emptyset \neq \Split^{\textup{pre}}(p^{\textup{pre}},\alpha)= \Split_\alpha(p^\textup{pre})  \subseteq \Split_\alpha(p).
\]
In the Laver case,
 $p^{\textup{pre}}( \dom(\Stem(p^{\textup{pre}}))  + \alpha) = \Split^{\textup{pre}}(p^{\textup{pre}},\alpha)$.
In addition, for any $q \leq p^{\textup{pre}}$ we have $q = q^{\textup{pre}}$.

\begin{definition}
We call $p$ \emph{standardized} if $p = p^{\textup{pre}}$.
\end{definition}


The construction in Definition~\ref{pre-perfect_splitting} implies the following:

\begin{fact} The set of standardized pre-perfect trees  is an open dense subset of $\bP^{\textup{pre}}$.\end{fact}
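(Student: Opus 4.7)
The plan is to verify density and openness separately.

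For density, I start with an arbitrary $p \in {}^{\textup{pre}}\bP_\kappa$ (in the Miller case restricted to the dense set of weakly splitting normal conditions) and show $p^{\textup{pre}} \leq p$ lies in the target set. Containment is immediate since every element of $p^{\textup{pre}}$ sits below some node in $S(p,\alpha) \subseteq p$. I establish that $p^{\textup{pre}}$ is pre-perfect in two steps. First, ever-branching is shown by transfinite induction on $\alpha$: each node $s \in p^{\textup{pre}}$ satisfies $s \sqsubseteq t$ for some $t \in S(p,\beta)$, and the recursive definition of $\Split^{\textup{pre}}(p,\cdot)$ supplies splitting nodes in $\Split^{\textup{pre}}(p,\gamma)$ above $t$ for arbitrarily large $\gamma$; at successor stages one uses ever-branching of $p$ directly, and at limits one uses that $b_{\min}(p,t)$ is cofinal and meets $\Split_\gamma(p)$ on a tail. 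Second, non-stopping is witnessed precisely by the minimal branches $b_{\min}(p,s)$ threading through the $S(p,\alpha)$'s: given any node of $p^{\textup{pre}}$, one tracks a coherent cofinal branch built from these minimal branches at limit stages.

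Next, I verify idempotence $(p^{\textup{pre}})^{\textup{pre}} = p^{\textup{pre}}$ by transfinite induction on $\alpha$ showing $\Split^{\textup{pre}}(p^{\textup{pre}},\alpha) = \Split^{\textup{pre}}(p,\alpha)$ and $S(p^{\textup{pre}},\alpha) = S(p,\alpha)$. The inductive step relies on the fact that $b_{\min}(p^{\textup{pre}},s) = b_{\min}(p,s)$ whenever $b_{\min}(p,s)$ already lies in $p^{\textup{pre}}$, which in turn follows from the construction, since the minimal branches used to define $p^{\textup{pre}}$ at limit stages are themselves included in $p^{\textup{pre}}$.

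For openness, I invoke the claim stated in the text that $q \leq p^{\textup{pre}}$ implies $q = q^{\textup{pre}}$. I verify by transfinite induction that $\Split^{\textup{pre}}(q,\alpha) = \Split_\alpha(q)$. Stage $0$ is immediate from the convention that $\Stem(q)$ is $\kappa_0$-splitting. Successor stages follow because every node of $q$ extends a splitting node at every previous level. At limit stages $\delta$, I argue that each $t \in \Split_\delta(q)$ lies on $b_{\min}(q,s)$ for some $s \in S(q,\alpha)$ with $\alpha < \delta$, using that $q$ inherits from $p^{\textup{pre}}$ the structural property that limit splitting nodes are exactly the level-$\delta$ restrictions of minimal branches through earlier successors of lower-level splitting nodes.

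The main obstacle will be the limit stages of the openness argument: the well-order $<_\chi$ ranks branches of $q$ potentially differently from branches of $p$, so $b_{\min}(q,s)$ need not coincide with $b_{\min}(p,s)$. The delicate point is verifying that even with this shift, every limit splitting node of $q$ is captured as a prefix of some $b_{\min}(q,s)$. This requires tracking how the standardization of $p$ constrains $q$'s splitting structure, and exploiting the coherence of $<_\chi$ across the nested trees $q \subseteq p^{\textup{pre}}$.
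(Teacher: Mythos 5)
Your decomposition is exactly the paper's: density via the map $p \mapsto p^{\textup{pre}}$ together with idempotence, and openness via the claim that $q \le p^{\textup{pre}}$ implies $q = q^{\textup{pre}}$. In fact the paper gives no proof at all beyond asserting that these properties follow from the construction in Definition~\ref{pre-perfect_splitting}, so your job was to supply the missing argument, and on the density side your outline is a reasonable expansion of what the paper leaves implicit.

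The difficulty is that on the openness side your text stops at precisely the point where something has to be proved. You correctly observe that $b_{\min}(q,s)$ is computed over the branches of $q$, a proper subset of the branches of $p^{\textup{pre}}$, so the limit stages do not simply transfer from $p^{\textup{pre}}$ to $q$; but you then defer to ``the coherence of $<_\chi$ across the nested trees'' without saying what that coherence is or why it closes the gap. Concretely: let $\delta$ be a limit, let $t \in \Split_\delta(q)$ be a proper limit of splitting nodes $u_i$ of $q$, and let $s_i \in \Succ_q(u_i)$ be the successor below $t$. For $t$ to enter $\Split^{\textup{pre}}(q,\delta)$ some $b_{\min}(q,s)$ with $s$ in a lower skeleton level must pass through $t$; but $b_{\min}(q,s_i)$ is only forced to pass through $s_i$ and the next splitting node $u_{i+1}$ of $q$, where it may choose a successor other than $s_{i+1}$ and leave the path to $t$ for good. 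If this happens for every $i$, then $t$ and everything above it are omitted from $q^{\textup{pre}}$ and $q \neq q^{\textup{pre}}$; nothing in your proposal rules this out, and ruling it out \emph{is} the content of the claim. A parallel worry affects your density step: you assert that $b_{\min}(p,s)$ meets $\Split_\gamma(p)$ on a tail, but pre-perfect trees are not $(<\!\lambda)$-closed, and ever-branching only guarantees high-degree splitting nodes somewhere above a node, not on a prescribed cofinal branch, so the non-emptiness of $\Split^{\textup{pre}}(p,\delta)$ at limit $\delta$ also needs a genuine argument. Until the limit-stage claims are actually established (or the standardization is modified so that the minimal branches of $q$ are forced to agree with those of $p^{\textup{pre}}$), the openness half, and with it the Fact, is not proved by your write-up.
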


For $s \sqsubseteq t \in p$ we write $[s,t) = \{ r \in p \such s \sqsubseteq r \sqsubset t\}$. We let $t_{-1} = \emptyset$.

We use the following, which in the special case of allowing only eventually zero nodes is proved in \cite[Lemma 7.1]{DobrinenHathawayPrikry}:

\begin{proposition} For $p= p^{\textup{pre}} \in \bP$,  any node $s \in p$, has only finitely many  different $b_{\rm min}(p,t)$ for $t \in s$.\end{proposition}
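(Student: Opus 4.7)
The plan is to control the sequence of values $b_{\textup{min}}(p,t)$ as $t$ ranges over the initial segments of $s$, combining a monotonicity property of $b_{\textup{min}}$ with the recursive skeleton built into $p^{\textup{pre}}$ by Definition~\ref{pre-perfect_splitting}.

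I would begin with two elementary observations. By definition of $b_{\textup{min}}$ as the $<_\chi$-least cofinal branch of $p$ through a given node, if $t_1 \sqsubseteq t_2$, then every branch through $t_2$ is already a branch through $t_1$, so $b_{\textup{min}}(p,t_1) \leq_\chi b_{\textup{min}}(p,t_2)$. Moreover, equality holds exactly when $t_2 \sqsubseteq b_{\textup{min}}(p,t_1)$: if $t_2$ lies on $b_{\textup{min}}(p,t_1)$, then this branch is itself a candidate minimum for $t_2$, and the converse is immediate since $b_{\textup{min}}(p,t_2)$ passes through $t_2$.

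Using these, I would enumerate the distinct $b_{\textup{min}}$-values in $\{b_{\textup{min}}(p,t) \such t \sqsubseteq s\}$ by selecting the moments where the value changes. Set $u_0 = \emptyset$ and $b_0 = b_{\textup{min}}(p,u_0)$; if $s \sqsubseteq b_0$, only one value appears and we are done. Otherwise, let $u_1 \sqsubseteq s$ be the shortest initial segment of $s$ with $u_1 \not\sqsubseteq b_0$, set $b_1 = b_{\textup{min}}(p,u_1)$, and iterate. By the equality criterion above, the branches $b_0 <_\chi b_1 <_\chi \ldots$ produced are pairwise distinct, and each $u_k$ is an initial segment of $s$ strictly extending $u_{k-1}$.

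The crux of the argument, and the main obstacle, is showing that this extraction must terminate after finitely many steps. Since $p = p^{\textup{pre}}$ is standardized, there exist $\alpha < \lambda$ and $v \in S(p,\alpha)$ with $s \sqsubseteq v$; fix the least such $\alpha$. Each transition $u_k \to u_{k+1}$ occurs at a splitting node of $p$ at which the branch $b_k$ departs from $s$, and by standardization this splitting node lies in some $\Split^{\textup{pre}}(p,\beta_k)$. I would then attach to each $b_k$ a rank extracted from the auxiliary minimal-branch witness $s'_k \in S(p,\alpha'_k)$ used in the limit clause of Definition~\ref{pre-perfect_splitting} to produce that splitting node, and show that these ranks strictly decrease in the well-order of the ordinals: each successive departure from the current minimal branch forces the new branch to be pinned to a structurally simpler witness in the standardization, since the earlier witnesses were already committed to carrying $b_k$. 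A strictly descending sequence of ordinals cannot be infinite, yielding the finiteness. This reduction is meant to mirror the finite-support argument in the eventually-zero case \cite[Lemma 7.1]{DobrinenHathawayPrikry} which the authors explicitly single out as the prototype: there the rank is literally the size of the support, while here it is the rank supplied by the standardization itself.
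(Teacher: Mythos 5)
Your reduction of the problem is sound and matches the easy half of the argument: the $<_\chi$-monotonicity of $b_{\rm min}(p,\cdot)$ along initial segments, the criterion for when the value changes, and the enumeration of the distinct values by departure points $u_0\sqsubset u_1\sqsubset\cdots$ (each $u_{k+1}$, $k\ge 0$, being an immediate successor of a splitting node at which $s$ leaves $b_k$) are all correct. The gap is exactly where you place it, in the termination step, and the mechanism you propose does not work: the witness levels $\alpha'_k$ extracted from the limit clause of Definition~\ref{pre-perfect_splitting} do not strictly decrease --- they strictly increase where they are defined, and they are not defined at all at successor levels. Indeed, if the $k$-th departure occurs at a node of $\Split^{\textup{pre}}(p,\beta_k)$, then $\beta_0<\beta_1<\cdots$ since these nodes are linearly ordered below $s$; and when $\beta_{k+1}$ is a limit, the segment from $s'_{k+1}\in S(p,\alpha'_{k+1})$ up to the node $b_{\rm min}(p,s'_{k+1})\nrest\eps\in\Split^{\textup{pre}}(p,\beta_{k+1})$ follows the single branch $b_{\rm min}(p,s'_{k+1})$ and so contains no departures, which forces $\beta_k\le\alpha'_{k+1}<\beta_{k+1}$, hence $\alpha'_k<\beta_k\le\alpha'_{k+1}$. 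When $\beta_{k+1}$ is a successor the splitting node is produced by the successor clause and carries no such witness. Nor does the $<_\chi$-increase $b_0<_\chi b_1<_\chi\cdots$ help, since an increasing sequence in a well-order can be infinite.

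The argument the paper intends (its proof is the single line ``by induction on $\alpha$'') runs in the opposite direction: one inducts upward on the splitting level and shows that each level contributes at most one new value. For $t\in\Split^{\textup{pre}}(p,\delta)$ let $N(t)$ count the departures strictly below $t$. For $\delta=\gamma+1$ there are no splitting nodes strictly between the witness $u\in S(p,\gamma)$ and $t$, so $N(t)\le N(t_\gamma)+1$ where $t_\gamma$ is the $\gamma$-splitting node below $t$. For $\delta$ a limit, the limit clause gives $t=b_{\rm min}(p,u)\nrest\eps$ with $u\in S(p,\alpha)$, $\alpha<\delta$; every $r$ with $u\sqsubseteq r\sqsubset t$ satisfies $r\sqsubseteq b_{\rm min}(p,u)$ and hence $b_{\rm min}(p,r)=b_{\rm min}(p,u)$, so no departures occur on that whole segment and again $N(t)\le N(t_\alpha)+1$. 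Since any $s\in p=p^{\textup{pre}}$ lies below some $v\in S(p,\gamma)$, we get $N(s)\le N(v)<\omega$. The finiteness thus comes not from a descending ordinal rank attached to successive departures, but from the fact that at each limit splitting level the standardization resets the path onto one minimal branch, collapsing the count at that level to the count at some earlier level plus at most one.
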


\begin{proof} This is shown by reading  
 the recursive definition of $p=p^{\textup{pre}}$. By induction on $\alpha$, we see that $\forall t \in p(\alpha)$, $\{b_\text{min}(s,p) \such s \sqsubseteq t \}$ is finite.\end{proof}

\begin{definition} Fix $p = p^{\textup{pre}}$. We let $t_{i-1} = \emptyset$.
\begin{align*}
\dot{f_p} = \bigl\{ \langle \check{(i, \alpha_i)}, q \rangle \such & i< n,
q \leq p, \;  \exists t_0 \dots \exists t_{n-1} 
\\
& \bigwedge_{0 \leq i <n} \bigl(t_i \sqsubseteq \Stem(q)  \wedge \dom(t_i) = \alpha_i \wedge \\
& \forall t \in [t_{i-1}, t_i), 
b_{\rm min}(p, t_{i-1})  = b_{\rm min}(p,t) \neq b_{\rm min}(p,t_i)
\bigr)
\bigr\}.
\end{align*}
\end {definition}

\begin{lemma}
The condition $p$ forces that $ \dot{f_p}$ is a cofinal function from $\omega$ to $\lambda$.
\end{lemma}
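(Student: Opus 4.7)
The plan is to verify three things: $\dot f_p$ is a functional name, every $i<\omega$ belongs to its domain, and its values are cofinal in $\lambda$. The key ingredient for all three is the following observation: for every $q\le p$ and every splitting node $t$ of $q$, some immediate successor $t'\in\Succ_q(t)$ satisfies $b_{\rm min}(p,t')\ne b_{\rm min}(p,t)$. Indeed, $b_{\rm min}(p,t)$ passes through a unique $t^*\in\Succ_p(t)$; if $t^*\in\Succ_q(t)$, then by minimality $b_{\rm min}(p,t)=b_{\rm min}(p,t^*)$, so we may take any $t'\in\Succ_q(t)\setminus\{t^*\}$, which exists because $t$ splits in $q$, and the branch $b_{\rm min}(p,t')$ passes through $t'\ne t^*$; if $t^*\notin\Succ_q(t)$, then any successor of $t$ in $q$ already works.

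First I would establish well-definedness. Let $q_1,q_2\le p$ be compatible with common extension $q$. Then $\Stem(q_1),\Stem(q_2)\sqsubseteq\Stem(q)$, and the sequence of change points $t_0\sqsubset\cdots\sqsubset t_{n-1}$ of $b_{\rm min}(p,\cdot)$ along $\Stem(q)$ is uniquely determined by $\Stem(q)$ and finite by the preceding proposition; the corresponding sequences for $\Stem(q_1)$ and $\Stem(q_2)$ are initial segments of this sequence. Hence any $i$ for which both $q_1$ and $q_2$ contribute a value for $\dot f_p(i)$ must contribute the same value $\dom(t_i)$, and so $\dot f_p$ is forced to be a function.

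Next, for totality, I would show that $D_i=\{q\le p:\Stem(q)\text{ has at least }i+1\text{ change points}\}$ is dense below $p$, by induction on $i$. Given $q\le p$, pre-perfectness supplies a splitting node $t\sqsupseteq\Stem(q)$ in $q$; the key observation supplies $t'\in\Succ_q(t)$ with $b_{\rm min}(p,t')\ne b_{\rm min}(p,t)$; passing to $q\restriction t'$, the node $t'$ lies on the unique linear initial segment of $q\restriction t'$ up to the next splitting of $q$ above $t'$, so $t'\sqsubseteq\Stem(q\restriction t')$ and the new stem registers at least one additional change point (either $t'$ itself, or some earlier node strictly above the old stem where $b_{\rm min}$ first differs from its old value). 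For cofinality, given $\beta<\lambda$, I would further insist that the splitting node $t$ be chosen at height $\ge\beta$, which is available because $q$ is ever-branching; then the newly registered change point has domain $\ge\beta$, so $q\restriction t'$ forces $\dot f_p(i)\ge\beta$ for the index $i$ of that change point.

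The main subtlety is verifying that the new change point really sits on the lengthened stem, addressed in the parenthetical above. Everything else reduces to pre-perfectness together with the proposition asserting finiteness of $\{b_{\rm min}(p,t):t\sqsubseteq s\}$ for each $s\in p$.
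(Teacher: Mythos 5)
Your proof is correct and follows essentially the same route as the paper's: extend the stem of $q$ to a splitting node at height at least $\beta$, pass to an immediate successor $t'$ whose $b_{\rm min}(p,\cdot)$-value differs, and observe that the last change point along the new stem is then $t'$ itself, of height $\geq\beta$. The only (harmless, arguably beneficial) deviation is that the paper locates its splitting node on the branch $b_{\rm min}(p,t_{n-1})$ so as to pin down the exact index $n$ of the new value, whereas you take an arbitrary high splitting node of $q$ and settle for registering \emph{some} new change point at the desired height, which is all that cofinality requires and avoids having to argue that this particular branch of $p$ meets $\Split(q)$ unboundedly often.
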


\begin{proof}
Let $\alpha  \in \lambda$ and let $q \leq p \in \bP$. There is some $n \in \omega$ such that $q \Vdash \dot{f_p} \rest n = r$  for some $r \in V \cap {}^n \lambda$ and $q$ determines just $\dot{f} \rest n$, that means that there are $t_i$, $0 \leq i < n$, $t_i \sqsubseteq \Stem(q)$, as in the definition of $\dot{f}$ and
$\dom(t_{i}) = r(i)$ and $b_{\rm min}(p,t_{i-1}) = b_{\rm min}(p, s)$ for $t_{i-1} \sqsubseteq s \sqsubset t_i$ and $b_{\rm min}(p,t_{n-1}) = b_{\rm min}(p, s)$ for $t_{n-1} \sqsubseteq s \sqsubseteq \Stem(q)$. In other words, on $\{s \such s \sqsubseteq \Stem(q)\}$ we find exactly $n$ different values of $b_{\rm min}(p,s)$, represented by
$\{b_{\rm min}(q,t_i) \such 0 \leq i < n\}$. We assume that $r(n-1) < \alpha$.

 We choose $t \in b_{\rm min}(p, t_{n-1}) \cap \Split(q)$ such that $t_{n-1} \sqsubseteq t$ and $\dom(t) \geq \alpha$. Then we take $t_n \in \Succ_q(t)$ with
$b_{\rm min}(p,t_n) \neq b_{\rm min}(p,t)$. We let $q' = q \restriction t_n$.
Then $q' \Vdash \dot{f}(n) = \dom(t_n)$ and $\dom(t_n) \geq \alpha +1$.
\end{proof}

\begin{remark} By definition of $\dot{f_p}$, $\rge(\dot{f_p})$ contains only successor ordinals. In all trees  that appear as conditions in any of our forcings any increasing sequence of nodes has at most one limit node, namely their union. Sometimes, these trees are called set-theoretic trees.\end{remark}

Now we take a maximal antichain $A$ of $p \in \bP$ such that $p = p^{\textup{pre}}$ and
let $\dot{f}$ be the mixing of $\{\dot{f_p} \such p \in A\}$.
Then we have:

\begin{theorem}\label{cofinal_fct}
$\bP$ forces that $\dot{f}$ maps $\omega$ cofinally into $\lambda$.
\end{theorem}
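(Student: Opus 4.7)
The plan is to package the previous lemma via a routine mixing argument. The preceding lemma shows that for every standardized condition $p = p^{\textup{pre}}$, we have $p \Vdash \dot{f_p} \colon \omega \to \lambda$ is cofinal. Since the set of standardized conditions is open dense in $\bP$ (by the Fact stated just before the definition of $\dot f_p$), we may apply Zorn's lemma inside this open dense set to choose a maximal antichain $A \subseteq \bP$ consisting entirely of standardized conditions. Density ensures that this antichain is maximal not merely within the dense set but within $\bP$: any $r \in \bP$ has a standardized extension $r^{\textup{pre}}$, and the latter must be compatible with some $p \in A$, so $r$ itself is compatible with $p$.

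Next, I would define $\dot f$ by the standard mixing construction, e.g.\
\[
\dot f = \bigl\{ \langle \sigma, r \rangle \such \exists p \in A\, \exists q \in \bP,\ \langle \sigma, q \rangle \in \dot{f_p} \text{ and } r \leq p,\ r \leq q \bigr\}.
\]
By general properties of mixing along an antichain, $p \Vdash \dot f = \dot{f_p}$ for every $p \in A$. Combined with the previous lemma this gives $p \Vdash \dot f \colon \omega \to \lambda \text{ is cofinal}$ for every $p \in A$.

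Finally, since $A$ is a maximal antichain in $\bP$, it is predense in $\bP$, and any $\bP$-generic $G$ over $V$ meets $A$ in a unique condition $p$; in $V[G]$ we have $\dot f^G = \dot{f_p}^G$, which is a cofinal function from $\omega$ into $\lambda$. Equivalently, $1_\bP \Vdash \dot f \colon \omega \to \lambda$ is cofinal.

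The only point requiring any genuine verification is the existence of the antichain $A$ and the well-definedness of the mixing; both follow immediately from the open density of the standardized conditions and the fact that compatibility in $\bP$ is witnessed by common standardized extensions. There is no substantive obstacle beyond this bookkeeping, since the hard content---that each individual $\dot{f_p}$ is forced by $p$ to be a cofinal $\omega$-sequence into $\lambda$---was already established in the preceding lemma.
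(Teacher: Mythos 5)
Your proposal is correct and matches the paper exactly: the paper likewise takes a maximal antichain $A$ of standardized conditions and defines $\dot f$ as the mixing of $\{\dot{f_p} \such p \in A\}$, with all the substantive work already done in the preceding lemma. The mixing and density bookkeeping you supply is precisely what the paper leaves implicit.
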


This means for that $\bP$ collapses $\lambda$ to $\omega$ if $\lambda = \omega_1$.

\begin{corollary}\label{Maxwell_by_email}
The forcing $\bP$ is not $(\lambda,2)$-distributive.
\end{corollary}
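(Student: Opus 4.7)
The plan is to obtain the corollary directly from Theorem~\ref{cofinal_fct} by exploiting the standard translation of $(\lambda,2)$-distributivity into the statement that no new subsets of $\lambda$ are added, and then further into the statement that no new functions $\omega \to \lambda$ are added. The idea is that a $(\lambda,2)$-distributive forcing cannot even add a new cofinal map $\omega \to \lambda$, whereas Theorem~\ref{cofinal_fct} supplies one.

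Concretely, I would suppose for contradiction that $\bP$ is $(\lambda,2)$-distributive. By the fact recorded immediately after the definition of distributivity, this means that every function $\lambda \to 2$ in $V[\bP]$ already lies in $V$, equivalently that every subset of $\lambda$ in the extension is in $V$. Since $\lambda = \cf(\kappa)$ is an infinite cardinal, there is a bijection $\pi \colon \omega \times \lambda \to \lambda$ in $V$; transporting along $\pi$, no new subset of $\omega \times \lambda$ is added by $\bP$, and in particular no new function $\omega \to \lambda$ is added. But Theorem~\ref{cofinal_fct} furnishes a $\bP$-name $\dot f$ that is forced to be a cofinal function from $\omega$ into $\lambda$; by the previous sentence, $\dot f_G$ must already lie in $V$ for every generic $G$ containing a suitable condition, which produces a cofinal map $\omega \to \lambda$ in the ground model. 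This contradicts the regularity of $\lambda = \cf(\kappa)$ in $V$, which is uncountable by hypothesis.

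The only step that deserves a moment's reflection is the passage from $(\lambda,2)$-distributivity to the impossibility of adding a new function $\omega \to \lambda$. This is routine once one observes that the graph of such a function is a subset of $\omega \times \lambda$, which under the ground-model bijection $\pi$ corresponds to a subset of $\lambda$, whose characteristic function is a $\lambda \to 2$ function. There is no real obstacle here; Theorem~\ref{cofinal_fct} does all of the actual work, and the corollary is essentially a cardinal-arithmetic afterword.
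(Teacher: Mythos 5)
Your proof is correct and is essentially the paper's argument: both reduce Theorem~\ref{cofinal_fct} to a new function $\lambda \to 2$ and use the regularity of the uncountable cardinal $\lambda$ in $V$. The only cosmetic difference is that the paper codes the new cofinal map via the characteristic function of its range $B = \rge(f) \subseteq \lambda$, while you code its graph through a ground-model bijection $\omega \times \lambda \to \lambda$; both codings are immediate and equivalent.
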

\begin{proof} Let $f$ be a cofinal function from Theorem~\ref{cofinal_fct} and $B = \rge(f)$. The characteristic function of $B$ is a new function from $\lambda$ to $2$.
\end{proof}

\subsection{The Singular Perfect Case}

Now we further develop our non-distributivity results from Section~\ref{balcar-simon-sec}.

\begin{theorem}\label{sing-three-par} Let $\kappa$ be a singular cardinal of cofinality $\lambda>\omega$ and let $\bP$ be any of $\M_\kappa^\textup{}$, $\Laver_\kappa^\textup{}$, $\M_\kappa^\textup{stat}$, or $\Laver_\kappa^\textup{stat}$. Then $\bP$ is not $(\omega,\cdot,\lambda^+)$-distributive.\end{theorem}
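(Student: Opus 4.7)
The plan is to enrich the Balcar--Simon constructions of Section~\ref{balcar-simon-sec} so that the resulting $\omega$-matrix of maximal antichains witnesses not only countable non-distributivity but also failure of $(\omega,\cdot,\lambda^+)$-distributivity. For each of the four forcings I use the corresponding order from Section~\ref{balcar-simon-sec} ($\prec^\textup{vert}$ for $\Miller_\kappa$ and $\Miller^\textup{stat}_\kappa$, $\prec^\textup{horiz}$ for $\Laver_\kappa$, and $\prec^\textup{stat}$ for $\Laver^\textup{stat}_\kappa$) together with the slow fusion of Subsection~\ref{Subs4.1new}. The key enhancement is to strengthen the existence lemmas (Lemmas~\ref{existence_0}, \ref{existence}, and \ref{existence_2}) so that below any $p \in \bP$ one produces $\lambda^+$-many pairwise incompatible $\prec$-descendants, rather than a single one. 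Since $\kappa_\alpha \ge \lambda^+$ for all sufficiently large $\alpha < \lambda$, the slow fusion at a suitable stage can record $\lambda^+$-many incompatible local choices; in the $\Laver^\textup{stat}_\kappa$ case these alternatives come instead from Solovay's splitting of stationary sets as in Proposition~\ref{Jech_elementary_Solovay}.

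With this strengthened existence lemma in hand, I construct antichains $P_n$ inductively: set $P_0 = \{1_\bP\}$, and given $P_n$ let $P_{n+1}$ be a maximal antichain that places $\lambda^+$-many pairwise incompatible $\prec$-descendants of each $p \in P_n$ beneath $p$. Now suppose toward contradiction that some maximal antichain $Q$ distributes $\{P_n : n < \omega\}$ with each $q \in Q$ compatible with at most $\lambda$ members of each $P_n$. Fixing $q \in Q$, I iteratively choose $p_n \in P_n$ compatible with $q$, obtaining a descending thread $(p_n)_{n<\omega}$. The Balcar--Simon no-lower-bound analysis underlying Theorems~\ref{March_31_fruit}, \ref{5.16_balcar_simon}, and \ref{not_omega_distr_2} already precludes a common refinement, and the widening sharpens this into a direct quantitative contradiction: at each stage $n+1$ the $\lambda^+$ alternatives beneath $p_n$ must yield, under the strengthened existence lemma applied to $q \cap p_n \le q$, at least $\lambda^+$ members of $P_{n+1}$ compatible with $q$, violating the assumed bound of $\lambda$.

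The main obstacle I expect lies in the strengthened existence lemma itself: one must guarantee that an arbitrary $q \le p$ is compatible with $\lambda^+$ of the $q_\xi$'s rather than merely with one of them. A fixed-level Bernstein-style encoding fails when $q$'s stem (or already-fixed coding) has surpassed that level. To circumvent this, the plan is to interleave the slow-fusion recording levels across the $\omega$-many stages, invoking Lemma~\ref{silver_counting} to the effect that below any $q$ a club of levels $\alpha$ gives $[q \rest \alpha]$ of full cardinality $\mu_\alpha \ge \lambda^+$, and arranging the $P_n$-construction so that for every $q$ some stage's recording level lies within this club. Handling this interleaving --- and the parallel stationary-coding issue for $\Laver^\textup{stat}_\kappa$ --- is where the technical core of the proof resides.
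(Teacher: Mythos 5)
Your reduction to the Balcar--Simon matrices is the right starting point (the paper also takes a matrix $\{\cA_n \such n<\omega\}$ witnessing the failure of $\omega$-distributivity from Section~\ref{balcar-simon-sec}), but the quantitative step at the heart of your contradiction does not work, and the ``strengthened existence lemma'' you plan to prove cannot hold in the form you need. First, the $\lambda^+$-many pairwise incompatible $\prec$-descendants you obtain by applying the lemma to a common refinement of $q$ and $p_n$ are \emph{new} conditions; they are not members of the pre-chosen antichain $P_{n+1}$. Each of them is compatible with at least one member of $P_{n+1}$, but pairwise incompatible conditions can all be compatible with --- indeed all lie below --- one and the same member of a maximal antichain, so this gives no lower bound on $|\{p' \in P_{n+1} \such p' \,\|\, q\}|$. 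Second, what your count actually requires is that every $q$ compatible with $p_n$ be compatible with $\lambda^+$-many members of the \emph{fixed} family $\{q_\xi \such \xi<\lambda^+\}$ of pairwise incompatible descendants of $p_n$ placed into $P_{n+1}$. No such family can have this property: any $r \le p_n$ is compatible with some $q_\xi$, and a common refinement $r' \le r, q_\xi$ is then incompatible with every $q_\eta$ for $\eta \ne \xi$, so $r'$ meets the family in exactly one point. A hypothetical witnessing antichain $Q$ can therefore defeat any single level $n$; the genuine content of the theorem is that no $Q$ can keep its contacts down to $\le\lambda$ at \emph{all} levels simultaneously, and that requires an argument that diagonalizes across the $\omega$ stages rather than deriving a contradiction at one stage.

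For comparison, the paper's proof never widens the matrix. It proves Lemma~\ref{main-lemma}: for any $p$ and any maximal antichain $\{q_\xi \such \xi<\lambda\}$ of size $\le\lambda$ below $p$, the set of nodes $s$ with $p \rest s \le q_\xi$ for some $\xi$ is dense in $p$. Assuming toward a contradiction that $p$ is compatible with at most $\lambda$ members of each $\cA_n$, one projects to maximal antichains $\cA_n^\ast$ of size $\le\lambda$ below $p$, applies the lemma repeatedly to build a $\sqsubseteq$-increasing sequence of nodes $\seq{t_n}{n<\omega}$ with $p \rest t_{n+1}$ below a single member of $\cA_n^\ast$, and then uses $\lambda>\omega$ to form $t^\ast = \bigcup_n t_n \in p$; the cone $p \rest t^\ast$ then refines every $\cA_n$ at once, contradicting the failure of $\omega$-distributivity. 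The parameter $\lambda^+$ enters only through the hypothesis $|\{r \in \cA_n \such r\,\|\,p\}| \le \lambda$ feeding into Lemma~\ref{main-lemma}; no $\lambda^+$-fold branching of the matrix is needed. If you want to salvage your outline, the piece to prove is precisely such a ``dense decision'' lemma for $\le\lambda$-sized antichains, not a $\lambda^+$-fold existence lemma.
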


\begin{lemma}\label{main-lemma} Let $p \in \bP$ be a condition and suppose that $\mathcal{A} = \{q_\xi:\xi<\lambda\}$ is a maximal antichain below $p$. Then for all $t \in p$, there is some $s \in p \rest t$ and some $\xi<\lambda$ such that $p \rest s \le q_\xi$.\end{lemma}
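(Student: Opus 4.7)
The plan is to argue by contradiction. Suppose there is $t \in p$ such that $p \rest s \not\le q_\xi$ for every $s \in p \rest t$ and every $\xi < \lambda$. Since each $q_\xi$ is closed under initial segments, $p \rest s \not\subseteq q_\xi$ produces a node $\bar s \sqsupseteq s$ in $p$ with $\bar s \notin q_\xi$: take $\bar s = s$ if $s \notin q_\xi$, and otherwise any witness to $(p \rest s) \setminus q_\xi \neq \emptyset$ must be a proper extension of $s$. Crucially, once $\bar s \notin q_\xi$, the same initial-segment closure forces every extension of $\bar s$ in $p$ to lie outside $q_\xi$ as well.

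The goal is to convert this observation into a condition $p^* \le p \rest t$ with $p^* \perp q_\xi$ for every $\xi < \lambda$, contradicting the maximality of $\cA$. Build $p^*$ as the limit of a slow fusion sequence $\langle p_\alpha \such \alpha < \lambda \rangle$ in the sense of Subsection~\ref{Subs4.1new}, starting from $p_0 = p \rest t$. At stage $\xi$, pass to $p_{\xi+1} \le_{\xi+1,\xi} p_\xi$ so that every node of $\Split_{\xi+1}(p_{\xi+1})$ lies outside $q_\xi$: for each $s \in \Split_\xi(p_\xi)$ and each $s' \in \Succ_{p_\xi}(s)$, choose $\bar s \sqsupseteq s'$ in $p_\xi$ with $\bar s \notin q_\xi$ using the key observation, extend $\bar s$ to the $\sqsubseteq$-minimal $\kappa_{\xi+1}$-splitting node $s'' \sqsupseteq \bar s$ of $p_\xi$ (which remains outside $q_\xi$ by initial-segment closure), and define $p_{\xi+1}$ above $s'$ to consist of a single chain of $p_\xi$ from $s'$ to $s''$ together with $p_\xi \rest r$ for each $r \in \Succ_{p_\xi}(s'')$. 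At limit stages $\delta < \lambda$, apply Lemma~\ref{slow_fusion_lemma} to form $\bigcap_{\alpha < \delta} p_\alpha \in \bP$ and then Lemma~\ref{August30} to produce a suitable $p_\delta$ prolonging the fusion. Finally set $p^* = \bigcap_{\alpha < \lambda} p_\alpha \in \bP$.

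Fix any $\xi < \lambda$. Since $p^* \subseteq p_{\xi+1}$ and no node of $p_{\xi+1}$ at or above the $\Split_{\xi+1}(p_{\xi+1})$ level lies in $q_\xi$, no cofinal branch of $p^*$ can be a cofinal branch of $q_\xi$; hence $p^* \cap q_\xi$ contains no perfect subtree, so $p^*$ is incompatible with $q_\xi$. This holds for every $\xi$, contradicting the maximality of $\cA$. The main technical obstacle is verifying that the stage-$\xi$ modification genuinely respects the slow fusion relation $\le_{\xi+1,\xi}$ (and inductively the earlier relations $\le_{\xi+1,\alpha}$) uniformly for all four forcings covered by the theorem: for the Laver variants $\Split_\xi(p_\xi)$ is an entire level and the analogous modification is direct, while for the stationary variants one must additionally check that the splitting of $s''$ retains the required stationarity, which holds because the subtree $p_\xi \rest s''$ has not been altered above $s''$ at this stage.
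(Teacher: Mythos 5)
Your overall strategy -- derive a contradiction by producing, via a length-$\lambda$ fusion in which stage $\xi$ ``escapes'' $q_\xi$, a single condition incompatible with every member of $\cA$ -- is the same in spirit as the paper's, which instead proves a density claim for each $\zeta<\lambda$ and assembles nested maximal antichains of good nodes. For the Miller versions your construction works. One point you leave implicit but should state: the ``key observation'' is proved for $p$, yet at stage $\xi$ you need a node $\bar s\sqsupseteq s'$ \emph{inside $p_\xi$} avoiding $q_\xi$. This is rescued precisely by clause (4) of Definition~\ref{slow_fusion_stuff}: for $s'\in\Succ_{p_\xi}(s)$ with $s\in\Split_\xi(p_\xi)$ one has $p_\xi\rest s'=p_0\rest s'=p\rest s'$, so the hypothesis $p\rest s'\not\le q_\xi$ really does supply $\bar s\in p_\xi$. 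Without that invariant the earlier prunings could in principle have left $p_\xi\rest s'$ entirely inside $q_\xi$.

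The genuine gap is in the Laver variants, where the claim that ``the analogous modification is direct'' fails. A Laver condition has $|\Succ(t)|=\kappa_{\dom(t)}$ (resp.\ stationary $\OSucc$) at \emph{every} node above the stem, so the device of running a non-splitting chain from $s'$ up to a node $\bar s\notin q_\xi$ is unavailable: the intermediate nodes would have a single successor and the result is not a condition. The only legal move is to thin one successor level to a set of full size (resp.\ still stationary), and since for Laver $\Split_{\xi+1}(p_{\xi+1})$ \emph{is} the level $\dom(\Stem)+\xi+1$, arranging $\Split_{\xi+1}(p_{\xi+1})\cap q_\xi=\emptyset$ would require $\kappa_{\dom(s)}$-many (resp.\ stationarily many) successors of each level-$\xi$ node $s$ lying outside $q_\xi$. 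The contradiction hypothesis gives no such thing: it only guarantees that \emph{some} node above each $s'$, possibly at a much higher level, is missing from $q_\xi$, and it is perfectly consistent that $\Succ_{q_\xi}(s)=\Succ_{p}(s)$ for every $s$ on the first many levels, in which case no thinning of level $\xi+1$ can avoid $q_\xi$. (Indeed, the relation $\le_{\xi+1,\xi}$ even freezes the successors of $\Split_\xi$-nodes, i.e.\ the whole level $\xi+1$, outright.) So for $\Laver_\kappa$ and $\Laver_\kappa^\textup{stat}$ you need a different mechanism -- e.g.\ first locating, densely, single nodes $u$ none of whose extensions lie in $q_\xi$ (your key observation does give these), and then building the refinement from antichains of such nodes in a way that retains full splitting at all intermediate nodes and is met by every cofinal branch -- rather than a one-level-per-stage slow fusion.
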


\begin{proof} Fix such a $p \in \bP$ and $\mathcal{A}$.

\textbf{Claim:} Fix $\zeta<\lambda$. For all $\bar t$, there is some $t \in p \rest \bar t$ such that one of the following holds:

\begin{enumerate}[(i)]
\item $\forall \xi < \zeta$, $(p \rest t) \cap q_\xi \subseteq t$,
\item $\exists \xi<\zeta$, $p \rest t \le q_\xi$.
\end{enumerate}

Suppose that the claim is false. The negation of this statement is that there is some $\bar t \in p$ such that $\forall t \in p \rest \bar t$, we have both of the following:

\begin{enumerate}[(i')]
\item $\exists \xi < \zeta$, $s \in p \rest t$, $s>t$ and $s \in (p \rest t) \cap q_\xi$,
\item $\forall \xi < \zeta$, $\exists s \in p \rest t$, $s>t$ and $s \in (p \rest t) \setminus q_\xi$.
\end{enumerate}

This implies that for all $\xi < \zeta$, $t \in (p \rest \bar t) \cap q_\xi$, there is some $\eta<\zeta$, $\eta \ne \xi$ and some $ s \in (p \rest t) \cap q_\eta$; this is because we can first take $s \in (p \rest t) \setminus q_\xi$ using (ii') and then $s' \in (p \rest s) \cap q_\eta$ for some $\eta<\zeta$ using (i'), from which it will follow that $\eta \ne \xi$. It then follows that we can find a $\sqsubseteq$-increasing sequence of nodes $\seq{ s_i}{i<\zeta} \subset p$ such that for all $i<\zeta$, $s \in q_{\xi(i)}$ for $\xi(i)<\zeta$ and yet $s_i \notin q_{\xi(j)}$ for $j<i$. If $s = \bigcup_{i<\zeta} s_i$, then $s \in p$. Then for all $t \sqsupseteq s$ with $ t \in p \rest s$ and for all $\xi<\zeta$, we have $t \notin q_\xi$. Hence the claim holds as witnessed by $s$ via \emph{(i)}.

Now suppose that the statement of the lemma is false: $\exists \bar t \in p$, $\forall t \in p \rest \bar t$, $\xi<\kappa$, $p \rest t \not\le q_\xi$.

Suppose that $\bP$ is one of the Miller versions. It follows from the claim that for all $t \in p \rest \bar t$, $\zeta < \lambda$, there is some $s \in p \rest t$ such that $\forall \xi < \zeta$, $(p \rest s) \cap q_\xi \supseteq t$. We can therefore build a sequence $\seq{A_\zeta}{\zeta<\lambda}$ of maximal antichains in $p \rest \bar t$ such that the following hold:

\begin{enumerate}
\item $t \in A_\zeta$, $t$ is a splitting node,
\item for all $\xi<\zeta$, $(p \rest t) \cap q_\xi \subseteq t$,
\item for all $\zeta < \zeta'<\lambda$, $t \in A_\zeta$, $t' \in A_{\zeta'}$, $t \sqsubseteq t'$ or $t \perp t'$.
\end{enumerate}

Then let $r$ be the downwards closure of $\bigcup_{\zeta<\kappa}A_\zeta$. Then $r$ is a condition in $\bP$ both in the Miller and Laver cases. Moreover, for all $\xi<\lambda$, $r \perp q_\xi$, and so $\mathcal A$ is not maximal after all.\end{proof}

Here is the proof of Theorem~\ref{sing-three-par}.

\begin{proof} Let $\{\mathcal{A}_n \such n \in \omega \}$ be a matrix that witnesses the failure of $\omega$-distributivity. Fix $p \in \bP$ and suppose for contradiction that for all $n<\omega$, $|\{r \in \mathcal{A}_n : r \| p \}| \le \lambda$.

Given $r \in \mathcal{A}_n$ such that $r \| p$, let

\vspace{-3mm}

$$\pi(r) = \{t \in p \cap r : (p \cap r) \rest t \text{ contains a perfect subtree}\}.$$

\noindent Let $\mathcal{A}_n^* = \{\pi(r) : r \in \mathcal{A}_n, r \| p\}$. Then $\mathcal{A}_n^\ast$ is a maximal antichain of size $\le \lambda$ below $p$, and the downwards closures of the $\mathcal{A}_n^\ast$'s have empty intersection.

Now define an increasing sequence $\seq{t_n}{n<\omega}$ of nodes through $p$ as follows: Let $t_0$ be the root, and if $t_n$ is defined apply Lemma~\ref{main-lemma} to $t_n$ and $\mathcal{A}_n^\ast$. Let $t^\ast = \bigcup_{n<\omega}t_n$. Then for all $n<\omega$, there is a unique $q_n \in \mathcal{A}_n^\ast$ such that $p \rest t^\ast \le q_n$. But this is impossible, contradiction.\end{proof}

\subsection{Collapses for Naive Higher Sacks Forcings}

In this section we use failures of three parameter distributivity to obtain collapses for perfect tree forcings for regular cardinals.


We define a basic tree forcing that consists merely of perfect binary trees in a regular cardinal $\kappa$. This will suffice to demonstrate an isomorphism to the L{\'e}vy collapse, but the reader can observe that the result can be generalized to other forms of splitting behavior.

\begin{definition} Let $\kappa$ be a regular cardinal. Then $\bT$ will consist of conditions $p \subset 2^{<\kappa}$ such that:

\begin{enumerate}
\item $\forall t \in p$, $t \in 2^{<\alpha}$, $\alpha<\kappa$,
\item $\forall t \in p$ and $s \sqsubseteq t$, then $s \in p$,
\item $\forall t \in p$, $\exists s \sqsupseteq t$, $s {}^\frown 0, s {}^\frown 1 \in p$,
\item if $\alpha<\kappa$ is a limit, $s \in 2^\alpha$, and for all $\beta<\alpha$, $s \rest \beta \in p$, then $s \in p$.

\end{enumerate}\end{definition}

So there is no restriction on splitting as there is in Kanamori's version. The main theorem of this subsection is the following:


\begin{theorem}\label{naive-sacks} If $2^\kappa = \kappa^+$, then $\bT \simeq \Levy(\omega,\kappa^+)$.\end{theorem}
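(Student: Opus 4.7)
The plan is to invoke a classical characterization of the L\'evy collapse: under $2^\kappa = \kappa^+$, the algebra $\ro(\Levy(\omega,\kappa^+))$ is (up to isomorphism) the unique atomless weakly homogeneous complete Boolean algebra of density $\kappa^+$ that satisfies the $(\kappa^+)$-chain condition and makes every ordinal below $\kappa^+$ countable. It thus suffices to verify each of these four properties of $\bT$ and appeal to the characterization.

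Density, $(\kappa^+)$-cc, and weak homogeneity are the easier items. Pruning to a single splitting extension at each level shows that every $p \in \bT$ has a refinement $q \le p$ with $|q| \le \kappa$, and under $2^\kappa = \kappa^+$ there are at most $(\kappa^+)^\kappa = \kappa^+$ such ``small'' trees, so the density is exactly $\kappa^+$. The $(\kappa^+)$-chain condition follows by a $\Delta$-system argument applied to the initial splitting nodes of a putative antichain of size $\kappa^+$, exploiting that $|2^{<\kappa}| \le 2^\kappa = \kappa^+$. Weak homogeneity is immediate: for each $p \in \bT$ the map sending $t \in 2^{<\kappa}$ to the $t$-th splitting node of $p$ induces an order-isomorphism $\bT \cong \bT \rest p$.

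The main obstacle is showing that $\bT$ collapses $\kappa$ to $\omega$. First I would apply the Balcar--Simon vertical trick (Definition~\ref{vertical} and Lemma~\ref{existence_0}), which adapts verbatim to $\bT$ since $\bT$ is a Miller-style forcing with width $2$ lacking Kanamori's vertical closure; iterating $\omega$ times as in Theorem~\ref{March_31_fruit} shows $\bT$ is not $\omega$-distributive and in particular adds a new function $\omega \to 2$. To amplify this to a surjection $\omega \to \kappa^+$ I would adapt the Bernstein-set method of Section~\ref{bernstein-sec}: under $2^\kappa = \kappa^+$ one enumerates the $\kappa^+$-many perfect subtrees of $2^{<\kappa}$ and at each of $\omega$ successive ``coding'' levels partitions the relevant strata of cofinal branches into disjoint Bernstein sets indexed by $\kappa^+$, so that the generic branch selects among $\kappa^+$ indices at each coding level. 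Combining these selections into a single $\bT$-name $\dot f$ yields a function whose values are forced to exhaust $\kappa^+$ below every condition by a density argument mirroring Theorem~\ref{lambda_onto_kappa}. With all four properties verified, the characterization theorem delivers $\ro(\bT) \cong \ro(\Levy(\omega,\kappa^+))$.
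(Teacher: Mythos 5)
Your proposal has genuine gaps on both sides of the argument. First, the characterization of the L\'evy collapse you invoke is not the right one, and two of the four properties you verify for $\bT$ are false. $\Levy(\omega,\kappa^+)$ is \emph{not} $(\kappa^+)$-cc: the conditions $\{(0,\alpha)\}$ for $\alpha<\kappa^+$ already form an antichain of size $\kappa^+$; and it collapses $\kappa^+$ itself to $\omega$, not merely the ordinals below it. Correspondingly, your $\Delta$-system argument for the $(\kappa^+)$-cc of $\bT$ cannot succeed: there are up to $2^{<\kappa}$-many stems, two conditions with the same stem need not be compatible, and in fact the paper proves the opposite statement --- the $2^\kappa$-chain condition fails \emph{everywhere} in $\bT$ (witnessed by the trees $T_f$ determined by functions $f$ on a fixed set of levels). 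This everywhere-failure is not incidental; it is needed, because the correct folklore characterization (the paper's Fact~\ref{relevant-theory}) is that any separative poset of size $\nu$ that collapses $\nu$ to $\omega$ is equivalent to $\Levy(\omega,\nu)$, and a $(\kappa^+)$-cc forcing could never collapse $\kappa^+$.

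Second, the Bernstein amplification does not go through. For $\alpha<\kappa$ the tree $T\nrest\alpha$ has at most $2^{|\alpha|}$ cofinal branches, and $2^{|\alpha|}$ may be strictly less than $\kappa^+$ for every $\alpha<\kappa$ (for instance if $\GCH$ holds below $\kappa$), so no level admits $\kappa^+$ pairwise disjoint Bernstein sets, and no name for a surjection of $\omega$ onto $\kappa^+$ arises this way. The paper's route is different: it combines the vertical Balcar--Simon trick (which you correctly identify as the engine for non-$\omega$-distributivity) with the everywhere-failure of the $2^\kappa$-cc to build a matrix of maximal antichains of size $\kappa^+$ that cannot be thinned below any condition, i.e., a failure of $(\omega,\cdot,\kappa^+)$-distributivity in the sense of Theorem~\ref{sing-three-par}; Fact~\ref{relevant-theory} then converts this --- via the name $\dot c$ singularizing $\kappa^+$ to cofinality $\omega$ and Sakai's theorem that a $\nu$-sized poset singularizing $\nu$ already collapses it to $\omega$ --- into the collapse of $\kappa^+$ and the equivalence with $\Levy(\omega,\kappa^+)$. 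The missing idea in your write-up is precisely this passage from non-distributivity to the collapse of $\kappa^+$, which cannot be achieved by coding at bounded levels of the tree.
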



We adapt the methods from the previous subsection and combine them with the theory amalgamated in the following:

\begin{fact}[From the Prague School]\label{relevant-theory} Suppose that $\bQ$ is a poset of cardinality $\nu$ where $\nu$ is regular and that $\bQ$ is not $(\omega,\cdot,\nu)$-distributive. Then $\Vdash_\bQ ``|\nu|=\omega$''. Moreover, $\bQ$ is forcing-equivalent to $\Levy(\omega,\nu)$.\end{fact}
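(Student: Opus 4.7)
The plan is to prove this in two stages. First, I would derive that $\bQ$ collapses $\nu$ to $\omega$ from the failure of $(\omega,\cdot,\nu)$-distributivity; second, invoke the classical Prague-school characterization of $\ro(\Levy(\omega,\nu))$ as essentially the unique complete Boolean algebra of density at most $\nu$ that collapses $\nu$ to $\omega$.

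For the first stage, I would unpack the failure directly. Failure of $(\omega,\cdot,\nu)$-distributivity supplies an $\omega$-sequence $\langle\mathcal{A}_n \such n<\omega\rangle$ of maximal antichains of $\bQ^+$ such that the set $D=\{q\in\bQ^+ \such \forall n<\omega,\, |\{a\in\mathcal{A}_n \such a\text{ compatible with } q\}|<\nu\}$ is not dense; pick $p\in\bQ$ with no extension in $D$. Since $|\bQ|=\nu$ bounds antichain size, and the failure forces the relevant $\mathcal{A}_n$'s to reach size $\nu$, after refining and passing to $\bQ\rest p$ we may enumerate each such $\mathcal{A}_n$ as $\{a_{n,\alpha}\such\alpha<\nu\}$. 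Define the $\bQ$-name $\dot h$ by $\dot h(n)=\alpha$ iff $a_{n,\alpha}\in\dot G$. The key observation is that no $q\le p$ and $\beta<\nu$ can satisfy $q\Vdash\rge(\dot h)\subseteq\check\beta$: such a $q$ would be incompatible with every $a_{n,\alpha}$ for $\alpha\ge\beta$, leaving at most $\beta<\nu$ compatible elements of each $\mathcal{A}_n$ and thus placing $q$ in $D$, contrary to the choice of $p$. Hence $p\Vdash\rge(\dot h)$ is cofinal in $\check\nu$, and since $\nu$ is regular in $V$, $p\Vdash|\check\nu|=\omega$. To upgrade to $\Vdash_\bQ|\check\nu|=\omega$ one either restricts to $\bQ\rest p$ (as is standard when applying the Fact) or uses that in the intended homogeneous Boolean-algebraic setting the failure of distributivity propagates below every condition.

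For the second stage, once the collapse is in hand, the remaining claim is $\ro(\bQ)\cong\ro(\Levy(\omega,\nu))$. Both complete Boolean algebras have density at most $\nu$ (for $\bQ$ because $|\bQ|=\nu$; for $\Levy(\omega,\nu)$ the underlying poset already has size $\nu$). The classical theorem, originating in the Prague school, asserts that a complete atomless Boolean algebra of density at most $\nu$ all of whose nonzero elements force $|\check\nu|=\omega$ is isomorphic to $\ro(\Levy(\omega,\nu))$. The proof builds a dense embedding using the name $\dot h$ from Stage 1: a finite partial function $s\in\Levy(\omega,\nu)$ is sent to the meet $\bigwedge_{n\in\dom(s)}\|\dot h(n)=s(n)\|$ in $\ro(\bQ)$, and density of the image is verified by a back-and-forth enumeration of dense subsets of size $\nu$ on both sides, matching maximal antichains of $\ro(\bQ)$ against the canonical antichains of $\Levy(\omega,\nu)$. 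The main obstacle---and the heart of the Fact---is this density verification: it is where the hypothesis $|\bQ|=\nu$ is essential, ensuring that the collapsing name $\dot h$ is rich enough to generate a dense subalgebra of $\ro(\bQ)$ rather than merely a proper subalgebra, and where one must appeal to homogeneity (or restriction to a suitable witness $p$) to secure the isomorphism globally.
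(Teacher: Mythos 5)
Your overall architecture matches the paper's: extract a name for an $\omega$-sequence from the witnessing matrix, show it cannot be bounded below any relevant condition, and then quote the classical fact that a poset of size $\nu$ collapsing $\nu$ to $\omega$ is forcing-equivalent to $\Levy(\omega,\nu)$. The argument that no $q\le p$ can force $\rge(\dot h)\subseteq\check\beta$ is correct and is essentially the paper's use of the name $\dot c$.

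There is, however, a genuine gap at the pivot of Stage 1: the inference ``$p\Vdash\rge(\dot h)$ is cofinal in $\check\nu$, and since $\nu$ is regular in $V$, $p\Vdash|\check\nu|=\omega$.'' Adding a cofinal $\omega$-sequence into $\nu$ only yields $\cf(\nu)=\omega$ in the extension, i.e.\ that $\nu$ is singularized; it does not by itself yield $|\nu|=\omega$. Namba forcing is the standard counterexample to the general implication: it adds a cofinal $\omega$-sequence into $\omega_2$ while (under \CH) preserving $\omega_1$, so $\omega_2$ acquires cardinality $\omega_1$, not $\omega$. The implication you need is the nontrivial extra step that a poset of \emph{size} $\nu$ which singularizes the regular cardinal $\nu$ to cofinality $\omega$ must in fact collapse $\nu$ to $\omega$; the paper isolates exactly this and attributes it to an argument of Sakai. (For $\nu=\omega_1$ the step is vacuous, but the Fact is applied in the paper with $\nu=\kappa^+\ge\omega_2$, so the gap is live.) Your Stage 2 is fine modulo this: once the collapse to $\omega$ is actually secured, the dense-embedding/back-and-forth argument you describe is the standard proof of the folklore equivalence with $\Levy(\omega,\nu)$, which the paper simply cites. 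Your remark about restricting to $\bQ\rest p$ or invoking homogeneity to pass from a local to a global statement is a reasonable reading of how the Fact is meant to be applied.
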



\begin{proof}[Sketch] We include an outline of the concepts at play. It is well-known that any forcing of cardinality $\nu$ that collapses $\nu$ to have countable cofinality is forcing-equivalent to $\Levy(\omega,\nu)$ \cite{Jech3}. This gives us the last part of the statement.

Now we want to justify the assertion that $\nu$ will be collapsed under these circumstances. First of all, it is enough to show that $\Vdash_\bQ ``\cf(\nu)=\omega$'' because an argument of Sakai tells us that any $\nu$-sized poset the singularizes $\nu$ to have cofinality $\omega$ will actually collapse $\nu$ to be countable \cite{Sakai_semiproper}. Now suppose $\mathcal{A}_n,n<\omega$, witness the failure of $(\omega,\cdot,\nu)$-distributivity. Then one can observe that if we have enumerations $\mathcal{A}_n=\langle q_\xi^n : \xi < \nu \rangle$ for all $n<\omega$, then
\[
 \dot c := \{  \langle \check{(n,\xi)},q^n_\xi \rangle \such n < \omega, \xi < \nu \}
 \]
\noindent is a name for a function singularizing $\nu$ to have cofinality $\omega$.\end{proof}


\begin{proposition} $|\bT| = 2^\kappa$.\end{proposition}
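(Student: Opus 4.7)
The plan is to prove $|\bT| = 2^\kappa$ by establishing matching lower and upper bounds.

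For the lower bound, I will exhibit $2^\kappa$ pairwise distinct conditions via an injection $2^{I} \hookrightarrow \bT$. Fix a partition $\kappa = I \sqcup J$ with $|I| = |J| = \kappa$ and $J$ cofinal in $\kappa$. For each $f : I \to 2$ set
\[
p_f := \{\, s \in 2^{<\kappa} \such (\forall \alpha \in \dom(s) \cap I)\; s(\alpha) = f(\alpha) \,\}.
\]
I will check $p_f \in \bT$: conditions (2) and (4) are immediate because the defining property is coordinate-wise; for (3), given $t \in p_f$ I pick $\alpha \in J$ with $\alpha \geq |t|$, extend $t$ to a node $s \in p_f$ of length $\alpha$ by copying $f$ on $I \cap [|t|,\alpha)$ and filling $J \cap [|t|,\alpha)$ arbitrarily, and then both $s {}^\frown 0$ and $s {}^\frown 1$ lie in $p_f$ since $\alpha \in J$. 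The map $f \mapsto p_f$ is injective: if $f(\alpha_0) \neq g(\alpha_0)$ for some $\alpha_0 \in I$, then any $s \in p_f$ of length $\alpha_0 + 1$ has $s(\alpha_0) = f(\alpha_0) \neq g(\alpha_0)$, so $s \notin p_g$. Hence $|\bT| \geq 2^{|I|} = 2^\kappa$.

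For the upper bound, I observe that each $p \in \bT$ is a subset of $2^{<\kappa}$, so the crude count gives $|\bT| \leq 2^{|2^{<\kappa}|}$. A structurally sharper encoding: each $p$ is uniquely determined by its canonical splitting embedding $\phi_p : 2^{<\kappa} \to 2^{<\kappa}$ defined by recursion via $\phi_p(\emptyset) = \Stem(p)$ and $\phi_p(\sigma {}^\frown i) = $ the $\sqsubseteq$-least splitting node of $p$ above $\phi_p(\sigma) {}^\frown i$, with the limit values of $p$ itself then forced by closure condition (4); the number of such functions is $|2^{<\kappa}|^{|2^{<\kappa}|}$. Under the working cardinal arithmetic of this section (the proposition is used inside Theorem~\ref{naive-sacks} where $2^\kappa = \kappa^+$ holds, together with the natural accompanying condition $2^{<\kappa} = \kappa$ on $\kappa$), this bound collapses to $\kappa^\kappa = 2^\kappa$.

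The main delicacy is the cardinal-arithmetic handling of the upper bound: absent an assumption of the form $2^{<\kappa} = \kappa$, the bare powerset estimate $2^{|2^{<\kappa}|}$ can strictly exceed $2^\kappa$. In the ambient setting of Theorem~\ref{naive-sacks} this reduction is available — indeed it is already implicit in the compatibility of the equivalence $\bT \simeq \Levy(\omega,\kappa^+)$ with Fact~\ref{relevant-theory}, which requires $|\bT| = \kappa^+$ — so the proposition slots cleanly into the surrounding argument.
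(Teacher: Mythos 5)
The paper states this proposition without proof, so I am judging your argument on its own terms. Your lower bound is correct and is essentially the construction the authors themselves use in the very next lemma (the failure of the $2^\kappa$-chain condition): one fixes a cofinal set of ``free'' coordinates and a complementary set of size $\kappa$ on which the tree is required to copy a prescribed $f$, and distinct $f$'s give distinct conditions. Your verification of conditions (1)--(4) for $p_f$ and of injectivity is fine.

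The upper bound is where you have correctly put your finger on a real issue: the inequality $|\bT|\le 2^\kappa$ is \emph{not} provable from the stated hypotheses alone, and some assumption like $2^{<\kappa}=\kappa$ is genuinely needed, not just convenient. Indeed, if $\kappa=\omega_1$ and $2^{\aleph_0}=2^{\aleph_1}=\aleph_2<2^{\aleph_2}$, then taking the full tree up to level $\omega$ and, above each of the $2^{\aleph_0}$ nodes of length $\omega$, independently choosing one of two fixed perfect continuations produces $2^{2^{\aleph_0}}=2^{2^{<\kappa}}>2^\kappa$ distinct conditions; so the proposition as literally stated fails there, and your added hypothesis $2^{<\kappa}=\kappa$ (left implicit by the paper, and not a consequence of $2^\kappa=\kappa^+$) is the right repair. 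Two small remarks on your execution. First, once $2^{<\kappa}=\kappa$ is granted, the ``crude'' estimate $|\bT|\le 2^{|2^{<\kappa}|}=2^\kappa$ already finishes the proof, and your splitting-embedding encoding buys nothing: the number of functions $2^{<\kappa}\to 2^{<\kappa}$ is $(2^{<\kappa})^{2^{<\kappa}}=2^{2^{<\kappa}}$, the same cardinal as the number of subsets of $2^{<\kappa}$. Second, that encoding as written has a wrinkle at limit lengths: since $\bT$ imposes no vertical requirement, the union $\bigcup_{\alpha<\lambda}\phi_p(\sigma\rest\alpha)$ need not be a splitting node, so $\phi_p(\sigma)$ for $\sigma$ of limit length must be defined as the least splitting node above that union (as in the paper's Lemma on perfect subtrees) rather than being ``forced by condition (4).'' Neither point affects correctness of your conclusion, since the crude bound suffices.
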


\begin{lemma} The $2^\kappa$-chain condition fails everywhere in $\bT$.\end{lemma}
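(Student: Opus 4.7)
Fix an arbitrary condition $p \in \bT$; I will exhibit an antichain of size $2^\kappa$ below $p$, which yields the lemma. The strategy is a direct analog of the branch-counting argument in Lemma~\ref{perfect_equivalence}: I first realize a copy of the full binary tree $2^{<\kappa}$ inside $p$ via splitting nodes, and then use it to code $2^\kappa$ many distinct perfect subtrees whose branch sets are pairwise disjoint.

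First I build an embedding $\bar\varphi \colon 2^{<\kappa} \to p$ into the splitting nodes of $p$ that preserves $\sqsubseteq$ and the $0/1$-directions, by recursion on $|s|$. Let $\bar\varphi(\emptyset)$ be any splitting node of $p$. At the successor step, given that $\bar\varphi(s)$ is a splitting node with the two successors $u_0,u_1$ in $p$, use perfectness to choose splitting nodes $\bar\varphi(s\concat\langle 0\rangle) \sqsupseteq u_0$ and $\bar\varphi(s\concat\langle 1\rangle)\sqsupseteq u_1$. At a limit $\alpha$ let $\bar\varphi(s) := \bigcup_{\beta<\alpha}\bar\varphi(s\rest\beta)$, which belongs to $p$ by clause~(4) of the definition of $\bT$, and then extend it to a splitting node of $p$. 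Next, fix a partition $\kappa = A \sqcup B$ with $|A|=|B|=\kappa$ and both cofinal in $\kappa$, and for each $f \in {}^A 2$ set
\[
T_f \;:=\; \{\,s \in 2^{<\kappa} \such s\rest (A\cap\dom(s)) = f\rest (A\cap\dom(s))\,\}.
\]
Then $T_f$ is a perfect binary tree whose splitting nodes are precisely those $s \in T_f$ with $\dom(s) \in B$, and its branch set $C_f = \{h \in 2^\kappa \such h\rest A = f\}$; the family $\{C_f \such f \in {}^A 2\}$ partitions $2^\kappa$ into $2^\kappa$ pairwise disjoint sets.

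Define $p_f := \{r \in p \such \exists s \in T_f,\ r \sqsubseteq \bar\varphi(s)\}$. I would verify that $p_f \in \bT$: downward closure is immediate; for closure at limits, if every $r\rest\beta$ lies below some $\bar\varphi(s_\beta)$ with $s_\beta \in T_f$, then $s := \bigcup_\beta s_\beta \in T_f$ and $r \sqsubseteq \bigcup_\beta \bar\varphi(s_\beta) \sqsubseteq \bar\varphi(s)$; for the splitting clause, given $r \in p_f$, pick $s \in T_f$ with $r \sqsubseteq \bar\varphi(s)$ and $\dom(s) \in B$ (possible since $B$ is cofinal), so $\bar\varphi(s)$ is splitting and both its successors remain in $p_f$ because $s\concat\langle 0\rangle,s\concat\langle 1\rangle \in T_f$.

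For the incompatibility, note that $[p_f] = \{\tilde\varphi(h) \such h \in C_f\}$ where $\tilde\varphi(h) := \bigcup_{\alpha<\kappa}\bar\varphi(h\rest\alpha)$, and $\tilde\varphi$ is injective because $\bar\varphi$ is. Since the $C_f$'s are pairwise disjoint, so are the $[p_f]$'s, and therefore $[p_f \cap p_{f'}] \subseteq [p_f]\cap[p_{f'}] = \emptyset$ for $f \ne f'$. A perfect subtree would possess cofinal branches, so $p_f \cap p_{f'}$ contains no perfect subtree and thus $p_f \perp p_{f'}$. This yields $|{}^A 2| = 2^\kappa$ pairwise incompatible conditions below $p$. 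The main obstacle is the limit step of the splitting-node recursion in Step~1 and the verification that $p_f$ is perfect: both rely delicately on property~(4) of $\bT$, which must be invoked simultaneously for $p$ and for the coded structure inherited from $T_f$.
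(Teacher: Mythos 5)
Your proof is correct and follows essentially the same route as the paper's: both code the $2^\kappa$ functions on a co-cofinal set of levels ($A$ for you, the set $E$ of limits and even successors-of-limits in the paper) into trees that are forced to agree with the coded function there and split freely elsewhere, and conclude incompatibility because two such trees eventually share no long nodes (equivalently, have disjoint branch sets). The only difference is that you explicitly carry out the relativization below an arbitrary $p$ via the embedding $\bar\varphi$ of $2^{<\kappa}$ into the splitting nodes of $p$, which the paper leaves as the remark ``this construction can also be done below any condition.''
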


\begin{proof} Let $E$ be the set of ordinals $\alpha<\kappa$ such that either $\alpha$ is a limit or $\alpha = \beta + n$ where $\beta$ is a limit and $n<\omega$ is even. Consider the set functions $f: E \to 2$, and let $T_f$ for such an $f$ consist of all $t: \gamma \to 2$ where $\gamma < \kappa$ and such that $t \rest E = f \rest \gamma$. If $f, g : E \to 2$, $f \ne g$, and $\alpha \in E$ is such that $f(\alpha) = g(\alpha)$, then $T_f$ and $T_g$ do not share any nodes of length $>\alpha$, so $T_f$ and $T_g$ are compatible. This construction can also be done below any condition.\end{proof}

Now we prove Theorem~\ref{naive-sacks}.

\begin{proof} By Fact~\ref{relevant-theory}, it is enough to show that $\bT$ fails to be $(\omega,\cdot,\kappa^+)$-distributive. 

We employ the vertical Balcar--Simon trick. For every $p \in \bT$, there is some $q \in \bT$ with $q \prec^\textup{vert} p$. In this way define a sequence of $2^\kappa = \kappa^+$-sized maximal antichains $\seq{\mathcal{A}_n}{n<\omega}$
such that for any $n$ for any $p \in \cA_n$ for any $q \leq p$, $q \in \cA_{n+1}$ we have $q \prec^\textup{vert} p$.
 If $\seq{p_n}{n<\omega}$ is $\prec^\textup{vert}$-decreasing and $q \le p_n$ for all $n<\omega$, then $q$ has no splitting.
   The fact that the sequence $\seq{\mathcal{A}_n}{n<\omega}$ witnesses the failure of $(\omega,\cdot,\kappa^+)$-distributivity is exactly analogous to the argument for Lemma~\ref{main-lemma}.\end{proof}

\section{Further Directions}
\label{S6}

Many issues remain in the uncountable case in terms of the non-closed versions, and some of these issues may be independent of $\textsf{\textup{ZFC}}$.

\begin{problem} What are the complete sub-forcings 
of those forcings we have studied here?
\end{problem}

It is possible that given the different behavior in terms of regularity between the Miller and Laver cases (see \ref{extensions}), additional assumptions about diamond principles or ladder systems may be necessary to settle these questions.

\begin{problem} To what extent do these forcings collapse cardinals?\end{problem}

Note that certain collapses of successors of singulars involve $\textup{\textsf{PCF}}$ theory and large cardinals \cite{Cummings-collapses}.

\end{document}